\DeclareMathOperator*{\IND}{IND}
\DeclareMathOperator*{\gap}{gap}
\DeclareMathOperator*{\ext}{ext}
\begin{document}

\title{Complete Minors, Independent Sets, and Chordal Graphs}
\date{\today}
\author{
J\'{o}zsef Balogh \\ University of Illinois \\ University of California, San Diego \\ jobal@math.uiuc.edu
  \thanks{This material is based upon work supported by NSF CAREER Grant DMS-0745185 and DMS-0600303, UIUC Campus Research Board Grants 09072 and 08086, and OTKA Grant K76099.}
\and John Lenz \\ University of Illinois \\ jlenz2@math.uiuc.edu
   \thanks{Work supported by 2008 REGS Program of the University of Illinois and the National Science Foundation through a  
           fellowship funded by the grant "EMSW21-MCTP: Research Experience for  
           Graduate Students" (NSF DMS 08-38434).} 
\and Hehui Wu \\ University of Illinois \\ hehui2@math.uiuc.edu
   \thanks{Work supported by the National Science Foundation through a  
           fellowship funded by the grant "EMSW21-MCTP: Research Experience for  
           Graduate Students" (NSF DMS 08-38434).} 
}

\maketitle

\newcommand{\logbound}[1]{\log_\tau(\tau #1 /2)}

\begin{abstract}
  The Hadwiger number $h(G)$ of a graph $G$ is the maximum size of a complete minor of $G$.  Hadwiger's Conjecture states that
  $h(G) \geq \chi(G)$.  Since $\chi(G) \alpha(G) \geq \left| V(G) \right|$, Hadwiger's Conjecture implies that
  $\alpha(G) h(G) \geq \left| V(G) \right|$.  We show that $(2 \alpha(G) - \left\lceil \logbound{\alpha(G)} \right\rceil) h(G) \geq \left| V(G) \right|$
  where $\tau \approx 6.83$.
  For graphs with $\alpha(G) \geq 14$, this improves on a recent result of Kawarabayashi and Song who showed $(2 \alpha(G) - 2) h(G) \geq \left| V(G) \right|$ when $\alpha(G) \geq 3$.
\end{abstract}


\newcommand{\newdef}[1]{\theoremstyle{definition} \newtheorem*{#1}{Definition} \theoremstyle{plain} }

\section{Introduction}

\newtheorem*{hadconj}{Conjecture}
\newtheorem*{woodconj}{Conjecture}
\newtheorem{thmctr}{Theorem}
\newtheorem{mainlogthm}[thmctr]{Theorem}
\newtheorem{alpha5thm}[thmctr]{Theorem}
\newtheorem{perfectweight}[thmctr]{Theorem}
\newtheorem{alpha2a2thm}[thmctr]{Theorem}

Hadwiger's Conjecture \cite{hadwiger43} from 1943 states the following (see \cite{toft96} for a survey):
\begin{hadconj}
  For every $k$-chromatic graph $G$, $K_k$ is a minor of $G$.
\end{hadconj}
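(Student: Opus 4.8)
The statement displayed just above is Hadwiger's Conjecture itself, open since 1943 and generally regarded as one of the central unsolved problems of graph theory; so in place of a proof I can only describe the partial results I would assemble and why the general case resists them. The small cases are classical: for $k \le 3$ a graph with no $K_3$-minor is a forest and hence $2$-colorable; for $k = 4$ one uses that graphs with no $K_4$-minor are built from single edges by repeated $2$-sums, so always have a vertex of degree at most $2$ and are $3$-colorable (Hadwiger, Dirac); for $k = 5$ Wagner's structure theorem shows the conjecture is \emph{equivalent} to the Four Color Theorem; and for $k = 6$ Robertson, Seymour and Thomas deduced it from the Four Color Theorem by a far harder structural argument. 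Thus the first open instance is $k = 7$, and any proof of the full conjecture must in particular contain a proof of the Four Color Theorem.

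For the general statement the natural plan is induction on $|V(G)|$: assume $G$ is vertex-critical with $\chi(G) = k$, so that $\delta(G) \ge k - 1$, and try to exhibit a $K_k$-minor. The approach that succeeds for \emph{all} $k$, at the cost of a $\sqrt{\log k}$ factor, routes through average degree: by theorems of Mader, Kostochka and Thomason, a graph with average degree at least $c\,t\sqrt{\log t}$ has a $K_t$-minor, and since $G$ has a subgraph of minimum degree $\ge k-1$ this produces a $K_t$-minor with $t = \Theta(k/\sqrt{\log k})$. The $\sqrt{\log t}$ factor in the Kostochka--Thomason bound is known to be sharp, so this method cannot yield more than $h(G) = \Omega(\chi(G)/\sqrt{\log\chi(G)})$; to close the gap one would have to use that $G$ is genuinely $k$-chromatic rather than merely $(k-1)$-degenerate.

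A second route --- the one this paper develops quantitatively --- goes through the independence number: since $\chi(G)\,\alpha(G) \ge |V(G)|$, it would suffice to prove $h(G) \ge |V(G)|/\alpha(G)$. Duchet and Meyniel proved the weaker inequality $h(G)\,(2\alpha(G)-1) \ge |V(G)|$ by a packing argument --- greedily build vertex-disjoint connected subgraphs that are pairwise adjacent and whose union is a dominating set, then bound, via domination, how large an independent set can be in terms of the number of such subgraphs --- and Kawarabayashi--Song, and then the present paper, shave the leading constant below $2$ by analysing how the branch sets meet a maximum independent set. But this style of packing argument seems intrinsically limited to a factor-of-two loss, which is why successive improvements only chip away at the constant; reaching $h(G) \ge |V(G)|/\alpha(G)$ --- which would give Hadwiger's Conjecture in full --- would require a genuinely new idea, in particular already for $\alpha(G) = 2$, a special case that is itself wide open.

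Realistically, then, the honest \emph{plan} is not to settle Hadwiger's Conjecture but to do what the rest of this paper does: prove sharper lower bounds on $h(G)$ in terms of $\alpha(G)$ and $|V(G)|$ with $\alpha(G)$ treated as a fixed parameter, and separately push the Robertson--Seymour--Thomas structural reduction toward $k = 7$. In every direction I expect the decisive obstacle to be the one that has blocked this problem from the outset: there is no known mechanism linking the global invariant $\chi(G)$ to the existence of a large clique minor without conceding a $\sqrt{\log k}$ factor or a factor of $2$ somewhere, and supplying such a mechanism --- presumably a new structure theorem for $K_k$-minor-free graphs --- is the crux.
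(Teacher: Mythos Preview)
Your assessment is correct: the displayed statement is Hadwiger's Conjecture itself, and the paper does not prove it --- it is stated in the introduction as an open conjecture, with the paper then passing to the weaker statement $\alpha(G)\,h(G) \ge |V(G)|$ and proving quantitative approximations to that. Your survey of the known cases ($k\le 6$), the Kostochka--Thomason obstruction, and the Duchet--Meyniel packing approach accurately reflects why no proof is available, and there is nothing further to compare.
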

Hadwiger's Conjecture for $k=4$ was proved by Dirac \cite{dirac52}, the case $k=5$ was shown equivalent to the Four Color Theorem
\cite{4color1,4color2,4colorsummary} by Wagner \cite{wagner37}
and the case $k=6$ was shown equivalent to the Four Color Theorem by Robertson et al. \cite{robertson93}.  Hadwiger's Conjecture for $k \geq 7$ remains
open.  Let $h(G)$ denote the Hadwiger number, the size of the largest complete minor of $G$.  Since $\alpha(G) \chi(G) \geq \left| V(G) \right|$, Hadwiger's Conjecture
implies the following conjecture, which was observed in \cite{duchet81}, \cite{maffray87}, and \cite{woodall87}.
\begin{woodconj}
   For every graph $G$, $\alpha(G) h(G) \geq \left| V(G) \right|$.
\end{woodconj}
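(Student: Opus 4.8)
This inequality is Woodall's conjecture, so the following is a plan, not a finished argument. It factors into two parts. The easy part is the coloring bound $\chi(G)\,\alpha(G) \ge |V(G)|$: a proper coloring of $G$ partitions $V(G)$ into $\chi(G)$ independent sets, each of size at most $\alpha(G)$, so $\chi(G) \ge |V(G)|/\alpha(G)$. The substantive part is $h(G) \ge \chi(G)$. Given both, one just chains them: $\alpha(G)\,h(G) \ge \alpha(G)\,\chi(G) \ge |V(G)|$. So the plan is: (1) record the coloring bound; (2) establish $h(G) \ge \chi(G)$; (3) multiply.

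Step (2) is precisely Hadwiger's Conjecture, recalled above. If it is granted, the proof is finished on the spot. Restricting to what is actually known --- Dirac's theorem for $k=4$ and the Four Color Theorem for $k=5,6$, i.e.\ Hadwiger's Conjecture for $\chi(G) \le 6$ --- the same three lines already prove the target inequality unconditionally whenever $\alpha(G) \ge |V(G)|/6$, since then $\chi(G) \le 6$ by step (1). The open case is therefore $\alpha(G)$ small relative to $|V(G)|$, where one cannot route through $\chi(G)$ and must instead exhibit a $K_t$-minor with $t \ge |V(G)|/\alpha(G)$ directly.

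For that unconditional attack I would use the Duchet--Meyniel framework: build the complete minor greedily, repeatedly taking a connected subgraph, contracting it to a single branch vertex, deleting its closed neighborhood, and iterating; a breadth-first layering argument keeps the branch sets connected and pairwise adjacent and yields a $K_t$-minor with roughly $t\cdot 2\alpha(G) \ge |V(G)|$. The main obstacle --- the reason the conjecture is hard --- is removing the factor of $2$: the vertices deleted only as neighbors of a contracted branch set (never used in a branch set) must be charged to independent sets without the twofold loss. The route suggested by the title of this paper is to choose the branch sets so that the auxiliary graph recording those deletions is chordal, hence with independence number controlled by a perfect elimination ordering; then, in a vertex-minimal counterexample, one would first show $G$ is $\lceil |V(G)|/\alpha(G)\rceil$-connected (splitting at a small separator and inducting otherwise), run the contraction along a BFS tree, and bound the independence number of the contracted layer graph via chordality, aiming to replace $2\alpha(G)$ by $\alpha(G)$. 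Closing that last factor of $2$ unconditionally is exactly what remains open; short of it, this strategy delivers only a partial improvement such as $(2\alpha(G) - \lceil \logbound{\alpha(G)} \rceil)\,h(G) \ge |V(G)|$.
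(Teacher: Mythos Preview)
The statement you were asked to prove is not a theorem in the paper; it is stated there as an open conjecture (observed by Duchet--Meyniel, Maffray, and Woodall), and the paper does not attempt a proof of it. Your proposal correctly recognizes this: you explicitly call it a plan rather than a finished argument, you give the standard conditional derivation from Hadwiger's Conjecture via $\chi(G)\alpha(G)\ge |V(G)|$, and you note that the unconditional case is open. That matches the paper's own presentation in the introduction, which derives the conjecture from Hadwiger's Conjecture in exactly the same one-line fashion and then moves on to proving weaker bounds.

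On the unconditional side, your sketch of the Duchet--Meyniel greedy-contraction approach and the observation that the whole difficulty is eliminating the factor $2$ are accurate, and the partial bound you quote at the end, $(2\alpha(G) - \lceil \logbound{\alpha(G)} \rceil)\,h(G) \ge |V(G)|$, is precisely the main theorem of the paper. A couple of small corrections to your heuristic description of how the paper gets there: the chordal structure is not on an auxiliary ``deletion'' graph but on the minor $H$ itself, obtained by contracting carefully chosen connected branch sets in a blue subgraph $G_b$; and the mechanism is not a BFS-tree layering but a pair of operations (extending and breaking) that maintain a partial simplicial elimination ordering, after which a weighted independence bound on the resulting perfect graph (Theorem~\ref{perfectweight}) yields the clique size. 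Your high-level picture---build branch sets, control the minor's independence structure via chordality, bound a weighted $\alpha$---is right, but the specific machinery differs from what you describe.

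In short: there is no proof in the paper to compare against, and your write-up appropriately treats the statement as the open problem it is.
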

This conjecture seems weaker than Hadwiger's Conjecture, however Plummer, Stiebitz, and Toft \cite{plummer03} showed that for graphs with $\alpha(G) = 2$, the two
conjectures are equivalent.  In 1981, Duchet and Meyniel \cite{duchet81} showed that $(2 \alpha(G) - 1) h(G) \geq \left| V(G) \right|$.
No general improvement on this theorem
has been made for the case $\alpha(G) = 2$.  Seymour asked for any improvement on this result for $\alpha(G) = 2$, conjecturing that
there exists an $\epsilon > 0$ such that if $\alpha(G) = 2$, then $G$ has a complete minor of size $(1/3 + \epsilon) n$.
Recently, Kawarabayashi, Plummer, and Toft \cite{kawarabayashi05} showed that $(4 \alpha(G) - 3) h(G) \geq 2 \left| V(G) \right|$ when $\alpha(G) \geq 3$ and
Kawarabayashi and Song \cite{kawarabayashi07} improved this to $(2 \alpha(G) - 2) h(G) \geq \left| V(G) \right|$ when $\alpha(G) \geq 3$.
Wood \cite{wood07} proved $(2 \alpha(G) -1)(2 h(G) - 5) \geq 2 \left| V(G) \right| - 5$ for all graphs $G$.
Our main result is to improve the bound for graphs with $\alpha(G) \geq 14$.
\begin{mainlogthm} \label{mainlogthm}
   Let $G$ be an $n$-vertex graph.  Then $K_{\left\lceil n/r \right\rceil}$ is a minor of $G$, where
   \begin{align*}
       r = 2 \alpha(G) -  \left\lceil \log_{\tau}(\tau \alpha(G) /2) \right\rceil  \text{ and }  \tau = \frac{2 \sqrt{2}}{\sqrt{2} - 1} \approx 6.83.
   \end{align*}
\end{mainlogthm}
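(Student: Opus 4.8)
The plan is to deduce the clique minor from a chordal minor of $G$, exploiting how transparent chordal graphs are: if $H$ is chordal then $K_{h(H)}$ is a minor of $H$, so $h(H)-1\le \mathrm{tw}(H)=\omega(H)-1$, giving $h(H)=\omega(H)=\chi(H)\ge |V(H)|/\alpha(H)$ (the last step since the $\chi(H)$ colour classes are independent, of size at most $\alpha(H)$). More generally, blowing each vertex up into a clique of prescribed size keeps a perfect graph perfect and changes nothing but the clique sizes, which is the weighted form of the inequality $\alpha\chi\ge |V|$ used below. Since contracting a connected set can only decrease $h$, and an independent set of a quotient lifts (one vertex per part) to an independent set of $G$ so contraction can only decrease $\alpha$, it suffices to produce a connected partition $\mathcal B$ of $V(G)$ for which the quotient $H:=G/\mathcal B$ is chordal and $|V(H)|\ge n\,\alpha(H)/r$; equivalently, to build such an $H$ with $\alpha(H)\le r$ that, in weighted form, accounts for all $n$ vertices. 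Then $\omega(H)=\chi(H)\ge\lceil n/r\rceil$ and we are done.

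First I would dispose of the small cases: for $\alpha(G)\le 13$ one has $r=2\alpha(G)-1$ or $2\alpha(G)-2$, so the statement is already contained in Duchet--Meyniel and Kawarabayashi--Song; so assume $\alpha(G)$ large. For large $\alpha(G)$ the chordal minor is built incrementally as a growing clique-sum: maintain a chordal minor $H_i$ of $G$ with a distinguished clique $Q_i$, pick a clique $C$ of the uncontracted remainder whose common neighbourhood contains a minor contracting onto $Q_i$, contract the rest of $N[C]$ into $C$, and glue the resulting clique onto $H_i$ along $Q_i$ to obtain a larger chordal minor $H_{i+1}$ with a new clique $Q_{i+1}$. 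The engine is the elementary fact that deleting the closed neighbourhood of a nonempty clique drops the independence number by at least one, so after $k$ rounds the remainder has independence number at most $\alpha(G)-k$; running the rounds down until this reaches a constant and finishing with Duchet--Meyniel on what is left recovers a bound of shape $2\alpha(G)-(\text{number of rounds})$. To make the round count, one must balance how large a clique $C$ to demand at each stage (this governs how fast $\omega(H_i)$, hence the final clique minor, grows) against how much of $G$ the neighbourhoods $N[C]$ swallow (this governs how much weight survives into the final quotient); optimizing the trade-off forces the surviving fraction to shrink by the factor $\tau=2\sqrt2/(\sqrt2-1)=4+2\sqrt2$ per round, so the useful number of rounds is $\log_\tau(\tau\alpha(G)/2)$, exactly the correction in $r$.

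The step I expect to be the real obstacle is this optimization together with the bookkeeping that keeps everything chordal: the three demands on each peeled-off slice --- that it glue on as a clique-sum (preserving chordality), that it waste few vertices, and that it contribute strictly less than $2$ to $r$ per vertex it represents --- are in tension, and reconciling them is where the constant is pinned down; the $\sqrt2$ surfaces on choosing the clique-size threshold so as to minimize a quadratic in it, which is the source of $\tau=2\sqrt2/(\sqrt2-1)$. A subsidiary point needing care is that passing to the chordal quotient not itself cost more than the $\lceil\log_\tau(\tau\alpha(G)/2)\rceil$ we are trying to save; this is presumably why the argument is carried out in weighted form and why the weighted perfect-graph inequality, rather than its unweighted special case $\alpha(H)\chi(H)\ge|V(H)|$, is the version actually used.
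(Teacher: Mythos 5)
Your framework is the right one --- the paper does exactly what your first paragraph describes: it builds a connected partition of $V(G)$, contracts to get a chordal graph $H$, weights each vertex of $H$ by the size of its part, and applies the weighted perfect-graph inequality $\omega(H)\ge\lceil f(V(H))/\alpha_f(H)\rceil$ with the bound $\alpha_f(H)\le 2\alpha(G)-\lceil\log_\tau(\tau\alpha(G)/2)\rceil$. You have also correctly located the constant $\tau=4+2\sqrt2$ and correctly disposed of $\alpha(G)\le 13$ via Duchet--Meyniel and Kawarabayashi--Song. But everything after that is a placeholder rather than a proof. The construction you sketch --- linear ``rounds'' that peel off the closed neighbourhood of a clique, dropping the independence number of the remainder by one per round and gluing each slice onto the previous chordal minor as a clique-sum --- is not carried out, and you concede as much when you call the optimization and the chordality bookkeeping ``the real obstacle.'' No argument is given for why each round saves a full unit in the coefficient of $\alpha(G)$, why only $\log_\tau(\tau\alpha(G)/2)$ rounds are ``useful,'' or how the clique-sum gluing is compatible with accounting for all $n$ vertices. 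The claim that ``optimizing the trade-off forces the surviving fraction to shrink by the factor $\tau$ per round'' is asserted, not derived.

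The paper's actual mechanism is structurally different and is where all the work lives. Each component $C$ of the remainder is broken by $b=\lceil(\alpha(C)+1)/2\rceil$, producing connected sets $T$ with $\alpha(T)\ge b$ and $|T|\le 2b-1$ (so independence number roughly \emph{halves} at each level of a recursion tree of components, rather than decreasing by one per round); before each breaking, previously built sets are selectively \emph{extended} into the new components, and edges are recoloured red, precisely so that the contracted graph admits a simplicial elimination ordering. The quotient is taken in the blue spanning subgraph $G_b$, not in $G$ --- a point your reduction ``contraction can only decrease $\alpha$'' glosses over, since an independent set of parts in $G_b$ need not be non-touching in $G$. The bound on the weighted independence number is then proved by an induction over the containment tree of components (the paper's Lemma on $\gap(C,\mathcal I,T)$), using a potential function $f$ with properties P1--P11 and a case analysis over which of the sets in $\mathcal I$ straddle the current component; the threshold $\sqrt2(b-1)$ separating the classes $\mathcal H_1$ and $\mathcal H_2$ is what pins down $\tau$. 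None of this inductive machinery, nor any substitute for it, appears in your proposal, so the central inequality $\sum_{T\in\mathcal I}|T|\le 2\alpha(G)-\lceil\log_\tau(\tau\alpha(G)/2)\rceil$ remains unproved.
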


Using a more careful analysis, we are able to improve the result for $\alpha(G) = 5$.
\begin{alpha5thm} \label{alpha5thm}
   Let $G$ be an $n$-vertex graph with $\alpha(G) = 5$.  Then $K_{5n/38}$ is a minor of $G$.
\end{alpha5thm}
The proof of Theorem~\ref{alpha5thm} appears in the appendix which is posted online \cite{hadappendix}.

A graph $G$ is \textbf{perfect} if $\chi(H) = \omega(H)$ for every induced subgraph $H$ of $G$.
For two vertex sets $T, S \subseteq V(G)$, we say $T$ \textbf{touches} $S$ if $T \cap S \neq \emptyset$ or there is an edge $xy \in E(G)$ with
$x \in T$ and $y \in S$.  For $T \subseteq V(G)$, we define $\alpha(T) = \alpha(G[T])$ and
$N(T) = \left\{ x \in V(G) : \exists y \in T, xy \in E(G)\right\} = \cup_{v \in T} N(v)$.  If $H$ is a subgraph of $G$ and $T \subseteq V(G)$, then
we define $H \cap T = G[V(H) \cap T]$.
$H$ is a \textbf{spanning subgraph} of $G$ if $H$ is a subgraph of $G$ and $V(H) = V(G)$.
A graph $G$ is \textbf{chordal} if $G$ has no induced cycle of length at least 4.
A vertex is \textbf{simplicial} if its neighborhood is a clique. A \textbf{simplicial elimination ordering} is an order $v_n, \ldots, v_1$
in which vertices can be deleted so that each vertex $v_i$ is a simplicial vertex of the graph induced by $\left\{ v_1, \ldots, v_i \right\}$.
A \textbf{partial simplicial elimination ordering} is an ordered vertex set $U = \left\{v_1, \ldots, v_k\right\} \subseteq V(G)$,
such that for each $v_i v_j \notin E(G)$ with $i < j$ and $v_i, v_j \in U$ and each component $C$ of $G - \left\{v_1, \ldots, v_j\right\}$,
at most one of $v_i$ or $v_j$ touches $C$.
Dirac \cite{dirac61} proved that a graph is chordal if and only if it has a simplicial elimination ordering, and Berge \cite{berge60} observed that
by greedily coloring the vertices of a simplicial elimination ordering one obtains an $\omega(G)$-coloring of $G$,
proving that chordal graphs are perfect.

Let $f : V(G) \rightarrow \mathbb{Q}^{+}$ be a weight function on $V(G)$.  For $A \subseteq V(G)$, define $f(A) = \sum_{v \in A} f(v)$.
Then the \textbf{weighted independence number of $G$ relative to $f$} is
\begin{align*}
   \alpha_f(G) = \max \left\{ f(A) : A \text{ is an independent set in } G \right\}\!.
\end{align*}
We shall need the following result.

\begin{perfectweight} \label{perfectweight}
    Let $H$ be a perfect graph and $f$ a weight function on $V(H)$.  Then
    \begin{align*}
        \omega(H) \geq \left\lceil\frac{ f(V(H)) }{\alpha_f(H)}\right\rceil.
    \end{align*}
\end{perfectweight}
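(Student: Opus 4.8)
We want to show that for a perfect graph $H$ with positive rational weights $f$, the clique number satisfies $\omega(H) \geq \lceil f(V(H))/\alpha_f(H) \rceil$. The plan is to reduce the weighted statement to the unweighted one by a blow-up (vertex multiplication) argument. Since $f$ takes rational values, clear denominators: let $M$ be a common denominator, so $M f(v) \in \mathbb{Z}^+$ for every $v$.

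First I would construct the blown-up graph $H'$: replace each vertex $v$ of $H$ by a clique $Q_v$ of size $M f(v)$, and join $Q_u$ to $Q_v$ completely whenever $uv \in E(H)$. (Replacing the independent set $\{v\}$ by a clique rather than an independent set is the standard trick that makes $\omega$ scale with the weight.) The key structural facts to verify are: (i) $H'$ is perfect — this follows from the Replacement Lemma / Lovász's result that substituting a perfect graph into a vertex of a perfect graph yields a perfect graph, and a clique is trivially perfect; alternatively one can invoke the Weak Perfect Graph Theorem and check that neither $H'$ nor its complement contains an induced odd cycle of length $\geq 5$, but citing the substitution lemma is cleanest. (ii) $\omega(H') = M \cdot \omega_f(H)$ where the weighted clique number equals... actually more simply, a clique in $H'$ corresponds to choosing a clique $K$ in $H$ and taking all of $\bigcup_{v \in K} Q_v$, so $\omega(H') = M \cdot \omega(H)$ if all weights were equal, but in general $\omega(H') = \max_{K \text{ clique of } H} \sum_{v \in K} M f(v) \leq M f(V(H)) $; more importantly $\omega(H') \leq M \,\omega(H) \cdot \max_v f(v)$ is too weak — instead I note every clique of $H$ has at most $\omega(H)$ vertices, so $\omega(H') \le M \,\omega(H)\, \max_v f(v)$, which is not what we want. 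So the right move is the reverse: bound $\alpha(H')$ and $|V(H')|$ and apply $\chi(H') \alpha(H') \geq |V(H')|$ together with $\chi(H') = \omega(H')$.

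So the refined plan: (iii) $|V(H')| = \sum_v M f(v) = M f(V(H))$. (iv) $\alpha(H') = M \alpha_f(H)$: an independent set in $H'$ can contain at most one vertex from each $Q_v$ (since $Q_v$ is a clique) and, if it contains a vertex of $Q_u$ and a vertex of $Q_v$ with $u \neq v$, then $uv \notin E(H)$; hence independent sets of $H'$ correspond to independent sets $A$ of $H$ with multiplicity, giving maximum size $\sum_{v \in A} M f(v) = M f(A)$, maximized at $M \alpha_f(H)$. (v) Since $H'$ is perfect, $\omega(H') = \chi(H') \geq |V(H')| / \alpha(H') = M f(V(H)) / (M \alpha_f(H)) = f(V(H))/\alpha_f(H)$. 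Finally, $\omega(H') = \omega(H)$ because a maximum clique of $H'$ uses vertices from cliques $Q_v$ with $v$ ranging over a clique of $H$, but to be a clique in $H'$ it may include at most... no: it includes \emph{all} of each $Q_v$ it touches, so a clique of $H'$ has the form $\bigcup_{v \in K} Q_v$ for a clique $K$ of $H$ — wait, that would make $\omega(H')$ depend on weights. The clean resolution: $\omega(H') = \omega(H)$ fails in general, so instead I should take $M f(v)$ \emph{copies} forming an independent set is wrong too. The correct classical statement (this is exactly how one proves the weighted perfect graph results) is that replacing $v$ by a clique $Q_v$ gives $\omega(H') = \omega_f(H)$, the \emph{weighted} clique number $\max_K f(K)$; and then $\omega_f(H) \ge f(V(H))/\alpha_f(H)$ by perfection applied to $H'$, and separately $\omega_f(H) \le \omega(H) \cdot$ (nothing) — rather, we only need the displayed inequality as stated if in fact the theorem means to bound $\omega(H)$; but $\omega(H) \ge \omega_f(H)/\max f$ is too weak. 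I suspect the intended reading uses $f$ bounded appropriately or the bound as literally stated with $\omega(H)$ requires $\omega_f(H) \le \omega(H)$, true only when $\max_v f(v) \le 1$.

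The main obstacle, then, is pinning down exactly which clique parameter the blow-up controls and confirming the inequality as stated; modulo that, the proof is: clear denominators, substitute a clique of size $Mf(v)$ for each $v$, invoke the substitution lemma for perfection, compute $|V(H')| = Mf(V(H))$ and $\alpha(H') = M\alpha_f(H)$, apply $\omega(H') = \chi(H') \ge |V(H')|/\alpha(H')$, and transfer back to $H$. I expect the author either (a) takes $f$ integer-valued (so "$\mathbb{Q}^+$" is used only for the ratio) and reads $\omega$ as the ordinary clique number with the understanding that a maximum clique of $H$ can be weighted up, or (b) the statement is applied in a context where the weighted and unweighted clique numbers coincide. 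I would write the proof for integer weights first and remark that the rational case follows by scaling, flagging the clique-number bookkeeping as the one point needing care.
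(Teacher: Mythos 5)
There is a genuine gap, and it is exactly the point you flagged at the end: you chose the wrong substitution. The paper replaces each vertex $v$ by an \emph{independent set} $H_v = f(v)K_1$ of size $f(v)$ (after clearing denominators), forming the composition $H' = H[H_{v_1},\ldots,H_{v_n}]$, which is perfect by Lov\'asz's substitution theorem. With independent-set substitution the bookkeeping you were worried about disappears: a clique of $H'$ meets each $H_v$ in at most one vertex, so $\omega(H') = \omega(H)$ (the ordinary clique number, no weights); a maximal independent set of $H'$ contains each $H_v$ entirely or not at all (all vertices of $H_v$ have the same neighborhood), so $\alpha(H') = \alpha_f(H)$; and $|V(H')| = f(V(H))$. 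Then $\omega(H) = \omega(H') = \chi(H') \geq \lceil |V(H')|/\alpha(H')\rceil$ gives the theorem as stated, with the genuine clique number $\omega(H)$ on the left.

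Your proposal instead substitutes a \emph{clique} $Q_v$ of size $Mf(v)$, which is the standard move for weighted \emph{clique} number results but is the wrong dual here: it yields $\omega(H') = M\max_K f(K)$ (the weighted clique number) and $\alpha(H') = \alpha(H)$, so the perfection inequality produces $\omega_f(H) \geq f(V(H))/\alpha(H)$ rather than the claimed bound. Note also that your step (iv) is internally inconsistent with your own construction: if each $Q_v$ is a clique, an independent set of $H'$ takes at most one vertex from each $Q_v$ and therefore has size $|A|$, not $\sum_{v\in A} Mf(v)$. You correctly sensed that something was off ("the clique-number bookkeeping as the one point needing care") but never landed on the fix, which is simply to swap cliques for independent sets in the blow-up; the rest of your outline (clear denominators, invoke the substitution lemma, apply $\chi \geq n/\alpha$) then matches the paper's proof.
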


The goal of our algorithm is to find a minor $H$ of $G$ such that $H$ is a chordal graph, and then to devise a weight function on the vertices of $H$
to which we apply Theorem~\ref{perfectweight}.  Most of the time, the weight of a vertex $v$ in $H$ is the number of vertices of $G$ which are contracted
to $v$.  The algorithm builds the minor $H$ by using two
operations: extension and breaking.  The key property is that at each step, the algorithm uses the operations to increase the number
of vertices in a partial simplicial elimination ordering.  Once all vertices are included in the partial simplicial elimination ordering,
we have a simplicial elimination ordering, so that the algorithm has produced a chordal graph.

In Section~2.2, we provide an algorithm which yields an alternate proof of Kawarabayashi and Song's
\cite{kawarabayashi07} result.
\begin{alpha2a2thm} \label{alpha2a2thm}
Let $G$ be an $n$-vertex graph.  Then $K_{\left\lceil r \right\rceil}$ is a minor of $G$, where
\begin{equation*} \label{2a_2bound}
    r = \begin{cases}
        n & \alpha(G) = 1, \\
        \frac{n}{3} & \alpha(G) = 2, \\
        \frac{n}{2 \alpha(G) - 2} & \alpha(G) \geq 3.
    \end{cases}
\end{equation*}
\end{alpha2a2thm}

We say that $G$ has an \textbf{odd complete minor of order at least $\ell$} if there are $\ell$ disjoint trees in $G$ such that every two of
them are joined by an edge, and in addition, all the vertices of the trees are two-colored in such a way that edges within trees
are bichromatic and edges between trees are monochromatic.  Gerards and Seymour conjectured that if a graph has no odd complete minor
of order $\ell$, then it is $(\ell - 1)$-colorable.  This is substantially stronger than Hadwiger's Conjecture.  The algorithm used
by Duchet and Meyniel to prove $(2\alpha(G) - 1) h(G) \geq \left| V(G) \right|$ produces odd complete minors.  The algorithm in our alternate proof
of Kawarabayashi and Song's \cite{kawarabayashi07} result in Section~2.2 can be shown to produce an odd complete minor, so every graph with $\alpha(G) \geq 3$ has
an odd complete minor of size at least $n/(2\alpha(G) -2)$.  With a little more work, we can show that our algorithm in Section~2.3 not only
produces a complete minor but actually produces an odd complete minor.  Therefore every graph $G$ has an odd complete minor of size
at least $n/(2\alpha(G) - \logbound{\alpha(G)})$.

The rest of the paper is organized as follows: in Section~2 we define the operations and define the algorithms, in Section~3
we prove some lemmas and theorems about the operations used during the algorithms,
and in Section~4 we analyze the algorithm.
In the appendix posted online \cite{hadappendix} we specialize the algorithm to $\alpha(G) = 5$, and by changing the weight
function we find a complete minor of size at least $5n/38$, which is slightly larger than the $n/8 = 5n/40$ produced by the general algorithm.

After completing our work, we learned that Fox \cite{fox09} proved using claw-free structural theorems of Chudnovsky and Fradkin \cite{chudnovsky09}
that every graph $G$ has a complete minor of size at least $\frac{\left| V(G) \right|}{(2-c) \alpha(G)}$ where $c \approx 0.017$.  Our result is better when $\alpha(G) \leq 230$ and has the advantage of producing an odd complete minor.

\section{Definition of the algorithms}

\newdef{acceptabledef}
\newdef{extdef}
\newtheorem{sizelemma}[thmctr]{Lemma}
\newtheorem{acceptablegood}[thmctr]{Theorem}
\newtheorem{extdecalpha}[thmctr]{Lemma}

The algorithm first builds a family of disjoint vertex sets spanning connected graphs which partition $V(G)$
and a spanning subgraph of $G$.  We start with the empty family
and at each step apply an operation which either adds a new set to the family,
adds vertices to an existing set in the family, or updates the spanning subgraph.
To identify the spanning subgraph, we color the edges of $G$: initially all edges are blue and during the algorithm
we color some edges red.
We denote the spanning subgraph induced on the blue edges by $G_b$.
When we color some edges red, we make sure that each element in the family spans a connected graph in $G_b$.
Once we have obtained a partition of $V(G)$,
we define a graph $H$ by starting with $G_b$ and contracting each set of the partition to a single vertex. 
We need the spanning subgraph $G_b$ because starting from $G$ and contracting each set in the partition might not yield
a chordal graph.
Throughout this paper, a subscript of $G$ is implied on $\alpha$ and $N$.

\subsection{Operations used in the algorithm}

There are two operations that are carried out by the algorithm:
extending and breaking.  We are given a labeled (ordered) family of
disjoint vertex sets $\mathcal{F}$ and a red/blue coloring of the
edges of $G$. Let $U = V(G) - \cup_{T \in \mathcal{F}} T$, and let
$G_b$ be the spanning subgraph of blue edges. We define the
following operations:

\medskip

\textbf{Extending $T$ into $X$ by $k$:}
Let $T \in \mathcal{F}$, let $X \subseteq U$ such that $G_b[X \cup T]$ is connected, there are no red edges between $T$ and $X$,
 and let $k \in \mathbb{Z}^{+}$ such that $k \leq \alpha(X - N(T))$.
The operation extends $T$ into $X$ by $k$ by adding at most $2k$ vertices from $X$ into $T$ so that the new $G_b[T]$ is still connected and
we increase $\alpha(T)$ by at least $k$.  When extending $T$ into $X$, the order of the sets in $\mathcal{F}$ is unchanged.
In the extension we always follow the algorithm described in the proof of Lemma~\ref{extlemma}, which shows that
such an extension is possible.
Extending $T$ into $X$ by $k$ is always \textbf{acceptable}.

\medskip

\textbf{Breaking $X$ by $k$:}  Let $k$ be a positive integer, and let $X \subseteq U$ such that $X$ does not touch $U - X$ in $G$
(i.e. $X$ is a union of some components of $G[U]$).

Step (a): For any $T \in \mathcal{F}$ and any component $D$ of $G[X]$ with $\alpha(D - N(T)) = \alpha(D)$,
we color all edges between $T$ and $D$ red.

Step (b): If there exists a component $D$ of $G[X]$ with independence number at least $k$,
let $I$ be an independent set in $D$ with $\left| I \right| \geq k$ and let $v$ be any vertex in $I$.
Add $T = \left\{ v \right\}$ to $\mathcal{F}$ and then extend $T$ into $D-T$ by $k-1$.  Lemma~\ref{extduringbreaking} shows that $T,D-T,k$ satisfy the
conditions in the extension.
We then set $X := X - T$ and continue Step (b) until every component in $G[X]$ has independence number strictly less than $k$.
The new sets produced are added last in the ordering of $\mathcal{F}$.

\begin{acceptabledef}
We say that breaking $X$ by $k$ is \textbf{acceptable} if both of the following conditions hold before we start breaking (before Step (a)):
\begin{itemize}
    \item For all $T \in \mathcal{F}$ and every component $D$ of $G[X]$ either
          the edges between $T$ and $D$ are already red,
          or $\alpha(D - N(T)) = \alpha(D)$ (the edges will become red in Step (a)), or $\alpha(D - N(T)) < k$.
    \item For every component $D$ of $G[X]$, $\alpha(D) < 2k$.
\end{itemize}
In other words, an acceptable breaking means each set $T$ in the original family and each component $D$ of $U$
will either have the edges between $T$ and $D$ colored red or touch with blue edges every set born during Step (b) in $D$,
and the new sets will touch each other as well.
\end{acceptabledef}

\begin{extdef}
We say that $\mathcal{F}$ is \textbf{formed by acceptable operations} in $G$ if $\mathcal{F}$ is formed by starting with the empty family
and then performing any sequence of acceptable operations.
When we extend $T$ into $X$ by $k$ we say that the \textbf{amount} of the extension is $k$.
For $T \in \mathcal{F}$, define $\ext(T)$ to be one plus the total amount of extensions of $T$, which includes the extensions in the breaking when $T$
was born and all other extensions of $T$.
\end{extdef}

In Theorem~\ref{acceptablegood} we show that we obtain a chordal graph when we start with the graph $G_b$ and contract each set of the
partition.

\begin{acceptablegood} \label{acceptablegood}
    Let $\mathcal{F}$ be a partition of $V(G)$ formed by acceptable operations in $G$, and let $G_b$ be the spanning subgraph of blue edges.
    Let $H$ be the graph obtained by starting from $G_b$ and contracting
    each set of $\mathcal{F}$ to a single vertex.  Then $H$ is a chordal graph.
\end{acceptablegood}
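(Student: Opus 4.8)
The plan is to show that the contracted graph $H$ admits a simplicial elimination ordering, and then invoke Dirac's characterization of chordal graphs. The natural candidate for this ordering comes from the partial simplicial elimination ordering that the operations are designed to maintain: as remarked in the excerpt, ``the key property is that at each step, the algorithm uses the operations to increase the number of vertices in a partial simplicial elimination ordering.'' So the first step is to make precise, by induction on the sequence of acceptable operations, the invariant that after any sequence of acceptable operations the family $\mathcal{F}$ (ordered by birth time) forms a partial simplicial elimination ordering of $G_b$ in the sense defined in the introduction: for each pair $T_i T_j \notin E(G_b)$ with $i < j$, and each component $C$ of $G_b - \{T_1,\dots,T_j\}$, at most one of $T_i, T_j$ touches $C$. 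Here $T_i$ is identified with its contracted vertex in $H$, and the condition is checked in $H$, equivalently in $G_b$ with the sets uncontracted.

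The base case (empty family) is vacuous. For the inductive step I would treat the two operations separately. Extending $T$ into $X$ by $k$ does not change the order of $\mathcal{F}$, only enlarges one set $T$ using vertices of $U$ and possibly recolors nothing (there are no red edges between $T$ and $X$, and the operation keeps $G_b[T]$ connected); the main point is that absorbing vertices of $U$ into $T$ can only merge components of $G_b - \{T_1,\dots,T_j\}$ and can only shrink the set of sets touching a given surviving component, so the invariant is preserved — I would check that no new ``bad'' non-edge/component triple is created. Breaking $X$ by $k$ is the substantive case: Step~(a) colors red exactly those edges between an old $T$ and a component $D$ of $G[X]$ for which $\alpha(D - N(T)) = \alpha(D)$, i.e.\ $T$ has no blue neighbor dominating an independent set of $D$; Step~(b) creates new sets $T' = \{v\}\cup(\text{extension})$ inside the components of $X$, placed last in the order. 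Using the definition of \emph{acceptable} breaking, I would argue: (i) the new sets born inside different components $D, D'$ of $X$ are pairwise non-adjacent in $G_b$ because $X$ is a union of components of $G[U]$ and $D,D'$ are in distinct components, so they never violate the ordering condition with respect to each other or to later structure; (ii) for an old $T$ and a new set $T'$ born in $D$, either the $T$–$D$ edges were made red (so $T T' \notin E(G_b)$, but then $T$ touches no component of $G_b$ minus the later sets that $T'$'s component contributes to, by the acceptability bookkeeping), or $\alpha(D - N(T)) < k$ forces $T$ to have a blue edge into the extension of $T'$ (since $T'$ absorbs an independent set of size $\ge k$ of $D$, and $N(T)$ kills fewer than $k$ of them), so $T T' \in E(G_b)$ and the ordering condition is again not violated; and (iii) within a single $D$, the sets born during Step~(b) are consecutive in the new order and, because each is extended to raise its independence number while $\alpha(D) < 2k$ caps how many can be born, they pairwise touch with blue edges — so again no bad triple appears among them.

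Once the invariant is established for the final \emph{partition} $\mathcal{F}$, every non-edge/component triple condition holds, and since $\mathcal{F}$ now exhausts $V(G)$ the contracted graph $H$ on vertex set $\mathcal{F}$ has the property that for every $T_i T_j \notin E(H)$ with $i<j$, at most one of $T_i,T_j$ lies in (equivalently touches) any component of $H - \{T_1,\dots,T_j\}$ — in particular, listing the vertices of $H$ in the order $T_{|\mathcal F|}, \dots, T_1$ gives a genuine simplicial elimination ordering: when we delete down to $\{T_1,\dots,T_j\}$, the neighborhood of $T_j$ within this set is a clique, since any two non-adjacent neighbors $T_i, T_{i'}$ ($i,i' < j$) would be a non-edge both touching the component of $T_j$, contradicting the invariant applied to the pair $\{T_i, T_j\}$ (or $\{T_{i'},T_j\}$) and that component. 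By Dirac's theorem, $H$ is chordal. I expect the main obstacle to be case~(ii)/(iii) of the breaking step: correctly formalizing the ``bookkeeping'' clause in the definition of acceptable — namely that a red $T$–$D$ coloring is exactly what licenses $T$ to be non-adjacent to all sets born in $D$ without breaking the ordering — and verifying that the extensions performed inside $D$ during Step~(b), governed by $\alpha(D) < 2k$, really do make all those new sets pairwise blue-adjacent; this is where Lemma~\ref{extlemma} and Lemma~\ref{extduringbreaking} (the feasibility of the extensions) must be fed in, and where a careless argument could miss a non-edge whose two endpoints both touch a later component.
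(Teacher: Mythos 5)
Your proposal follows essentially the same route as the paper: it establishes by induction over the acceptable operations that $\mathcal{F}$ remains a partial simplicial elimination ordering in $G_b$ (the paper's Theorem~\ref{pseopreserved}, split into Lemma~\ref{accbreakpreservesclean} for breaking and Lemma~\ref{accextpreservesclean} for extension), with the same case analysis for breaking (edges to $D$ turned red versus $\alpha(D-N(T))<k$ forcing a blue edge into the new set, and $\alpha(D)<2k$ handling sets born in the same component), and then converts the final ordering into a simplicial elimination ordering of $H$. The only slip is in your last paragraph: the invariant must be applied to the non-touching pair $\{T_i,T_{i'}\}$ (not to $\{T_i,T_j\}$, which are adjacent) together with the component of $G_b-\{T_1,\dots,T_{\max(i,i')}\}$ containing $T_j$, exactly as stated correctly in the preceding clause of your own sentence.
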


\begin{sizelemma} \label{sizelemma}
    Consider a family $\mathcal{F}$ formed by a sequence of operations in $G$.
    Then for every $T \in \mathcal{F}$, $\left| T \right| \leq 2 \ext(T) - 1$.  Also, $\ext(T) \leq \alpha(T)$.
\end{sizelemma}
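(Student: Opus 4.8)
The plan is to prove both inequalities simultaneously, by induction on the number of operations used to form $\mathcal{F}$. The point is that a set $T \in \mathcal{F}$ is affected in only two ways: it is created (during a breaking, in Step~(b)) as a single vertex, and it is subsequently enlarged by extensions (including the extension performed at its birth, and any later extensions of $T$). Step~(a) of a breaking only recolours edges and does not touch any existing set or create a new one, so it can be ignored; and every extension of $T$, whether it happens during $T$'s birth or later, changes $|T|$, $\ext(T)$ and $\alpha(T)$ in a controlled way, by the properties built into the extension operation.

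First I would handle the creation step. When $T$ is born during a breaking $X$ by $k$, we set $T = \{v\}$ and then extend $T$ into $D - T$ by $k-1$. At the instant of birth, before this extension, $|T| = 1$, $\ext(T) = 1$ (the ``plus one'' in the definition, with no extensions yet counted), and $\alpha(T) = 1$, so $|T| = 2\ext(T) - 1$ and $\ext(T) = \alpha(T)$ both hold; if $k = 1$ the birth extension is empty and there is nothing further to check for $T$.

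Next I would handle a single extension step. Suppose $T$ currently satisfies $|T| \leq 2\ext(T) - 1$ and $\ext(T) \leq \alpha(T)$, and we extend $T$ into some $X$ by $k'$. By the definition of the extension operation (realised by the algorithm in the proof of Lemma~\ref{extlemma}), this adds at most $2k'$ vertices of $X$ to $T$ and increases $\alpha(T)$ by at least $k'$, while $\ext(T)$ increases by exactly $k'$ by the definition of $\ext$. Writing $T'$ for the enlarged set, we get $|T'| \le |T| + 2k' \le 2\ext(T) - 1 + 2k' = 2\ext(T') - 1$ and $\alpha(T') \ge \alpha(T) + k' \ge \ext(T) + k' = \ext(T')$, so both inequalities are preserved. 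Applying this step to the birth extension by $k-1$ and then to each later extension of $T$, starting from the creation step, yields $|T| \le 2\ext(T) - 1$ and $\ext(T) \le \alpha(T)$ for every $T \in \mathcal{F}$.

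I do not expect a genuine obstacle here: all the content sits in the two structural facts about extensions (``adds at most $2k$ vertices'' and ``raises $\alpha$ by at least $k$''), which belong to the definition/analysis of that operation, together with the observation that a newly born set is a single vertex with $\ext = \alpha = 1$. The only thing needing a little care is the bookkeeping — checking that the ``$+1$'' in the definition of $\ext(T)$ is exactly what produces the ``$-1$'' in $|T| \le 2\ext(T)-1$ and the leading $1$ in $\alpha(T) \ge \ext(T)$, and that no other event can decrease $\alpha(T)$ or increase $|T|$ relative to $\ext(T)$, since $T$ only ever grows, and only by extensions.
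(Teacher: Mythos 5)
Your proposal is correct and follows essentially the same route as the paper's proof: track a set from its birth as a single vertex through the birth extension by $k-1$ (at most $2k-2$ new vertices, $\alpha$ raised to at least $k$) and each later extension by $k'$ (at most $2k'$ new vertices, $\alpha$ raised by at least $k'$), using the guarantees of Lemma~\ref{extlemma}. The only difference is presentational — you phrase the bookkeeping as an explicit induction on operations, while the paper states it as a direct accounting — but the content is identical.
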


An acceptable breaking of $X$ by $k$ requires that for each component $D$ of $G[X]$ and each $T \in \mathcal{F}$ we have
$\alpha(D - N(T))$ in the correct range.  The following lemma shows that we can control $\alpha(D - N(T))$ by
using the extension operation.

\begin{extdecalpha} \label{extdecalpha}
    Let $T'$ be the set formed by extending $T$ into $X$ by $k$.  Then $\alpha(X - N(T)) - k \geq \alpha(X - T' - N(T'))$.
    That is, extending $T$ into $X$ by $k$ using the procedure in Lemma~\ref{extlemma} reduces $\alpha(X - N(T))$ by at least $k$.
\end{extdecalpha}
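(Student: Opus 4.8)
The plan is to exhibit one independent set in $X - N(T)$ whose size is $k + \alpha(X - T' - N(T'))$; this immediately yields the claimed inequality. The only structural input needed about the extension is the following, which is built into the procedure of Lemma~\ref{extlemma}: one fixes an independent set $I \subseteq X - N(T)$ with $\left| I \right| = k$ (possible since $k \leq \alpha(X - N(T))$) and forms $T'$ by adjoining $I$ together with at most $k$ further ``connector'' vertices of $X$, chosen so that $G_b[T']$ stays connected. In particular $T \subseteq T'$, hence $N(T) \subseteq N(T')$, and $I \subseteq T'$. These two facts, $T \subseteq T'$ and $I \subseteq T'$, are all the proof will use.

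First I would let $J$ be a maximum independent set of $G[X - T' - N(T')]$, so $\left| J \right| = \alpha(X - T' - N(T'))$. I claim $I \cup J$ is an independent set of $G$. Indeed $I$ and $J$ are each independent; since $I \subseteq T'$ we have $N(I) \subseteq N(T')$, so no vertex of $J$ (which is disjoint from $N(T')$) can be adjacent to a vertex of $I$; and $I \cap J = \emptyset$ because $I \subseteq T'$ while $J \cap T' = \emptyset$. Hence $G$ has no edge inside $I \cup J$, and moreover $\left| I \cup J \right| = \left| I \right| + \left| J \right|$.

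Next I would verify $I \cup J \subseteq X - N(T)$: we have $I \subseteq X - N(T)$ by the choice of $I$, and $J \subseteq X - T' - N(T') \subseteq X - N(T)$ since $N(T) \subseteq N(T')$. Therefore $I \cup J$ is an independent set contained in $X - N(T)$, and
\[
  \alpha(X - N(T)) \;\geq\; \left| I \cup J \right| \;=\; \left| I \right| + \left| J \right| \;=\; k + \alpha(X - T' - N(T')),
\]
which is exactly the asserted bound, and rearranges to $\alpha(X - N(T)) - k \geq \alpha(X - T' - N(T'))$.

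The only delicate point is the first paragraph: the whole argument is driven by the two containments $T \subseteq T'$ and $I \subseteq T'$ for the size-$k$ independent set $I$ selected during the extension, so the construction in Lemma~\ref{extlemma} must be presented in a way that makes these properties explicit (in particular, that $I$ lies entirely inside the final set $T'$, not merely that $\alpha(T')$ has gone up by $k$). Once the extension procedure is pinned down that way, the remainder is the short set-theoretic computation above, and there is no genuine combinatorial obstacle.
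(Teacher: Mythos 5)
Your proof is correct and is essentially the paper's argument, just run forwards instead of by contradiction: the paper also combines the $k$ vertices of the extension's independent set $I_k$ that end up inside $T'$ (noting $I_k \subseteq X - N(T)$) with an independent set of $X - T' - N(T')$, via the inequality $\alpha(B) \geq \alpha(B \cap T') + \alpha(B - T' - N(T'))$ for $B = G[X - N(T)]$. The ``delicate point'' you flag is exactly what the paper's proof of Lemma~\ref{extlemma} supplies (at least $k$ vertices of the final $I_k$ are added to $T_k = T'$, and every $I_i$ stays inside $X - N(T_0)$), so no gap remains.
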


\subsection{The $2\alpha(G) - 2$ algorithm}

\newtheorem{2am2lemma}[thmctr]{Claim}

Let $n = \left| V(G) \right|$.  We are going to build a partition $\mathcal{F}$ of $V(G)$ using only a sequence of breaking operations.
At any stage of the algorithm, let $U = V(G) - \cup_{T \in \mathcal{F}} T$.

\medskip

Case $\alpha(G) = 1$:  Note that this conclusion is obvious but we give a detailed argument to make the reader more familiar with the definitions.
The algorithm is to break $V(G)$ by 1.  This is an acceptable operation because before the breaking $\mathcal{F}$ is empty and
every component of $G$ has independence number 1.  Breaking $V(G)$ by 1 does not color any edges red because the family before the breaking is empty,
and so the breaking results in a family of singleton sets $\mathcal{F} = \left\{ \left\{ v \right\} : v \in V(G) \right\}$ with $G_b = G$.
Theorem~\ref{acceptablegood} shows $G$ is chordal, and using the weight function $f(v) = 1$ we have that the total weight is $n$ and
$\alpha_f(G) = \alpha(G) = 1$.  Thus Theorem~\ref{perfectweight} shows that $\omega(G) \geq n$.

\medskip

Case $\alpha(G) = 2$: We first break $V(G)$ by $2$.  This is
acceptable because before the breaking $\mathcal{F}$ is empty and
every component of $G$ has independence number at most $2$.  No
edges are colored red, and so this breaking results in a family
$\mathcal{F}$ of disjoint induced $P_3$s ($P_3$ is the unique
connected graph on three vertices with independence number 2). This
family $\mathcal{F}$ is maximal because the remaining vertices (the
set $U$) induce a disjoint union of cliques. We next break $U$ by 1.
This is acceptable because each $T \in \mathcal{F}$ dominates $U$ so
$\alpha(U - N(T)) = 0$, and each component of $G[U]$ is a clique.
Also, no edge is colored red because each $P_3$ in $\mathcal{F}$
dominates $U$.  Thus the two breaking operations produce a partition
of $V(G)$ into a maximal family of induced $P_3$s and singleton sets
of the remaining vertices, with all edges colored blue ($G_b = G$).
We now contract each $P_3$ to form the graph $H$, and use the weight
function $f(v) = 3$ for a vertex $v$ obtained by contracting a
$P_3$, and $f(v) = 1$ otherwise.  Thus $f(v)$ records the number of
vertices in the set in $\mathcal{F}$ that is contracted down to $v$,
and the total weight $f(V(H)) = n$. Theorem~\ref{acceptablegood}
shows that $H$ is a chordal graph.  To compute $\alpha_f(H)$, take any
independent set $I$ in $H$.  This independent set corresponds to a
pairwise non-touching subfamily $\mathcal{I}$ of $\mathcal{F}$.
Since no edges are colored red, $\mathcal{I}$ is pairwise
non-touching in $G$. Then either $\mathcal{I}$ contains one $P_3$
and nothing else (in which case $f(I) = 3$) or at most two single
vertices (in which case $f(I) = 2$).  Thus $\alpha_f(H) \leq 3$ so
Theorem~\ref{perfectweight} shows that $\omega(H) \geq \left\lceil
n/3 \right\rceil$, that is we have a complete minor of $G$ of size
at least $\left\lceil n/3 \right\rceil$.

\medskip

Case $\alpha(G) \geq 3$: Initially, $U = V(G)$ and $\mathcal{F} = \emptyset$.
\begin{itemize}
    \item Step 1: Let $C$ be any component of $G[U]$.  If $\alpha(C)$ is 1 or 2, then we break $C$ like in the above two cases.
                  If $\alpha(C) \geq 3$, then we break $C$ by $\alpha(C) - 1$.
    \item Step 2: We now update $U := U - \cup_{T \in \mathcal{F}} T$ and continue Step 1 with a new $C$ until $U = \emptyset$.
\end{itemize}

First, all the breakings are acceptable.  Consider a component $C$ we are about to break in Step 1.  Now consider any
set $T$ that has already been produced, say $T$ was born when $C'$ was broke.  If $C$ is not contained in $C'$ then there are no edges between
$T$ and $C$ so $\alpha(C - N(T)) = \alpha(C)$.  If $C \subseteq C'$, then $\alpha(T) = \alpha(C') - 1$ so that $\alpha(C' - N(T)) \leq 1$ which implies that
$\alpha(C - N(T)) \leq 1$.  Thus breaking $C$ by $\alpha(C) - 1$ is acceptable.
Because $\alpha(C - N(T)) \leq 1$, the only possibility for edges to be colored red in Step 1 is when we choose a component $C$ with $\alpha(C) = 1$.
Thus for each $T \in \mathcal{F}$, we have $G[T] = G_b[T]$.

\medskip

Now consider the graph $H$ formed by starting with $G_b$ and contracting each set of $\mathcal{F}$.  Consider the weight function $f$ on $V(H)$ where
we assign to each vertex of $H$ the size of the set of $\mathcal{F}$ which it came from.  Thus the total weight of $f$ on $H$ is $n$.
By Theorem~\ref{acceptablegood} we know that $H$ is a chordal graph.

Next, we show that $\alpha_f(H) \leq 2 \alpha(G) - 2$.
Consider any independent set $I$ in $H$.  This corresponds to a pairwise non-touching (in $G_b$)
subfamily $\mathcal{I}$ of $\mathcal{F}$.
By Lemma~\ref{sizelemma}, $\left| T \right| \leq 2 \ext(T) - 1$ so that we can bound the total weight of $I$ as follows:
\begin{align*}
    f(I) &= \sum_{T \in \mathcal{I}} \left| T \right|
         \leq 2 \sum_{T \in \mathcal{I}} \ext(T) - \left| I \right|.
\end{align*}
If $\left| I \right| = 1$ then the largest breaking we ever do is by $\alpha(G) - 1$ which produce
sets with $\ext(T) \leq \alpha(G) - 1$ which have size at most $2 \alpha(G) - 3$.
So assume $\left| \mathcal{I} \right| \geq 2$.
\begin{2am2lemma} \label{2am2lemma}
  For any pairwise non-touching family $\mathcal{I}$ in $G_b$, $\sum_{T \in \mathcal{I}} \ext(T) \leq \alpha(G)$.
\end{2am2lemma}

\begin{proof}
Define $\mu(\mathcal{I})$ to be the total number of red edges between sets of $\mathcal{I}$.
Assume we have a counterexample to Lemma~\ref{2am2lemma} where $\mu(\mathcal{I})$ is minimized.  In other words, a pairwise non-touching family $\mathcal{I}$ in $G_b$
where $\sum_{T \in \mathcal{I}} \ext(T) > \alpha(G)$ and $\mu(\mathcal{I})$ is minimized.
If $\mu(\mathcal{I}) = 0$, then $\mathcal{I}$ is a pairwise non-touching family in $G$ so that $\sum_{T \in \mathcal{I}} \alpha(T) \leq \alpha(G)$.
By Lemma~\ref{sizelemma}, $\ext(T) \leq \alpha(T)$ so $\sum_{T \in \mathcal{I}} \ext(T) \leq \alpha(G)$.

Assume now that $\mu(\mathcal{I}) \geq 1$ and take some $T, R \in \mathcal{I}$ where there is a red edge between $T$ and $R$.
We will produce a subfamily $\mathcal{I}'$ spanning fewer red edges.
For edges to be colored red one of $T$ or $R$ must be a single vertex.  Assume $\left| R \right| = 1$,
and let $C$ be the component containing $R$ (with $\alpha(C) = 1$) chosen in Step 1 which caused the edges between $T$ and $R$ to be colored red.
Thus $\alpha(C - N(T)) = 1$ so there exists a vertex $v \in V(C) - N(T)$.  Let $\mathcal{I}' = \mathcal{I} - R + \left\{ v \right\}$.
Note that since $v \in V(C)$ and $\alpha(C) = 1$, $\left\{ v \right\} \in \mathcal{F}$.
We now show that $v$ does not touch any other set in $\mathcal{I}'$.  Say that there exists an $S \in \mathcal{I} \cap \mathcal{I}'$
where $S$ touches $v$ in $G$.
Since $v \in V(C) - N(T)$, we must have $T \neq S$.
First assume $\ext(S) = 1$, so that $S = \left\{ s \right\}$ for some vertex $s$.  Then since $s$ touches $v$ we must have $s \in V(C)$.
Note that when singletons are born, their component must be a clique.
But then $s$ touches $R$ using a blue edge, contradicting that $S \in \mathcal{I}$.  So we can assume $\ext(S) \geq 2$.

First assume $T$ is indexed lower than $S$, and let $C'$ be the
component chosen in Step 1 when $T$ was born. Then $\alpha(T) \geq
\ext(T)$ and $\ext(T)$ is one plus the number of extensions during
the breaking so $\ext(T) = \alpha(C') - 1$. Since $S$ touches $v$
and $T$ is connected to $v$ by a path of length 2 using a vertex of
$C$ we have that $S$ is contained inside $C'$.  Since $\alpha(S)
\geq \ext(S) \geq 2$ we must have $T$ touching $S$ using blue edges,
contradicting that both are in $\mathcal{I}$. Now assume that $S$ is
indexed lower than $T$, and let $C'$ be the component chosen in Step
1 when $S$ was born. Then $\alpha(S) \geq \ext(S) = \alpha(C') - 1$
and since $S$ touches $v$ and $T$ is connected to $v$ by a path of
length 2 using a vertex of $C$ we have that $T$ is contained inside
$C'$. Since $\alpha(T) \geq \ext(T) \geq 2$ we have that $S$ touches
$T$ using blue edges, contradicting that both are in $\mathcal{I}$.
Thus $v$ does not touch any other set in $\mathcal{I}'$, so
$\mathcal{I}'$ is pairwise non-touching in $G_b$ and we have reduced
the number of red edges.  Also, $\sum_{T \in \mathcal{I}'} \ext(T) =
\sum_{T \in \mathcal{I}} \ext(T) > \alpha(G)$ contradicting that
$\mathcal{I}$ was a minimum counterexample. \end{proof}

Using Claim~\ref{2am2lemma}, we can immediately complete the proof
since then $f(I) \leq 2 \alpha(G) - \left| I \right| \leq 2
\alpha(G) - 2$. To summarize, we can find a complete minor of $G$ of
size $\left\lceil r \right\rceil$, where $r$ is defined as in
Theorem~\ref{alpha2a2thm}. \qed

\subsection{The $2 \alpha(G) - \logbound{\alpha(G)})$ algorithm}

Given a graph $G$, we use the operations of breaking and extending to produce a partition $\mathcal{F}$ of $V(G)$ and a spanning subgraph
$G_b$ of blue edges.
When we start the algorithm, $\mathcal{F}$ will be the empty family and $G_b = G$.
The improvement from $2\alpha(G) - 2$ to $2 \alpha(G) - \logbound{\alpha(G)}$ comes from breaking each component $C$ by
$\left\lceil (\alpha(C) + 1)/2 \right\rceil$
so we produce sets of size approximately $\alpha(C)$, and then we extend the sets of $\mathcal{F}$ before future breakings only if
it would prevent the breaking from being acceptable.

\medskip

Given a graph $G$, set $G_b = G$ so all edges are colored blue and set
$\mathcal{F} = \emptyset$.

We pick $C$ to be any component of $G[V(G) - \cup_{T \in \mathcal{F}} T]$.
If $\alpha(C) = 1$, we break $C$ by 1 which constitutes Step $C$.  So assume $\alpha(C) \geq 2$, and run the following substeps inside $C$, which
constitutes Step $C$.

\begin{itemize}
  \item Substep 1: For each $T \in \mathcal{F}$ with $\alpha(C - N(T)) = \alpha(C)$, color all edges between $T$ and $C$ red.
        Then let $b = \left\lceil \frac{\alpha(C) + 1}{2} \right\rceil$, and let $A = V(C)$.  Partition $\mathcal{F}$ into three classes.
\begin{itemize}
 \item $\mathcal{H}_0 =
        \left\{ T \in \mathcal{F} : \text{ all edges between } T \text{ and } C \text{ are colored red} \right\}$,
 \item $\mathcal{H}_1 = \left\{ T \in \mathcal{F} - \mathcal{H}_0 : \alpha(C - N(T)) < \sqrt{2} (b - 1) \right\}$,
 \item $\mathcal{H}_2 = \mathcal{F} - \mathcal{H}_0 - \mathcal{H}_1$.
\end{itemize}

\end{itemize}

\begin{itemize}
    \item Substep 2: For any $T \in \mathcal{H}_1$ and any component $D$ of $G[A]$ with
     $b \leq \alpha(D - N(T)) < \alpha(D)$,
        we extend $T$ into $V(D)$ by $\alpha(D - N(T)) - b + 1$.
        We then update $A := A - T$ and continue Substep 2 until no pair $T,D$
        satisfies $b \leq \alpha(D - N(T)) < \alpha(D)$.
        Note, for the first $T$ selected during Substep 2 we will have $D = C$ so that $T$
         satisfies $b \leq \alpha(C - N(T)) < \alpha(C)$ and
        thus is extended.
\end{itemize}

If there exists some $T \in \mathcal{H}_1$ which was not extended during Substep 2 and some
component $D$ of $G[A]$ such that $b \leq \alpha(D - N(T)) = \alpha(D)$ then we do not continue to Substep~3, instead we
finish Step~$C$.
Otherwise, continue to Substep 3.

\begin{itemize}
    \item Substep 3: For any $T \in \mathcal{H}_2$ and any component $D$ of $G[A]$ with $b \leq \alpha(D - N(T)) < \alpha(D)$,
        we extend $T$ into $V(D)$ by $\alpha(D - N(T)) - b + 1$.
        We then update $A := A - T$ and continue Step 3 until no pair $T,D$ satisfies $b \leq \alpha(D - N(T)) < \alpha(D)$.

    \item Substep 4: Break $A$ by $b$.
\end{itemize}

If $\mathcal{F}$ is not yet a partition of $V(G)$, pick a new component $C$.

\medskip

In Sections 3 and 4, we prove that using this algorithm we can find a complete minor of $G$ of size $\left\lceil n/r \right\rceil$, where
$r$ is defined in Theorem \ref{mainlogthm}.

\section{Analysis of the operations}

\subsection{Proofs of Theorems \ref{perfectweight} and \ref{acceptablegood}}

\newtheorem{pseopreserved}[thmctr]{Theorem}
\newtheorem{accbreakpreservesclean}[thmctr]{Lemma}
\newtheorem{accextpreservesclean}[thmctr]{Lemma}
\newtheorem{cleanischordal}[thmctr]{Lemma}

If $V(G) = \left\{ v_1, \ldots, v_n \right\}$ and $H_1, \ldots, H_n$ are pairwise disjoint graphs, then the
\textbf{composition} $G[H_1, \ldots, H_n]$ is the graph formed by the vertex disjoint union of $H_1, \ldots, H_n$
 plus the edges
$xy$ where $x \in V(H_i), y \in V(H_j)$ and  $v_i v_j \in E(G)$.  In 1972, Lov\'{a}sz \cite{lovasz72} proved
that a composition of
perfect graphs is perfect.

\begin{proof}[Proof of Theorem \ref{perfectweight}]
    First, we modify $f$ by multiplying each weight by their common denominator so that $f: V(H) \rightarrow \mathbb{Z}^{+}$.
    Multiplying every weight by a constant does not change $f(V(H)) / \alpha_f(H)$.
    For $v \in V(H)$, define $H_{v} = f(v) K_1$ to be an independent set of size $f(v)$.
    Then define $H' = H[H_{v_1}, \ldots, H_{v_n}]$ as a composition of $H$.
    Then $f(V(H)) = \left| V(H') \right|$, $\omega(H) = \omega(H')$, and $\alpha_f(H) = \alpha(H')$.
    (If $I'$ is a maximal independent set in $H'$, then either $H_v \subseteq I'$ or $H_v \cap I' = \emptyset$ because $H_v$ is an
    independent set and every vertex in $H_v$ has the same neighborhood.)
    Since $H'$ is a perfect graph, we have
    \begin{align*}
        \omega(H) = \omega(H') = \chi(H') \geq \left\lceil \frac{\left| V(H') \right|}{\alpha(H')} \right\rceil =
          \left\lceil \frac{f(V(H))}{\alpha_f(H)} \right\rceil. \,
      \end{align*}
\end{proof}

We say that $\mathcal{F} = \left\{ T_1, \ldots, T_k \right\}$ is a
\textbf{partial simplicial elimination ordering} in $G$ if for every
non-touching pair $T_i, T_j$ with $i < j$ and for every component
$C$ of $G - T_1 - \ldots - T_j$, at most one of $T_i$ or $T_j$
touches $C$.  This corresponds exactly to a partial simplicial
elimination ordering in the graph obtained by contracting each set
of $\mathcal{F}$. We first prove that using acceptable operations we
get a partial simplicial elimination ordering in the blue subgraph.

\begin{pseopreserved} \label{pseopreserved}
  Let $\mathcal{F}_0$ be a partial simplicial elimination ordering in $G$, and let $\mathcal{F}$ be any family formed by starting
  with $\mathcal{F}_0$ and performing any sequence of acceptable operations.  Let $G_b$ be the spanning subgraph of blue edges after the
  operations.  Then $\mathcal{F}$ is a partial simplicial elimination ordering in $G_b$.
\end{pseopreserved}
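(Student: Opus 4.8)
The plan is to induct on the number of acceptable operations used to obtain $\mathcal{F}$ from $\mathcal{F}_0$; the base case (zero operations, $\mathcal{F}=\mathcal{F}_0$, $G_b=G$) is the hypothesis. For the inductive step I would isolate the effect of a single operation in two lemmas, one for ``extending $T$ into $X$ by $k$'' and one for ``breaking $X$ by $k$''. Two cheap structural observations set up both cases. First, every red edge is created only in Step~(a) of some breaking, between a set of $\mathcal{F}$ and a component of $G[U]$; since sets of $\mathcal{F}$ only ever grow, every red edge permanently has an endpoint inside a set, so $G_b[U]=G[U]$ at all times. Second, deleting vertices or blue edges from $G_b$ can only split the components of any graph of the form ``$G_b$ minus some initial sets'', never merge them, and touching is monotone under passing to a subgraph; hence any non-touching pair $\{T_i,T_j\}$ both of whose sets are left unchanged by the current operation automatically keeps the partial-simplicial-elimination condition, since its relevant components only shrink. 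So in each case it suffices to check the pairs involving a set that the operation created or enlarged.

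\textbf{Extension.} Here $G_b$ and the ordering are unchanged and one set $T=T_\ell$ is replaced by $T'=T\cup Y$ with $Y\subseteq X\subseteq U$ and $G_b[T']$ connected. The key idea is a \emph{trace-back}: whenever $T'$ touches a component $C$ of $G_b-T_1'-\cdots-T_j'$ with $\ell\le j$, the original $T$ already touched the component $C^\ast$ of $G_b-T_1-\cdots-T_j$ that contains $C$ (and $C\subseteq C^\ast$ since $T'\supseteq T$). Indeed a touching of $T'$ to $C$ is either a blue edge out of $T$ --- which witnesses $T$ touching $C^\ast$ --- or a blue edge out of some $y\in Y$ to $C$; in the latter case one walks a blue path from $y$ to $T$ inside the connected graph $G_b[T\cup Y]$, notes that all of its vertices outside $T$ lie in $Y\subseteq U$ and are therefore undeleted in $G_b-T_1-\cdots-T_j$, and concludes that the last such vertex before the path enters $T$ lies in $C^\ast$ and is blue-adjacent to $T$. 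Given a non-touching pair $\{T',R\}$, the pair $\{T,R\}$ was non-touching in $G_b$ (as $T\subseteq T'$), so by the inductive hypothesis at most one of $T,R$ touches $C^\ast$; combined with the trace-back this forces $R$ not to touch $C\subseteq C^\ast$, so at most one of $T',R$ touches $C$. A tiny case split according to whether $T'$ or $R$ has the smaller index completes the extension lemma.

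\textbf{Breaking.} I would view ``breaking $X$ by $k$'' as Step~(a) (recoloring certain blue edges red) followed by a sequence of moves ``append a new singleton set and extend it'', treating each such move atomically and always inserting the new sets last in the order. Step~(a) is harmless: it only deletes blue edges between a set of $\mathcal{F}$ and vertices of $U$, so it changes no touching relation between two sets, and by the monotonicity observation the partial-elimination condition survives. For the new sets one uses that each newly born set $S$ satisfies $\alpha(S)\ge k$, while each component $D$ of $G[X]$ satisfies $\alpha(D)<2k$ and, by the first structural fact, is a full component of $G_b[U]$ --- a ``closed region'' with no edge (red or blue) leaving it within $U$, and with every relevant ancestor-component lying inside $U$. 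One then checks: (1) any component touched by a new set $S$ lies entirely in the closed region $D\supseteq S$; (2) two new sets in distinct regions $D\neq D'$ satisfy the condition trivially, since a component touched by the first lies in $D$ and one touched by the second lies in $D'$; (3) two new sets born in the same region are $G$-adjacent --- the source component of the later one sits in a component of $D-S$ that meets $N(S)$, the other components of $D-S$ having independence number $<k$ by Lemma~\ref{extdecalpha} together with $\alpha(D)<2k$ and so spawning nothing, and the forced $G$-edge between the two sets is blue because both sets were drawn from $U$; (4) for a pair of an old set $T$ and a new set $S\subseteq D$, acceptability says either the $T$--$D$ edges are all red --- so $T$ is $G_b$-separated from all of $D$, hence from every component that $S$ touches --- or $\alpha(D-N(T))<k\le\alpha(S)$, which forces a $G$-edge from $T$ to $S$, and one must argue this edge is blue so that the pair is touching and the condition is vacuous.

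The main obstacle is point~(4): across the whole run one must control \emph{which} $T$--$D$ edges are red, since a single Step~(a) colors only the edges present at that instant, while a later extension of $T$ could in principle re-create blue $T$--$D$ adjacencies. I expect this forces a strengthened inductive invariant beyond ``$\mathcal{F}$ is a partial simplicial elimination ordering'': namely that for every set $T$ and every component $D$ of $G[U]$ the edges between $T$ and $D$ are all red or all blue (equivalently that the $\mathcal{H}_0/\mathcal{H}_1/\mathcal{H}_2$ trichotomy is well defined), which one verifies is maintained by both operations --- the ``no red edges between $T$ and $X$'' requirement in the extension and the blanket recoloring in Step~(a) are exactly calibrated for this --- and from which (4) follows at once. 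The other delicate point is the bookkeeping in the extension trace-back: making sure the blue path back from $Y$ lands in the correct ancestor component $C^\ast$ rather than a wrong one.
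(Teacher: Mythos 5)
Your proposal follows essentially the same route as the paper's proof, which splits the induction over operations into Lemma~\ref{accbreakpreservesclean} (breaking) and Lemma~\ref{accextpreservesclean} (extension), resolves the breaking case by showing every $G_b$-non-touching pair $\{T_i,R_j\}$ has \emph{all} edges between $T_i$ and the component $D\supseteq R_j$ red, and resolves the extension case by comparing components before and after enlarging $T$. The ``all red or all blue'' invariant you flag in point~(4) is precisely what the paper asserts without comment (``we either had all edges between $T_i$ and $D$ red before the breaking or we colored all edges \dots red during the breaking''), and it does survive later extensions because $X\cap D$ would be $G_b$-disconnected from $T$ inside $G_b[X\cup T]$ when all $T$--$D$ edges are red, so your strengthened induction closes the same gap the paper leaves implicit.
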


\begin{proof}
Let $G_b$ be the spanning subgraph of $G$ of the blue edges at the end of all operations.  We need to prove that after every acceptable
operation we have a partial simplicial elimination ordering.

\begin{accbreakpreservesclean} \label{accbreakpreservesclean}
    Let $\mathcal{F}$ be a partial simplicial elimination ordering in $G_b$.  Let $U$ be the
    set of vertices $V(G) - \cup_{T \in \mathcal{F}} T$, and let $X \subseteq U$ be a union of some components of $G[U]$.
    Let $k$ be an integer such that breaking $X$ by $k$ is an acceptable operation.
    Then the family obtained by breaking $X$ by $k$ is a partial simplicial elimination ordering in $G_b$.
\end{accbreakpreservesclean}

\begin{proof}
    Let $\mathcal{F} = \left\{ T_1, \ldots, T_m \right\}$ be the original family,
    and let $R_1, \ldots, R_{\ell}$ be the sets produced when we broke $X$ by $k$.
    We consider a non-touching pair in $G_b$, and show that the pair satisfies the
    condition for a partial simplicial elimination ordering.
    We only need to consider pairs which contain at least one $R_j$.

    Let $T_i,R_j$ be a non-touching pair in $G_b$.  Let $D$ be the component of $G[X]$
    containing $R_j$ (then $D$ is also a component of $G[U]$).  We first show that all edges between $T_i$ and $D$ are
    colored red.

    First assume $T_i$ does not touch $R_j$ in $G$.
    Then $k = \alpha(R_j) \leq \alpha(D - N(T_i))$ so by the
    condition in the definition of acceptable breaking we have $\alpha(D - N(T_i)) = \alpha(D)$ or all edges between $T_i$ and $D$ are colored
    red, thus after the breaking all edges between $T_i$ and $D$ are red.

    Now assume the edges between $T_i$ and $R_j$ are red.
    Then we either had all edges between $T_i$ and $D$ red before the breaking or we colored all edges between
    $T_i$ and $D$ red during the breaking.  Thus we have all the edges between $T_i$ and $D$ colored in red.

    Let $C$ be any component of
    $G_b - T_1 - \ldots - T_m - R_1 - \ldots - R_j$.  We want to show that at least one
    of $T_i$ or $R_j$ does not touch $C$ using blue edges.
    $C$ is either contained inside $D$ or disjoint from $D$, because $D$ is a component
    of $G[U] = G - T_1 - \ldots - T_m$ and $C$ is a connected subgraph of $G[U]$.
    If $C$ is disjoint from $D$ then $R_j$ does not touch $C$ in $G$.  If
    $C$ is contained inside $D$, then $T_i$ does not touch $C$ using blue edges because all edges between $T_i$
    and $C$ are red.

    Now consider a non-touching pair $R_i,R_j$ in $G_b$ with $i < j$.
    Assume $R_i$ and $R_j$ are contained in the same component $D$ of $G[X]$.  Then since $2k > \alpha(D)$ we must have
    $R_i$ touching $R_j$ in $G$ so touching in $G_b$ (we only color edges red which have exactly one endpoint in an existing set).
    Thus we must have $R_i$ and $R_j$ in different components of $G[X]$.
    So let $C$ be any component of $G_b - T_1 - \ldots - T_m - R_1 - \ldots - R_j$, so that $V(C) \subseteq U$.  Then
    $C[V(C) \cap X]$ is contained inside some component of $G_b[X]$ so $C[V(C) \cap X]$ cannot touch both $R_i$ and
    $R_j$ in $G_b$.  Since there are no edges between $X$ and $U - X$ in $G$, if $R_i$ has no edges to $C[V(C) \cap X]$ then $R_i$ has no edges
    to $C$ and similarly if $R_j$ has no edges to $C[V(C) \cap X]$.  Thus at most one of $R_i$ or $R_j$ touches $C$ using blue edges.
\end{proof}

\begin{accextpreservesclean} \label{accextpreservesclean}
    Let $\mathcal{F}$ be a partial simplicial elimination ordering in $G_b$.
    Let $X \subseteq V(G) - \cup_{T \in \mathcal{F}} T$ where $G_b[X]$ is connected, and let $T_i$ be an element of $\mathcal{F}$.
    Then the family obtained by extending $T_i$ into $X$ is still a partial simplicial elimination ordering in $G_b$.
\end{accextpreservesclean}

\begin{proof}
    Let $\mathcal{F} = \left\{ T_1, \ldots, T_m \right\}$ before the extension,
    and let $T'$ be the set $T_i$ plus the vertices added during the extension.  Now
    consider a $T_j \in \mathcal{F}$ where $T_j$ does not touch $T'$ in $G_b$, let $\ell = \max\left\{ i, j \right\}$ and
    consider any component $C$ of $G - T_1 - \ldots - T_\ell - T'$.
    Let $D$ be the component of $G - T_1 - \ldots - T_\ell$ which contains $C$.
    Because $\mathcal{F}$ is a partial simplicial elimination ordering, at least one of $T_i$ or $T_j$ does not touch $D$ using blue edges.
    Since $G[X]$ is connected, $X$ is either contained inside $V(D)$ or disjoint from $V(D)$.
    If $X$ is not contained inside $V(D)$, then $D = C$ and at least one of $T_i$ or $T_j$ does not touch $C$ using blue edges.  Extension does
    not change this, so one of $T'$ or $T_j$ does not touch $C$ using blue edges.
    If $X$ is contained inside $V(D)$, then $T_i$ touches $D$ using blue edges
    ($T_i$ touches the new vertices in $T'$ and we only extend using blue edges)
    so $T_j$ does not touch $D$ using blue edges so does not touch $C$ using blue edges.
\end{proof}

\noindent Clearly, Lemma~\ref{accbreakpreservesclean} and Lemma~\ref{accextpreservesclean} imply Theorem~\ref{pseopreserved}.
\end{proof}

\begin{proof}[Proof of Theorem \ref{acceptablegood}]
    Assume that $\mathcal{F}$ is formed by acceptable operations.
    Initially, we have that $\mathcal{F}_0 = \emptyset$ which is trivially a partial simplicial elimination ordering.
    Then by Theorem~\ref{pseopreserved}, $\mathcal{F}$ is a partial simplicial elimination ordering in $G_b$.
    Let $H$ be the graph obtained from $G_b$ by contracting each $T_i \in \mathcal{F}$ into a single vertex $v_i \in V(H)$.

    We show that $H$ is chordal by giving a simplicial elimination ordering of $H$.
    We order the vertices of $H$ according to the ordering of the sets of $\mathcal{F}$.
    For each $v_i \in V(H)$, define $B_i = N(v_i) \cap \left\{ v_1, \dots, v_i \right\}$.
    Assume we had $v_j, v_k \in B_i$ with $j < k < i$
    where $v_j v_k \notin E(H)$.  Let $D$ be the component of $G_b - T_1 - \ldots - T_k$ which contains $T_i$.
    Then $T_j$ does not touch $T_k$ in $G_b$, so by the condition on partial simplicial elimination ordering one of $T_j$ or $T_k$
    does not touch $D$ in $G_b$.  This contradicts that $v_j, v_k \in B_i$, so $B_i$ spans a clique.
    This happens for each $i$, yielding that $H$ is a chordal graph.
\end{proof}

\subsection{Some properties of the operations}
\newtheorem{extlemma}[thmctr]{Lemma}
\newtheorem{extduringbreaking}[thmctr]{Lemma}

In this subsection, let $G$ be any graph and $G_b$ any spanning subgraph of $G$.
Let $T,X \subseteq V(G)$ and $k$ any integer with
$T \cap X = \emptyset$, $G_b[T]$ connected, $G_b[X \cup T]$ connected, no red edges between $T$ and $X$, and $k \leq \alpha(X - N(T))$.
(These are the conditions when we extend $T$ into $X$ by $k$ during the algorithm.)

\begin{extlemma} \label{extlemma}
    It is possible to extend $T$ into $X$ by $k$ such that $G_b[T]$ remains connected
    and $\alpha(T)$ increases by at least $k$ and $\left| T \right|$ increases by at most $2k$.
\end{extlemma}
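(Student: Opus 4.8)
The plan is to prove the statement constructively, by induction on $k$, the case $k=0$ being vacuous. First I would run a BFS from $T$ in $G_b[X\cup T]$ and let $L_i$ be the $i$-th layer. Since there are no red edges between $T$ and $X$, a vertex of $X$ lies in $N(T)$ precisely when it lies in $L_1$, so $X-N(T)=L_2\cup L_3\cup\cdots$; as $\alpha(X-N(T))\ge k\ge 1$ this is nonempty, hence $L_2\ne\emptyset$. (Throughout I use that inside $X$ every edge of $G$ is blue, which holds whenever the lemma is applied, so that $G_b$-shortest paths inside $X\cup T$ are induced paths of $G$, consecutive vertices lying in consecutive layers.) If some vertex of $X$ is at BFS-distance $\ge 2k$ from $T$, take a shortest path $p_0\in T,p_1,\dots,p_{2k}$ and add $p_1,\dots,p_{2k}$ to $T$: then $\{p_2,p_4,\dots,p_{2k}\}$ is an independent set of size $k$ avoiding $N(T)$, so appended to a maximum independent set of $G[T]$ it raises $\alpha(T)$ by $k$, while connectivity is preserved because the path lies in $G_b$. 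That settles this case, and in particular handles all $k\le 1$.

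Otherwise $X-N(T)\subseteq L_2\cup\cdots\cup L_D$ with $D\le 2k-1$ (so $k\ge 2$). Here I would pick a maximum independent set $I^*$ of $G[X-N(T)]$, $|I^*|\ge k$, minimising $\sum_{u\in I^*}\operatorname{dist}(u,T)$, and let $v\in I^*$ lie at the least depth $d\le D$, with $p_0\in T,p_1,\dots,p_d=v$ a shortest path. Let $u_1,\dots,u_m$ be the vertices of $I^*$ adjacent to $p_{d-1}$; then $m\ge 1$ ($v$ is one), and the crucial observation is that $m\ge 2$ once $d\ge 3$: otherwise $p_{d-1}$, which then lies in $X-N(T)$, would have $v$ as its only neighbour in $I^*$, and $(I^*\setminus\{v\})\cup\{p_{d-1}\}$ would be a maximum independent set of strictly smaller total depth. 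One checks that all $u_i$ lie in $L_d$ and that the interior $p_2,\dots,p_{d-2}$ is an induced path avoiding $N(T)$ with no vertex adjacent to any $u_i$; let $Q$ be a maximum independent set of it, so $|Q|=\lceil(d-3)/2\rceil$ and $\{u_1,\dots,u_m\}\cup Q$ is independent and avoids $N(T)$.

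Now set $g=\min(k,m+|Q|)$, so $g\ge 1$, and $g\ge 2$ once $d\ge 3$. If $g\le|Q|$ then necessarily $g=k$ and $2g\le d-2$, so I would add $p_1,\dots,p_{2g}$ and gain $\{p_2,p_4,\dots,p_{2g}\}$, finishing with $2k$ added vertices. Otherwise I would add $p_1,\dots,p_{d-1}$ together with $u_1,\dots,u_{g-|Q|}$, gaining $Q\cup\{u_1,\dots,u_{g-|Q|}\}$; the number of added vertices is $(d-1)+(g-|Q|)$, which is at most $2g$ precisely because $d-1-|Q|\le g$, and it is in verifying this inequality that $m\ge 2$, the parity of $d$, and the bound $D\le 2k-1$ all get used. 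If $g=k$ we are done; if $g<k$ (so $g=m+|Q|$), then $I^*\setminus\{u_1,\dots,u_m\}$, of size $\ge k-g$, still lies in $X'-N(T')$ — the $u_i$ not added are adjacent to the added vertex $p_{d-1}$, and the removed path-vertices together with their neighbourhoods meet $I^*$ only in the $u_i$ — so $T',X',k-g$ again satisfy the hypotheses, $k-g<k$, and the induction hypothesis contributes at most $2(k-g)$ further vertices, for a total $(d-1)+(g-|Q|)+2(k-g)\le 2k$.

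The genuine work is the accounting in the last paragraph. That $G_b[T]$ stays connected is automatic, since at every step we only move vertices of $X$ into $T$ without changing the vertex set $X\cup T$; that no red edge is reintroduced is automatic, since every vertex involved lies in $X$; and the conclusion of Lemma~\ref{extdecalpha} that $\alpha(X-N(T))$ drops by at least $k$ falls out of the same bookkeeping, because any independent set surviving in $X'-N(T')$ can be enlarged by the gained independent set. What is delicate is keeping the ratio of added vertices to independence gained at most $2$: a lone induced path down to depth $d$ yields only $\lceil(d-1)/2\rceil$ independent vertices, which falls one short of what is needed when $d$ is odd, and the choice of $I^*$ of minimum total depth is exactly what repairs this by forcing the extra neighbour $u_2$. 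I expect this parity check, together with the $g<k$ "overshoot" subcase, to be the only real obstacle.
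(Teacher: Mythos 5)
Your construction is correct --- I checked the accounting that carries the argument: the inequality $d-1-\left|Q\right|\le g$ holds in the branch $g=m+\left|Q\right|$ because $m\ge 2$ once $d\ge 3$ (and $m\ge1$ for $d=2$), and in the branch $g=k$ because $d\le 2k-1$ gives $d-1-\left|Q\right|\le\left\lceil d/2\right\rceil\le k$; the recursion invariants ($X'\cup T'$ unchanged, $I^*\setminus\{u_1,\dots,u_m\}\subseteq X'-N(T')$ since vertices of $I^*$ sit in layers $\ge d$ while $p_1,\dots,p_{d-2}$ only see layers $\le d-1$) also go through. But your route is a genuinely different implementation from the paper's. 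The paper takes a \emph{maximal} independent set $I_i$ in $X-N(T)$ and uses maximality to force every shortest $G_b$-path from $T_i$ to $I_i$ to have length at most $3$; each iteration then adds at most three vertices, and the $2$-for-$1$ ratio comes from the penultimate path vertex having at least two neighbours in $I_i$, or else from a local swap $I_{i+1}=I_i-p_4+p_3$. You never cap the path length: the deep case ($\ge 2k$) is settled by harvesting alternate vertices of one long induced path, and in the shallow case the paper's local swap is replaced by a global extremal choice --- a maximum independent set of minimum total BFS-depth --- which is what forces $m\ge 2$; the overshoot is absorbed by recursion rather than iteration. The paper's version avoids all the $\left\lceil (d-3)/2\right\rceil$ bookkeeping but must re-maximalize $I_i$ inside the shrinking graph $G[X-T_i-N(T_i)]$ at every step; yours is self-contained per call and makes the ``at most $2k$ vertices for $k$ units of independence'' invariant explicit. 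Two shared caveats, neither fatal: both proofs implicitly use $G[X]=G_b[X]$ (the lemma's hypotheses forbid red edges only between $T$ and $X$, not inside $X$), which you flag and which the paper verifies at each application (Lemmas~\ref{extduringbreaking} and~\ref{logext}); and since Lemma~\ref{extdecalpha} refers to ``the procedure in Lemma~\ref{extlemma}'' by name, one must check its proof survives the substitution --- it does, because it only needs $T'$ to absorb an independent $k$-subset of $X-N(T)$, and the union of your gained sets over the recursion is such a set (each level's gain avoids the neighbourhood of everything added earlier).
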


\begin{proof}
    Let $T_0 = T$ so $T_0$ is the initial $T$.  We use the following algorithm to produce $T_0 \subseteq T_1 \subseteq  \ldots \subseteq T_k$,
    where $\alpha(T_i) \geq \alpha(T_0) + i$ and $\left| T_i \right| \leq \left| T_0 \right| + 2i$.
    (Note that we do not define $T_i$ for every $i<k$.) Initially, let $I_0$ be any maximal
     independent set in $G[X - N(T_0)]$ with $\left| I_0 \right| \geq k$.

    Assume we have defined $T_i$ and $I_i$ with $i < k$.  We now show how to define $T_{i+r}$
    and $I_{i+r}$ for some $1 \leq r \leq k - i$.

    \medskip

    \noindent Step 1. Choose $P$ to be a shortest path in $G_b[X \cup T_0]$ between $T_i$ and $I_i - T_i$.  The length of $P$ is at most three because
    $I_i$ is a maximal independent set in $G_b[X - N(T_0)]$.
    The algorithm maintains that there are no edges between $T_i$ and $I_i - T_i$ when $i < k$, so the length of $P$ is at least two.

    \begin{figure}
      \centering
      \subfloat[Case 1]{\includegraphics[width=0.31\textwidth]{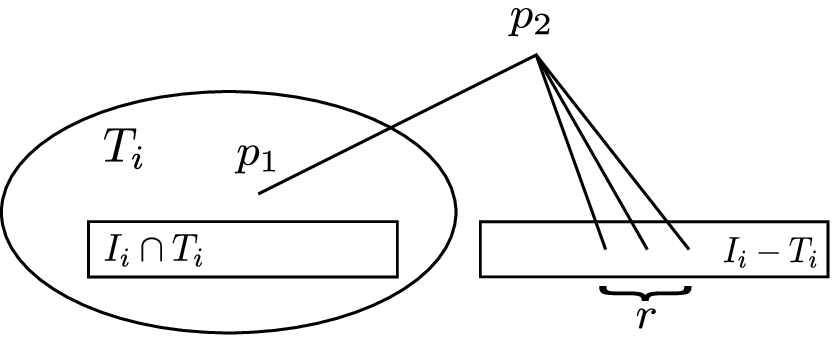}}
      \hspace{0.02in}
      \subfloat[Case 2]{\includegraphics[width=0.31\textwidth]{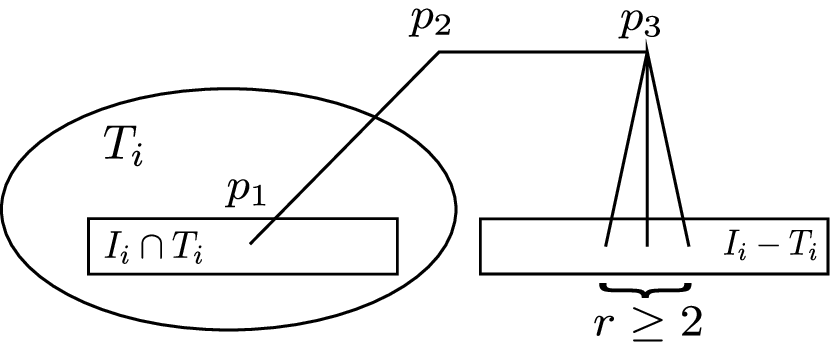}}
      \hspace{0.02in}
      \subfloat[Case 3]{\includegraphics[width=0.31\textwidth]{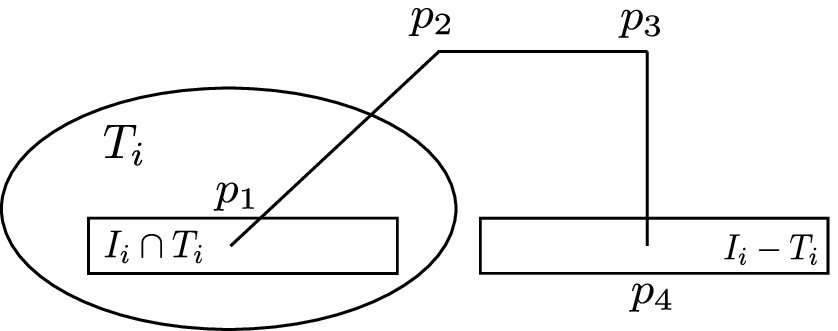}}
      \caption{Extensions of $T_i$.}
      \label{fig:extlemmacases}
    \end{figure}

    \medskip

    \noindent Step 2.
    \begin{itemize}
            \item[Case 1:] Consider when $P = (p_1,p_2,p_3)$ with $p_1 \in T_i$ and $p_3 \in I_i - T_i$.
                Then we add $p_2$ and
                \begin{align*}
                   r = \min \left\{k - i, \left| N(p_2) \cap (I_i - T_i) \right| \right\} \geq 1
                \end{align*}
                vertices from $N(p_2) \cap (I_i - T_i)$ to $T_i$ to form $T_{i + r}$.  Let $I_{i+r} = I_i$.
                Thus $\alpha(T_{i+r}) = \alpha(T_i) + r$ and $\left| T_{i+r} \right| = \left| T_i \right| + 1 + r$.

            \item[Case 2:] Consider when $i \leq k - 2$ and $P = (p_1, p_2, p_3, p_4)$ with
                $p_1 \in T_i$ and $\left| N(p_3) \cap (I_i - T_i) \right| \geq 2$.  Here, we add $p_2, p_3$, and
                \begin{align*}
                   r = \min \left\{ k - i, \left| N(p_3) \cap (I_i - T_i)\right| \right\} \geq 2
                \end{align*}
                vertices of $N(p_3) \cap (I_i - T_i)$ to $T_i$ to form $T_{i+r}$.  Let $I_{i + r}
                \subset G[X - T_i - N(T_i)]$ be a maximal independent set containing $ I_i$.
                Then $\alpha(T_{i+r}) \geq \alpha(T_i) + r$ and $\left| T_{i+r} \right| = \left| T_i \right| + 2 + r$.
                Since $r \geq 2$, the increase in the number of vertices is at most twice the increase of $i$.

            \item[Case 3:] Consider when $P = (p_1, p_2, p_3, p_4)$ with $p_1 \in T_i$ and $N(p_3) \cap I_i = \{ p_4 \}$.  We set
                $I_{i+1} = I_i - \left\{ p_4 \right\} + \left\{ p_3 \right\}$ and then extend $I_{i+1}$
                to be a maximal independent set in
                $G[X - T_i - N(T_i)]$.
                Then $I_{i+1}$ is still a maximal independent set of size at least $k$, and we can now add $p_2$ and $p_3$ to
                $T_i$ to get $T_{i+1}$.  This increases the number of vertices by two and the independence number by one.

            \item[Case 4:] Consider when $i = k - 1$ and $P = (p_1, p_2, p_3, p_4)$ with
                $p_1 \in T_i$ and $\left| N(p_3) \cap I_i \right| \geq 2$.  Here, we add
                $p_2$ and $p_3$ to $T_i$ to get $T_k$.  Let $I_k = I_i - \left\{ p_4 \right\} + \left\{ p_3 \right\}$.
                Thus $\alpha(T_k) \geq \alpha(T_0) + k$ and $\left| T_k \right| \leq \left| T_i \right| + 2$.
    \end{itemize}

    Consider one step which did not produce $T_k$.  If this step is Case 1,
    then we added the entire set $N(p_2) \cap I_i$ to $T_i$.  In Case 2, we added the entire set $N(p_3) \cap I_i$ to $T_i$.
    In Case 3, we added the entire $N(p_2) \cap I_{i+1}$ to $T_i$.  In Case 4, we always produce $T_k$.
    Note that we always maintain that there are no edges between $T_i$ and $I_i - T_i$ if $i < k$.
    If we ever added all vertices of $I_i$ to $T_i$ we would have increased $\alpha(T_i)$ to
    $\alpha(T_0) + k$ because $I_i \subseteq X - N(T_0)$ and $\left| I_i \right| \geq k$.
    We continue the algorithm if $i < k$ so we will eventually produce a $T_k$.
\end{proof}

\begin{extduringbreaking} \label{extduringbreaking}
    The extension of $T = \left\{ v \right\}$ into $D - T$ by $k-1$ during Step (b) of a breaking operation
     satisfies all the conditions of the extension.
\end{extduringbreaking}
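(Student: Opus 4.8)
The plan is to verify, one at a time, the five conditions that any valid extension must satisfy, as listed at the start of this subsection: to extend a set $T$ into a set $X$ by an amount $\ell$ we need $T \cap X = \emptyset$, $G_b[T]$ connected, $G_b[X \cup T]$ connected, no red edges between $T$ and $X$, and $\ell \le \alpha(X - N(T))$. In our situation $T = \left\{v\right\}$, $X = D - \left\{v\right\}$, and $\ell = k - 1$; when $k = 1$ the amount is $0$, the extension adds no vertices, and there is nothing to verify, so we may assume $k \ge 2$.

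The first two conditions are immediate: $T \cap X = \left\{v\right\} \cap (D - \left\{v\right\}) = \emptyset$, and $G_b[T]$ is a single vertex. For the connectivity of $G_b[X \cup T] = G_b[D]$ and the absence of red edges between $v$ and $D - \left\{v\right\}$, I would use the fact (recorded elsewhere in the paper) that a red edge always has exactly one endpoint in a set currently belonging to $\mathcal{F}$; since a vertex placed into a set of $\mathcal{F}$ never leaves it, this means that every edge with both endpoints in $U = V(G) - \bigcup_{T \in \mathcal{F}} T$ is blue at every stage of the algorithm. Both $v$ and all of $D - \left\{v\right\}$ lie in $U$ right up to the moment $\left\{v\right\}$ is added in Step~(b), and the extension procedure itself colors nothing red, so all edges within $D$ are blue; combined with $D$ being a component of $G[X]$ (hence $G[D]$ connected), this gives $G_b[D] = G[D]$ connected and no red edges between $v$ and $D - \left\{v\right\}$.

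For the final condition, recall that in Step~(b) the set $I$ is an independent set of $D$ with $\left| I \right| \ge k$ and $v \in I$. Then $I \setminus \left\{v\right\}$ is an independent set of $G$ of size at least $k - 1$, disjoint from $\left\{v\right\}$, and avoiding $N(v)$ because $I$ is independent; hence $I \setminus \left\{v\right\} \subseteq (D - \left\{v\right\}) - N(v) = X - N(T)$, so $\alpha(X - N(T)) \ge k - 1 = \ell$, as required. The one point that needs a little care is the middle paragraph — keeping track of the fact that no red edge has yet been placed incident to $v$ — but this follows directly from the mechanism by which red edges are introduced, so I anticipate no real obstacle.
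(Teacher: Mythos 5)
Your proposal is correct and follows essentially the same route as the paper's proof: red edges always have an endpoint in a pre-existing set of $\mathcal{F}$, so $G_b[D] = G[D]$ is connected with no red edges between $v$ and $D - \left\{v\right\}$, and $I - \left\{v\right\}$ witnesses $\alpha(D - \left\{v\right\} - N(v)) \geq k-1$. No gaps.
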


\begin{proof}
    Any edge colored red has at least one endpoint in a set of $\mathcal{F}$, so that $G_b[D] = G[D]$ where $G_b$
     is the spanning subgraph of blue
    edges at the time of extension.
    Since $D$ is a component of $G[U]$, we have $G_b[D]$ connected.  Also, since $T \subseteq V(D)$ we have no red edges
    between $T$ and $D-T$ at the time of extension.  Finally, since we chose $v \in I$ where $I$ is an independent
     set of size at least $k$, we have
    $\alpha(D - N(v)) \geq \left| I - v \right| \geq k - 1$.
\end{proof}

\begin{proof}[Proof of Lemma \ref{sizelemma}]
    The extending operation does not produce new sets, so the only way to produce a new set is by breaking some
    set $X$ by $k$.
    In Step (b) of the breaking, we initially have $\left| T \right| = 1$, and then we extend $T$ by $k-1$
    which adds at most $2k - 2$ new vertices, so
    $T$ has at most $2k - 1$ vertices.  Since the independence number increased by at least $k-1$, we have
    $\alpha(T) \geq k$.
    Extending $T$ by $k$ increases the number of its vertices by at most $2k$ and its independence number by at least $k$.
    Thus $\left| T \right| \leq 2 \ext(T) - 1$ and $\alpha(T) \geq \ext(T)$.
\end{proof}

\begin{proof}[Proof of Lemma \ref{extdecalpha}]
    Let $B = G[X - N(T)]$. Assume towards a contradiction that $\alpha(B) - k + 1 \leq \alpha(X - T' - N(T'))$,
    and let $I = I_k$ be the independent set used at the end of the proof of Lemma~\ref{extlemma}.
    Then $\alpha(B) \geq \alpha(B \cap T') + \alpha(B - T' - N(T'))$.
    Since  $B - T' - N(T') = X - T' - N(T')$,
    we have $\alpha(B) \geq \alpha(B \cap T') + \alpha(B) - k + 1$, i.e. $\alpha(B \cap T') < k$.

    But $\left| I \cap T' \right| = k$ because the algorithm added at least $k$ vertices of $I_k = I$ to form $T_k = T'$.
    Since $I \subseteq V(B)$, we have $\alpha(B \cap T') \geq k$, a contradiction.
\end{proof}

\section{The analysis of the $2\alpha(G) - \logbound{\alpha(G)}$ algorithm}

\theoremstyle{definition}
\newtheorem*{biganotation}{Notation}
\theoremstyle{plain}

\newtheorem{finishlog}[thmctr]{Theorem}
\newtheorem{logext}[thmctr]{Lemma}
\newtheorem{bigaisacceptable}[thmctr]{Lemma}
\newtheorem{calcalpha}[thmctr]{Lemma}
\newtheorem{noh1andstep4}[thmctr]{Lemma}
\newtheorem{noh2extandstep4}[thmctr]{Lemma}
\newtheorem{h1h2lessac}[thmctr]{Lemma}
\newtheorem{logindb}[thmctr]{Claim}

\newdef{fprop}
\begin{fprop}
Let $f : \left\{ 0 \right\} \cup \mathbb{R}^{\geq 1} \rightarrow \mathbb{R},
\tau \in \mathbb{R}$
satisfy the following properties:
\begin{itemize}
    \item[P1:] $f(0) = 0$,
    \item[P2:] $f(4\sqrt{2}) \leq 1$,
    \item[P3:] If $1 \leq x \in \mathbb{R}$, then $f(\tau x) \leq 1 + f(x)$,
    \item[P4:] If $1 \leq x, y \in \mathbb{R}$, then $f(2\sqrt{2} x + 2\sqrt{2} y) \leq f(x) + f(y)$,
    \item[P5:] If $0 \leq x \leq y \in \mathbb{Z}$, then $f(y) \leq f(x) + y - x$,
    \item[P6:] If $1 \leq x \leq y \in \mathbb{Z}$ and $1 \leq r \in \mathbb{R}$, then $f(ry) \leq f(rx) + y - x$,
    \item[P7:] $f$ is non-decreasing so by property P4, if $x_1, \ldots, x_k \in \mathbb{R}$ with $x_i \geq 1$, then $f(\sum_i x_i) \leq \sum_i f(x_i)$,
    \item[P8:] If $1 \leq x \in \mathbb{Z}$, then either $\left\lceil \sqrt{2} (x - 1) \right\rceil \geq \frac{\tau}{\tau - \sqrt{2}} x$ or
               $f(2\sqrt{2} x) \leq f(\tau)$.
    \item[P9:] If $2 \leq x \in \mathbb{Z}$, then $f(2 \sqrt{2} x) \leq 1 + f(x - \left\lceil (x+1)/2 \right\rceil)$,
    \item[P10:] If $2 \leq x \in \mathbb{Z}$, then $\sqrt{2} x \leq \tau \left\lceil (x-1)/2 \right\rceil$,
    \item[P11:] If $2 \leq x \in \mathbb{Z}$, then $f(\sqrt{2} x) \leq 2 \left\lceil (x-1)/2 \right\rceil$.
\end{itemize}
We can pick $f(x) = \max\left\{\left\lceil \log_{\tau}(\tau x/(4\sqrt{2})) \right\rceil, 0\right\}$
for $x > 0$ and $f(0) = 0$, where $\tau = 2\sqrt{2}/(\sqrt{2} - 1) \approx 6.83$.
\end{fprop}

\newcommand{\fzero}[0]{P1}
\newcommand{\fzeros}[0]{P1 }
\newcommand{\fbase}[0]{P2}
\newcommand{\fbases}[0]{P2 }
\newcommand{\fpone}[0]{P3}
\newcommand{\fpones}[0]{P3 }
\newcommand{\fptwo}[0]{P4}
\newcommand{\fptwos}[0]{P4 }
\newcommand{\freplone}[0]{P5}
\newcommand{\freplones}[0]{P5 }
\newcommand{\freplr}[0]{P6}
\newcommand{\freplrs}[0]{P6 }
\newcommand{\fsumless}[0]{P7}
\newcommand{\fsumlesss}[0]{P7 }
\newcommand{\ftauineq}[0]{P8}
\newcommand{\ftauineqs}[0]{P8 }
\newcommand{\ftaubases}[0]{P9 }
\newcommand{\ftaus}[0]{P10 }
\newcommand{\fsmall}[0]{P11}

The goal of this section is to prove the following which implies our
main result:
\begin{finishlog} \label{finishlog}
    The algorithm in Section 2.3 produces a complete minor of size \\
     $\left\lceil n / (2 \alpha(G) - f(2 \sqrt{2} \alpha(G))) \right\rceil$.
\end{finishlog}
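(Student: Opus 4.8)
The plan is to run the algorithm of Section~2.3, show that every operation it performs is acceptable, invoke Theorem~\ref{acceptablegood} to conclude that the graph $H$ obtained by contracting each set of the resulting partition in the blue subgraph $G_b$ is chordal, hence perfect, and then put a weight function on $V(H)$ and apply Theorem~\ref{perfectweight}. The first task is the acceptability check. The only breakings the algorithm performs are ``break $C$ by $1$'' (handled exactly as the $\alpha=1$ case of Section~2.2, and trivially acceptable) and ``break $A$ by $b$'' in Substep~4 with $b=\lceil(\alpha(C)+1)/2\rceil$. For the latter I would verify the two conditions in the definition of acceptable breaking: the size condition $\alpha(D)<2b$ holds for each component $D$ of $A$ because $D\subseteq C$ and $2b>\alpha(C)\ge\alpha(D)$; and the coloring condition is precisely what Substeps~1--3 arrange. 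Substep~1 puts into $\mathcal H_0$ every $T$ whose edges to $C$ are (or get) entirely colored red, leaving $\mathcal H_1\cup\mathcal H_2$; Substeps~2 and~3 then extend each $T\in\mathcal H_1$, and afterwards each $T\in\mathcal H_2$, into every still-bad component $D$ (one with $b\le\alpha(D-N(T))<\alpha(D)$) by exactly $\alpha(D-N(T))-b+1$, which by Lemma~\ref{extdecalpha} forces $\alpha(D-N(T))<b$ afterwards. Here I would use Lemma~\ref{extlemma} (the extension is possible, keeps $G_b[T]$ connected, and raises $\alpha(T)$ by the prescribed amount), Lemma~\ref{accextpreservesclean}, and the ``all-or-nothing'' invariant that for every $T\in\mathcal F$ and every component of the current $U$ either all or none of the edges between them are red, so that the extension preconditions (no red $T$--$D$ edges, $G_b[T\cup D]$ connected, amount $\le\alpha(D-N(T))$) are met. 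The one subtlety is the early-termination case: if after Substep~2 some unextended $T\in\mathcal H_1$ still has a component $D$ with $b\le\alpha(D-N(T))=\alpha(D)$, then Step~$C$ ends before Substep~4, nothing new is broken, and there is nothing to check --- the family remains a partial simplicial elimination ordering in $G_b$ by Lemma~\ref{accextpreservesclean}. These facts are the substance of the lemmas about the operations declared at the start of Section~4.

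With acceptability in hand, Theorem~\ref{acceptablegood} gives that $H$ is chordal, hence perfect, and a clique of $H$ pulls back to a complete minor of $G$ since each set of $\mathcal F$ spans a connected subgraph of $G_b$. I would then weight the vertex $v$ of $H$ coming from $T\in\mathcal F$ by $w(v)=|T|$, so $w(V(H))=n$; by Theorem~\ref{perfectweight} it then suffices to prove $\alpha_w(H)\le 2\alpha(G)-f(2\sqrt2\,\alpha(G))$. Since an independent set of $H$ is the same thing as a subfamily $\mathcal I\subseteq\mathcal F$ that is pairwise non-touching in $G_b$, the whole theorem reduces to the single inequality
\begin{equation*}
  \sum_{T\in\mathcal I}|T|\ \le\ 2\alpha(G)-f(2\sqrt2\,\alpha(G))
  \qquad\text{for every $\mathcal I$ pairwise non-touching in $G_b$,}
\end{equation*}
which is the analogue of Claim~\ref{2am2lemma}. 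This is where the improvement over the bound $2\alpha(G)-2$ of Theorem~\ref{alpha2a2thm} actually comes from, and I expect it to be the main obstacle.

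To prove that inequality I would induct along the recursion tree of the algorithm on components. When the algorithm processes a component $C$, it produces some born sets --- all of which pairwise touch in $G_b$ because the breaking is acceptable, so at most one born set of $C$ lies in any non-touching $\mathcal I$ --- together with sub-components $C_1,\dots,C_\ell$ satisfying $\alpha(C_j)<b$ for each $j$. The non-touching hypothesis together with acceptability forces a clean decomposition of $\mathcal I$: once a born set $R$ of $C$ lies in $\mathcal I$, every set of $\mathcal I$ born outside $C$ has only red edges to $C$ and hence touches nothing inside $C$ in $G_b$; and for a sub-component $C_j$ to which $R$ has no edge one gets $\alpha(R)+\alpha(C_j)\le\alpha(C)$, so $\alpha(C_j)\le\alpha(C)-b$, while for the remaining $C_j$ the sets of $\mathcal I$ inside them are restricted by the $\mathcal H_1$/$\mathcal H_2$ split (which is exactly why the $\sqrt2$ threshold is chosen as it is). One then bounds the $\mathcal I$-mass inside $C$ by the size (at most $2b-1$) of the at-most-one born set plus the inductive bounds over the relevant $C_j$, using Lemma~\ref{sizelemma} (i.e. $|T|\le 2\,\ext(T)-1$) to control individual set sizes, noting that the total extension of a fixed set telescopes down the tree with gains shrinking by roughly a constant factor --- the base $\tau$ of the logarithm, rather than $2$, being the price of the $\sqrt2$ slacks in the $\mathcal H_1$ threshold. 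The axioms P1--P11 of $f$ are precisely the arithmetic needed to close this recursion: P1--P2 for the base cases, P4 and P7 to combine the bounds over the $C_j$, P3 to pay for one extra born set as $\alpha$ contracts by the factor $\tau$, P8 for the $\mathcal H_1$/$\mathcal H_2$ dichotomy, P9--P11 for the breaking parameter $b=\lceil(\alpha(C)+1)/2\rceil$ and the small values, and P5--P6 to absorb the ceilings. Assembling the recursion at the root component yields the displayed inequality, and then Theorem~\ref{perfectweight} delivers a complete minor of $G$ of size $\lceil n/(2\alpha(G)-f(2\sqrt2\,\alpha(G)))\rceil$. The genuinely delicate part throughout is the bookkeeping in this decomposition step --- tracking exactly which older sets were extended into which sub-components, and by how much --- which is what makes the clean ``$\mathcal I$ splits along the recursion tree'' picture actually work.
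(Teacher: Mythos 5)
Your overall architecture is the paper's: check that every operation is acceptable, apply Theorem~\ref{acceptablegood} to get a chordal (hence perfect) $H$, weight each contracted vertex by $|T|$, and reduce everything to the inequality $\sum_{T\in\mathcal I}|T|\le 2\alpha(G)-f(2\sqrt2\,\alpha(G))$, proved by induction along the recursion tree of components. The acceptability half of your plan is essentially Lemma~\ref{bigaisacceptable} and is fine. But for the key inequality the proposal stops exactly where the proof begins: the missing idea is the precise inductive statement. The paper's Lemma~\ref{calcalpha} does not bound the ``$\mathcal I$-mass inside $C$'' directly; it bounds $f(\gamma\cdot\alpha(C-N(S)))$ by $|\mathcal I|+2\gap(C,\mathcal I,T)-1$, where $S=T^{C,1}$ is the pre-Step-$C$ portion of the unique set $T\in\mathcal I$ straddling $C$, the coefficient $\gamma\in\{1,\sqrt2,2\sqrt2\}$ degrades according as $S$ lies in $\mathcal H_1(C)$, in $\mathcal H_2(C)$, or does not exist, and $\gap(C,\mathcal I,T)$ is built from $\alpha(C-N(S))$ and charges $T$ only for the amount $\ext(C,T)$ of its extensions performed inside $C$. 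Your remark that ``the total extension of a fixed set telescopes down the tree'' names the difficulty rather than resolving it: to make it telescope one must strengthen the induction hypothesis in exactly this way, and only then can the case analysis (the paper's Cases 1--8, using Bounds 1--6 and properties P1--P11) be set up. In particular the early-termination branch after Substep~2 and the $|J|=1$ degeneracies are handled only because the hypothesis distinguishes $\mathcal H_1$ from $\mathcal H_2$; a hypothesis phrased purely in terms of $\alpha(C)$ and set sizes does not close.

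There is also a concrete false step in your decomposition. You assert that once a set $R$ born in Step~$C$ lies in $\mathcal I$, every set of $\mathcal I$ born outside $C$ has only red edges to $C$ and hence contributes nothing inside $C$. This is not so: a set $T$ born before Step~$C$ with $T\cap V(C)\neq\emptyset$ necessarily meets $C$ through blue edges, and it can coexist in $\mathcal I$ with a set $R$ born in Substep~4 of Step~$C$. Lemmas~\ref{noh1andstep4} and~\ref{noh2extandstep4} exclude this only when $T^{C,1}\in\mathcal H_1(C)$, or when $T^{C,1}\in\mathcal H_2(C)$ and was extended during Step~$C$; the remaining possibility ($T^{C,1}\in\mathcal H_2(C)$, not extended) is the paper's Case~7, which needs property P8 and the identity that $\ext(R)$ minus the extensions of $R$ inside the sub-components equals $b$ in order to absorb the extra $b$ contributed by $R$ (Bound~6). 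So the clean ``at most one of a born set or an outside set per component'' picture does not hold as stated, and repairing it is a genuine part of the argument, not bookkeeping.
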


To prove  Theorem~\ref{finishlog} we use Theorems
\ref{perfectweight} and \ref{acceptablegood}, so we need to prove
that the algorithm uses acceptable operations and give an upper
bound for the weight of an independent set.

\newcommand{\tnot}[1]{T^{C,#1}}
\newcommand{\tcnot}[2]{T^{#2,#1}}
\newcommand{\rnot}[1]{R^{C,#1}}
\newcommand{\snot}[1]{S^{C,#1}}
\newcommand{\qnot}[2]{Q^{#2,#1}_i}
\newcommand{\qfnot}[0]{Q^{C,5}_i}

\begin{biganotation}
    Let $\mathcal{F}$ be the partition after the algorithm terminates, and let $G_b$ be the spanning subgraph of blue edges after the algorithm terminates.
    Let $\mathcal{F}_C$ be the family before Step $C$ begins, and define
    $A_C = V(C) - \cup_{R \in \mathcal{F}_C} R$.  If $\alpha(C) > 1$, define
    $\mathcal{F}^i_C$ to be the family right before Substep $i$ of Step $C$, define $\mathcal{F}^5_C$ to be the family after all substeps of Step $C$
    are completed,
    and let $A^i_C = V(C) - \cup_{R \in \mathcal{F}^i_C } R$.
    For $T \in \mathcal{F}$ and $1 \leq i \leq 5$, define
    \begin{align*}
      \tnot{i} = \begin{cases}
                  S & S \in \mathcal{F}^i_C \text{ and } S \subseteq T \text{ if there exists such an S}, \\
                  \emptyset & \text{otherwise}.
                \end{cases}
    \end{align*}
    In other words, $\tnot{i}$ is the set in $\mathcal{F}^i_C$ that gets extended to $T$ during the rest of the algorithm.

    Define $\mathcal{H}_0(C)$, $\mathcal{H}_1(C)$, and $\mathcal{H}_2(C)$ to be the partition chosen in Substep 1 of Step $C$.
\end{biganotation}

\begin{logext} \label{logext}
    Let $\tnot{1} \in \mathcal{H}_1(C) \cup \mathcal{H}_2(C)$ such that $\tnot{1}$ was extended during Substep~2 or Substep~3 of Step~$C$.
    This extension satisfies all the conditions of an extension.
\end{logext}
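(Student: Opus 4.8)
The plan is to check, one at a time, the five requirements in the definition of ``extending $T$ into $X$ by $k$'', applied with $X = V(D)$ for the component $D$ of $G[A]$ that triggers the extension and $k = \alpha(D - N(T)) - b + 1$, where $T$ denotes the state of the set $\tnot{1}$ at the instant the extension is performed (so $\tnot{1} \subseteq T$, and since the only vertices ever added to it during Substeps~2 and~3 of Step~$C$ come from components of $G[A]$ with $A \subseteq V(C)$, we have $T \setminus \tnot{1} \subseteq V(C)$; also $T \cap V(C) = T \setminus \tnot{1}$ because $V(C) \subseteq U$ and hence $\tnot{1} \cap V(C) = \emptyset$ at the start of Step~$C$). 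Three of the requirements are immediate. The set $T$ belongs to the current family $\mathcal{F}$ because $\mathcal{H}_1(C) \cup \mathcal{H}_2(C) \subseteq \mathcal{F}$. We have $k \in \mathbb{Z}^+$ and $k \leq \alpha(V(D) - N(T))$ because the extension is only performed when $b \leq \alpha(D - N(T))$ and because $b \geq 1$. And $X = V(D) \subseteq A \subseteq U$: the set $A$ equals $V(C)$ at the start of Step~$C$, which lies in $U$ since $C$ is a component of $G$ minus all current family sets, and each update $A := A - T$ deletes precisely the vertices just absorbed into a family set, so $A$ never leaves $U$.

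The crux is the requirement that there be no red edges between $T$ and $X = V(D)$. The key structural fact is that red edges are always colored \emph{en masse}: both in Step~(a) of a breaking and in Substep~1 of a Step, an edge from a family set $S$ to a vertex $w$ is colored red only as part of reddening \emph{all} edges from $S$ to the component $D'$ of $G[U]$ (at that moment) that contains $w$, and this occurs only when $\alpha(D' - N(S)) = \alpha(D')$. Since $U$ only shrinks over the course of the algorithm, the component of $G[U]$ containing a fixed vertex also only shrinks; in particular, if at the start of Step~$C$ there is a red edge from $\tnot{1}$ to some vertex of $V(C)$, then at the time that edge was reddened all of $V(C)$ lay in the relevant component $D'$, so \emph{every} edge from $\tnot{1}$ to $V(C)$ is red, forcing $\tnot{1} \in \mathcal{H}_0(C)$ --- contrary to $\tnot{1} \in \mathcal{H}_1(C) \cup \mathcal{H}_2(C)$. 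Likewise, Substep~1 of Step~$C$ reddens an edge from $\tnot{1}$ to $V(C)$ only when $\alpha(C - N(\tnot{1})) = \alpha(C)$, which again reddens \emph{all} such edges and places $\tnot{1}$ in $\mathcal{H}_0(C)$. Hence no edge from $\tnot{1}$ to $V(C) \supseteq V(D)$ is red. Finally, the vertices of $T \setminus \tnot{1}$ lay in $U$ before Step~$C$, so edges between them and $V(D) \subseteq U$ were blue before Step~$C$; and no edges are reddened during Substeps~2 and~3 (reddening resumes only in Substep~4, after the extension). Therefore no edge between $T$ and $V(D)$ is red at the instant of the extension.

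It remains to check that $G_b[V(D) \cup T]$ is connected. The subgraph $G_b[T]$ is connected: this is an invariant, true when a set is born as a singleton and preserved by every extension by Lemma~\ref{extlemma}. The subgraph $G_b[V(D)]$ is connected and in fact equals $G[V(D)]$, since $D$ is a component of $G[A]$, $A \subseteq U$, and every red edge has an endpoint in a family set, so $G_b$ and $G$ agree on $U$. And there is a blue edge joining $T$ to $V(D)$: the extension is triggered by $\alpha(D - N(T)) < \alpha(D)$, which forces $N(T) \cap V(D) \neq \emptyset$, so there is a $G$-edge between $T$ and $V(D)$, and by the previous paragraph it is blue. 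Thus all five conditions of an extension hold. I expect the only real difficulty to be the \emph{en masse} reddening argument used to rule out red edges between $T$ and $V(D)$; once one observes that reddening always applies to an entire component of $G[U]$ and that components only shrink, everything else is bookkeeping with the invariants $G_b[T]$ connected and $G_b[U] = G[U]$.
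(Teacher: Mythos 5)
Your proof is correct and follows essentially the same route as the paper's: a direct verification of each condition in the definition of an extension, using that $D$ is a component of $G[A]$ with $A$ contained in the unassigned vertices, that $\alpha(D - N(T)) < \alpha(D)$ forces a (blue) edge from $T$ to $D$, and that $b \geq 1$ gives the bound on $k$. The only difference is that you justify the ``no red edges'' condition via the en-masse reddening argument, whereas the paper simply asserts it from $\tnot{1} \notin \mathcal{H}_0(C)$; your version is the more careful one.
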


\begin{proof}
    Let $A$ be the set of vertices not in a set of $\mathcal{F}$ right before the extension,
    $D$ the component of $G[A]$ which $\tnot{1}$ is extended into and $G_b'$ the spanning subgraph of
    blue edges at the time of the extension.
    Since $\tnot{1} \notin \mathcal{H}_0(C)$ there are no red edges between $\tnot{1}$ and $C$
    so no red edges between $\tnot{1}$ and $D$.
    Since each red edge at the time of extension has at least one endpoint in $\mathcal{F}_C$, we have $G[V(D)] = G_b'[V(D)]$.
    Since $D$ is a component of $G[A]$, we have that
    $G_b'[V(D)]$ is connected.  Since we are extending $\tnot{1}$ we must have $\alpha(D - N(\tnot{1})) < \alpha(D)$
    implying there exist edges between $\tnot{1}$ and
    $D$.  These edges must be blue so $G_b'[V(D) \cup \tnot{1}]$ is connected.  Finally, we extend by $\alpha(D - N(\tnot{1})) - b + 1$
    which is smaller than $\alpha(D - N(\tnot{1}))$ since $b \geq 1$.
\end{proof}

\begin{bigaisacceptable} \label{bigaisacceptable}
    $\mathcal{F}$ is formed by acceptable operations.
\end{bigaisacceptable}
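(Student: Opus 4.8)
The plan is to verify the two conditions in the definition of an acceptable operation for every operation the algorithm performs. All operations are either a breaking of some $C$ by $1$ (when $\alpha(C)=1$), a breaking of $A$ by $b$ in Substep~4, or an extension in Substep~2 or~3; extensions are always acceptable, and Lemma~\ref{logext} has already checked they satisfy the conditions of an extension, so the work is entirely about the breakings. For a breaking of $C$ by $1$ with $\alpha(C)=1$, every component $D$ of $G[C]$ is a clique, so $\alpha(D)=1<2=2k$ and $\alpha(D-N(T))\le 1 = k$ for every $T$, giving both conditions immediately. So the heart of the matter is the breaking of $A=A^4_C$ by $b=\lceil(\alpha(C)+1)/2\rceil$ in Substep~4.

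For that breaking I would fix a component $D$ of $G[A^4_C]$ and a set $T\in\mathcal{F}^4_C$ and argue by cases on which of $\mathcal{H}_0(C)$, $\mathcal{H}_1(C)$, $\mathcal{H}_2(C)$ contains $\tnot{1}$ (the ancestor of $T$ in $\mathcal{F}^1_C$), or whether $T$ was born during Step~$C$ itself, or in an earlier step. First, the second condition, $\alpha(D)<2b$: since $D\subseteq A^4_C\subseteq V(C)$ we have $\alpha(D)\le\alpha(C)\le 2\lceil(\alpha(C)+1)/2\rceil - 1 < 2b$, so this is automatic. For the first condition I must show that for each such pair $T,D$, either the edges between $T$ and $D$ are already red, or $\alpha(D-N(T))=\alpha(D)$, or $\alpha(D-N(T))<b$. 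If $T\in\mathcal{H}_0(C)$ the edges between $T$ and $C$ (hence $D$) were colored red in Substep~1. If $T$ comes from a set in $\mathcal{H}_1(C)$: either that set was extended in Substep~2, in which case the termination condition of Substep~2 forces $\alpha(D-N(T))\ge\alpha(D)$ or $\alpha(D-N(T))<b$ for the final $A$, or it was not extended, in which case we did not proceed past Substep~2 and Substep~4 is never reached for this $C$ — so the case is vacuous — unless no bad pair existed, in which case again $\alpha(D-N(T))\ge\alpha(D)$ or $<b$. If $T$ comes from $\mathcal{H}_2(C)$, the termination condition of Substep~3 gives the dichotomy $\alpha(D-N(T))\ge\alpha(D)$ or $\alpha(D-N(T))<b$ for the final $A=A^4_C$. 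If $T$ was born during Substep~4 of Step~$C$ (so born in the very breaking we are analyzing, from a component $D'\ne D$ of $G[A^4_C]$), then since $D$ and $D'$ are distinct components of $G[A^4_C]$ there are no edges between them, so $\alpha(D-N(T))=\alpha(D)$; the same reasoning handles $T$ born in an earlier Step $C'$ with $C\not\subseteq C'$. Finally if $T$ was born in an earlier Step $C'$ with $C\subseteq C'$, then $T$ — possibly after extensions — satisfies, by the analysis of that earlier step together with Lemma~\ref{extdecalpha} controlling how $\alpha(\cdot - N(T))$ dropped under those extensions, that $\alpha(C-N(T))\ge\alpha(C)$ or $\alpha(C-N(T))<b'$ for the breaking constant $b'$ of Step~$C'$; since $\alpha(C)\le 2b-1$ and $b'\le b$ when $\alpha(C')\ge\alpha(C)$, this again yields one of the three alternatives for $D\subseteq C$.

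I would organize this as an induction on the number of operations performed: assuming every operation so far was acceptable (so Theorem~\ref{acceptablegood} and the structural facts it rests on apply to the family built up to now), show the next one is acceptable. The key supporting observation, used repeatedly, is that each red edge has exactly one endpoint in a set of the current family, so $G_b[D]=G[D]$ for any component $D$ of the current $U$; this is what lets the termination conditions of Substeps~2 and~3, which are phrased in terms of $\alpha(D-N(T))$ in $G$, translate directly into the red/blue bookkeeping the definition of acceptable breaking demands.

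The main obstacle I anticipate is the bookkeeping around $\mathcal{H}_1(C)$ and the early termination of Step~$C$: one must be careful that when some $T\in\mathcal{H}_1(C)$ fails to be extendable and Substep~4 is skipped, no breaking is performed and so nothing needs to be checked — but one must also confirm that in the complementary case the extensions in Substep~2 genuinely drive every surviving pair $T,D$ into the required range, which is exactly the role of the Substep~2 loop's stopping rule. Keeping straight the distinction between the component $C$ of the original outer loop, the shrinking set $A$ inside a Substep, and the components $D$ of $G[A]$ is where the proof is most error-prone, but none of it requires a genuinely new idea beyond Lemmas~\ref{sizelemma} and~\ref{extdecalpha} and the definition of the substeps.
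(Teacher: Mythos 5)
Your overall strategy is the one the paper uses: reduce everything to the Substep-4 breaking, note that $\alpha(D)\le\alpha(C)<2b$ handles the second acceptability condition automatically, and then split on whether the ancestor $T^{C,1}$ lies in $\mathcal{H}_0(C)$, $\mathcal{H}_1(C)$, or $\mathcal{H}_2(C)$, using Lemma~\ref{extdecalpha} for the extended sets and the stopping rules of Substeps~2 and~3 (plus the early-termination check between them) for the unextended ones. That part of your argument matches the paper's proof. However, there are two concrete problems. First, you omit the red-coloring performed in Substep~1 from your inventory of operations. That coloring is not an extension and is not literally the Substep-4 breaking, yet it changes $G_b$, and the entire chordality machinery (Theorem~\ref{pseopreserved}, Lemma~\ref{accbreakpreservesclean}) only applies to families built from the two declared operations. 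The paper closes this by observing that Substep~1 can be viewed as an acceptable breaking of $V(C)$ by $\alpha(C)+1$: Step~(a) of that breaking colors exactly the intended edges red, and Step~(b) produces no new sets since no component has independence number $\alpha(C)+1$. Without some such device your $\mathcal{F}$ is not ``formed by acceptable operations'' in the defined sense, so this is a gap you must fill.

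Second, your final two cases (``$T$ born during Step~$C$ itself'' and ``$T$ born in an earlier Step~$C'$'') are not parallel to the $\mathcal{H}_i$ cases: every set present before the Substep-4 breaking belongs to $\mathcal{F}_C$ and hence has its $T^{C,1}$ in exactly one of $\mathcal{H}_0(C),\mathcal{H}_1(C),\mathcal{H}_2(C)$ (sets born inside the Substep-4 breaking are not subject to the acceptability precondition at all, since that is checked before Step~(a)). So those cases are redundant — and the one you argue in detail is wrong: with $C\subseteq C'$ you have $\alpha(C')\ge\alpha(C)$ and therefore $b'=\lceil(\alpha(C')+1)/2\rceil\ge b$, not $b'\le b$, so $\alpha(C-N(T))<b'$ gives no control of the form $\alpha(D-N(T))<b$. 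Since the sets in question are already disposed of by your $\mathcal{H}_1/\mathcal{H}_2$ analysis of Step~$C$ itself, the fix is simply to delete these extra cases rather than repair them; but as written the case structure both contains an invalid inequality and signals that the partition $\mathcal{H}_0(C)\cup\mathcal{H}_1(C)\cup\mathcal{H}_2(C)=\mathcal{F}_C$ has not been fully exploited.
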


\begin{proof}
    Consider Step~$C$.  If $\alpha(C) = 1$, we break $C$ by 1.  This breaking is acceptable because for each $T \in \mathcal{F}_C$ we either have
    $\alpha(C - N(T)) = 0 < 1$ or $\alpha(C - N(T)) = \alpha(C)$.
    So assume $\alpha(C) \geq 2$.

    The coloring in Substep 1 can be viewed as breaking $V(C)$ by $\alpha(C) + 1$.  For $T \in \mathcal{F}$,
    in Step~(a) of breaking $V(C)$ by $\alpha(C)+1$ we
    colors all edges red between $T$ and $D$, where $D$ is a component of $C$ (i.e. $D = C$) with
    $\alpha(D - N(T)) = \alpha(D)$.
    Since we are breaking by $\alpha(C) + 1$, Step~(b) of the breaking does not produce any new sets.
    This breaking is acceptable since for each $T \in \mathcal{F}$, $\alpha(C - N(T)) \leq \alpha(C) < \alpha(C)+1$.

    Extensions are always acceptable, so if Step~$C$ does not continue to Substep~3 after Substep~2 then
     Step~$C$ uses acceptable
    operations.  So assume that Step~$C$ continues to Substep~3 and then consider the breaking in Substep~4.
    Since $b = \left\lceil (\alpha(C) + 1)/2 \right\rceil$, for any component $D$ of $G[A^4_C]$ we have
     $\alpha(D) \leq \alpha(C) \leq 2b$.
    So consider some $\tnot{4} \in \mathcal{F}^4_C$ and some component $D$ of $G[A^4_C]$.
    If $\tnot{1} \in \mathcal{H}_0(C)$, then $\tnot{1} = \tnot{4}$ and
    $\alpha(D - N(\tnot{4})) \leq \alpha(C - N(\tnot{4})) < b$ or all edges between $\tnot{4}$ and $D$ are colored red.

    If $\tnot{1} \in \mathcal{H}_1(C)$, then we considered extending $\tnot{1}$ in Substep~2.
    If we extended $\tnot{1}$ then by Lemma~\ref{extdecalpha} we must
    have $\alpha(D - N(\tnot{4})) < b$.  So assume $\tnot{1} = \tnot{4}$ and that $b \leq \alpha(D - N(\tnot{1}))$,
    and let $D'$ be the component
    of $G[A^3_C]$ which contains $D$.  Then $b \leq \alpha(D' - N(\tnot{1}))$ and since we continued to Substep~3
     we must have
    $\alpha(D' - N(\tnot{1})) < \alpha(D')$.  In this case we should have continued Substep~2 with the pair $\tnot{1},D'$.
      Thus we must have
    $\alpha(D - N(\tnot{1})) < b$.

    If $\tnot{1} \in \mathcal{H}_2(C)$ then we considered extending $\tnot{1}$ in Substep~3.
    If we extended $\tnot{1}$ then by Lemma~\ref{extdecalpha} we must
    have $\alpha(D - N(\tnot{4})) < b$.  So assume $\tnot{1} = \tnot{4}$.
    If $b \leq \alpha(D - N(\tnot{4})) < \alpha(D)$, then we should have continued
    Substep~3 with the pair $\tnot{1},D$.
    Thus either $\alpha(D - N(\tnot{4})) < b$ or $\alpha(D - N(\tnot{4})) = \alpha(D)$,
     showing that the breaking in Substep~4 is acceptable.
\end{proof}

We now need to bound the maximum weight of an independent set.
Define the set of \textbf{independent subfamilies of $\mathcal{F}$} in $G_b$ by
\begin{align*}
    \text{IND}_{G_b}(\mathcal{F}) = \left\{ \mathcal{I} \subseteq \mathcal{F} : \mathcal{I} \text{ is a pairwise non-touching subfamily in } G_b \right\}\!.
\end{align*}
Independent subfamilies of $\mathcal{F}$ correspond to independent sets in $H$.

\medskip

Using the weight function which measures a set with its size, the total weight is $\left| V(G) \right|$.
Then the total weight of $\mathcal{I} \in \IND_{G_b}(\mathcal{F})$ is
$\sum_{T \in \mathcal{I}} \left| T \right|$.  Using Lemma~\ref{sizelemma},
 we know that the weight of $\mathcal{I}$ is at most
$2 \sum_{T \in \mathcal{I}} \ext(T) - \left| \mathcal{I}\right|$.
 We will give an upper bound of $2 \alpha(G) - f(2 \sqrt{2} \alpha(G))$ on
the weight.  To do this, we prove the following inequality
\begin{align*}
   f(2 \sqrt{2} \alpha(G)) \leq \left| \mathcal{I} \right| + 2 \alpha(G) - 2 \sum_{T \in \mathcal{I}} \ext(T).
\end{align*}
Note that when we analyzed the Section 2.2 algorithm, we showed that
either $\left| \mathcal{I} \right|$ is at least 2 or that $\alpha(G)
- \sum_{T \in \mathcal{I}} \ext(T)$ is at least 1. That is, we
showed that in order for the total amount of extensions of sets in
$\mathcal{I}$ to be $\alpha(G)$ we need more than one set.

\begin{figure}
  \centering
  \includegraphics[height=2in]{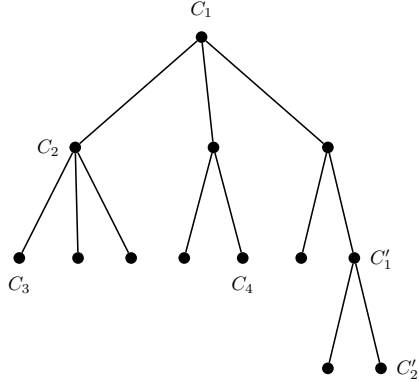}
  \caption{Containment tree of the steps run by the algorithm.}
  \label{fig:casestree}
\end{figure}

Consider Figure~\ref{fig:casestree}, where the vertices of the tree are the steps run by the algorithm.
Each step of the algorithm corresponds to a component, and the tree is the containment tree of these components.
Let $\mathcal{I} \in \IND_{G_b}(\mathcal{F})$, with
$T, S \in \mathcal{I}$.  Say that $T$ is born in the step labeled $C_1$ in the figure, and is extended during the steps labeled $C_2$, $C_3$, and
$C_4$. Assume that $S$ is born in the step labeled $C'_1$ and is extended in the step labeled $C'_2$.
We would like to prove by induction on the steps of the algorithm that
\begin{align*}
\left| \left\{ Q \in \mathcal{I} : Q \subseteq V(C) \right\} \right| + 2 \alpha(C) - 2 \sum \left\{\ext(Q) : Q \in \mathcal{I}, Q \subseteq V(C) \right\}
\end{align*}
is large.  We are unable to prove this directly because when the
induction reaches Step~$C_1$ we must include $\ext(T)$ into the sum for the first time because
Step~$C_1$ is the first step where $T$ is completely contained inside the component for the step.  Instead, we would like our inductive
bound for Step~$C_2$ to include the amount of extensions of $T$ carried out in steps $C_2$ and $C_3$ which is only part of $\ext(T)$, so that
when we reach Step~$C_1$ the inductive bounds for the smaller components contained inside $C_1$ already include most of the value $\ext(T)$.
So we define a notion of the gap between
$\alpha(C)$ and $\sum \left\{ \ext(Q) : Q \in \mathcal{I}, Q \subseteq V(C) \right\}$ which allows us to include only the amount extensions of $T$
into some subset of $V(C)$.  Note that since $\mathcal{F}$ is a partial simplicial elimination ordering, we can have at most one set $T$ which
has part of its extensions inside $C$ and part outside $C$.  Define for any $T \in \mathcal{I}$
\begin{align*}
  \ext(C, T) &= \text{ the total amount of extensions of } T \text{ into } X \text{ where } X \subseteq V(C), \\
  \gap(C, \mathcal{I}, T) &= \alpha(C - N(\tnot{1})) -
      \ext(C, T) - \sum \left\{ \ext(Q) : Q \in \mathcal{I}, Q \neq T \text{ and } Q \cap V(C) \neq \emptyset \right\}\!.
\end{align*}
Note that if $T \cap V(C) \neq \emptyset$ then for each $Q$ in the sum we must have $Q \subseteq V(C)$
because $\mathcal{F}$ is a partial simplicial elimination ordering.
In addition, define
\begin{align*}
   \ext(C, \emptyset) &= 0, \\
   \gap(C, \mathcal{I}, \emptyset) &= \alpha(C) - \sum_{\substack{ Q \in \mathcal{I} \\ Q \cap V(C) \neq \emptyset}} \ext(Q).
\end{align*}
In the next lemma, we show that $\left| \mathcal{I} \right| + 2\gap(C, \mathcal{I}, T)$ is large by induction on the number of steps carried out by the algorithm.
For comparison with the Section 2.2 algorithm, Claim~\ref{2am2lemma} proves $0 \leq \gap(G, \mathcal{I}, \emptyset)$.

\begin{calcalpha} \label{calcalpha}
    Consider any component $C$ chosen as a Step during the algorithm.
    Let $\mathcal{I}_0 \in \IND_{G_b}(\mathcal{F})$, and let $\mathcal{I} = \left\{ Q \in \mathcal{I}_0 : Q \cap V(C) \neq \emptyset \right\}$.
\begin{itemize}
    \item[(i)] If $\alpha(C) > 1$ and there exists $T \in \mathcal{I}$ with $\tnot{1} \neq \emptyset$ and $\tnot{1} \in \mathcal{H}_1(C)$ then
    \begin{align*}
        f(\alpha(C - N(\tnot{1}))) \leq \left| \mathcal{I} \right| + 2 \gap(C, \mathcal{I}, T) - 1.
    \end{align*}

    \item[(ii)] If $\alpha(C) > 1$ and there exists $T \in \mathcal{I}$ with $\tnot{1} \neq \emptyset$ and
    $\tnot{1} \in \mathcal{H}_2(C)$ then
    \begin{align*}
        f(\sqrt{2} \alpha(C - N(\tnot{1}))) \leq \left| \mathcal{I} \right| + 2 \gap(C, \mathcal{I}, T) - 1.
    \end{align*}

    \item[(iii)] Otherwise,
    \begin{align*}
        f(2 \sqrt{2} \alpha(C)) \leq \left| \mathcal{I} \right| + 2 \gap(C, \mathcal{I}, \emptyset).
    \end{align*}
\end{itemize}
\end{calcalpha}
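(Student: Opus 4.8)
The plan is to prove all three parts of Lemma~\ref{calcalpha} together by strong induction on the number of steps in the subtree of Figure~\ref{fig:casestree} rooted at $C$ (processing the containment tree from the leaves up), using Theorem~\ref{pseopreserved}, Lemma~\ref{bigaisacceptable}, Lemma~\ref{sizelemma}, Lemma~\ref{extdecalpha} and the arithmetic of $f$ recorded in \fzeros through \fsmall. The base case is a leaf $C$. If $\alpha(C)=1$ then $C$ is a clique in $G$, broken by $1$, and the only sets meeting $V(C)$ are the singletons $\{v\}$, $v\in V(C)$: no set straddles $\partial V(C)$, since $V(C)$ lies in the leftover before Step~$C$ and every later extension draws its new vertices from a leftover disjoint from $V(C)$. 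Two such singletons are adjacent in $G_b$ (an edge inside $C$ is recolored only when it has an endpoint in a previously existing set), so $|\mathcal I|\le1$; and each $\{v\}$ has all of its $G$-neighbours inside $V(C)\cup\bigcup\mathcal F_C$, hence is disjoint from every later leftover, hence lies in $\mathcal H_0$ of every subsequent step and is never extended, so $\ext(\{v\})=1$. Consequently $\gap(C,\mathcal I,\emptyset)$ equals $0$ if $\mathcal I=\{Q\}$ and $1$ if $\mathcal I=\emptyset$, so $|\mathcal I|+2\gap(C,\mathcal I,\emptyset)\ge1\ge f(4\sqrt2)\ge f(2\sqrt2\,\alpha(C))$ by \fbases and \fsumlesss, which is (iii); parts (i), (ii) are vacuous here.

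For the inductive step, assume $\alpha(C)\ge2$, put $b=\left\lceil(\alpha(C)+1)/2\right\rceil$, and let $D_1,\dots,D_p$ be the children of $C$ in the tree --- the components of $G[A^5_C]$ if Step~$C$ runs all four substeps, of $G[A^3_C]$ if it terminates after Substep~2 --- each with $\alpha(D_j)<\alpha(C)$, so the inductive hypothesis applies. Lemma~\ref{bigaisacceptable} and Lemma~\ref{extdecalpha} supply the facts needed about Step~$C$: each set born during Substep~4 is a singleton extended by $b-1$ at birth, and the extensions in Substeps 2--3 cut the relevant $\alpha(D-N(T))$ by at least the extension amount. By Theorem~\ref{pseopreserved} at most one set straddles $\partial V(C)$ and at most one straddles each $\partial V(D_j)$; moreover any straddler of some $\partial V(D_j)$ that is born at or below $C$ must in fact have been born in Step~$C$ itself, so it is a Substep-4 set. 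Thus every $Q\in\mathcal I$ is contained in a unique $V(D_j)$ (born there or below), or is a Substep-4 set of Step~$C$ (possibly extended into some $V(D_j)$ afterwards), or is the unique straddler $T$ of $\partial V(C)$ named in (i)/(ii), which is absent precisely in case~(iii). Applying the inductive hypothesis to each $D_j$ with $\mathcal I_j=\{Q\in\mathcal I_0:Q\cap V(D_j)\neq\emptyset\}$ returns part~(iii) for $D_j$ if no set of $\mathcal I$ straddles $\partial V(D_j)$, and part~(i) or~(ii) otherwise, according to whether the pre-$D_j$ version of that straddler lies in $\mathcal H_1(D_j)$ or $\mathcal H_2(D_j)$.

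Adding these $p$ inequalities together with the direct contribution of the Substep-4 sets in $\mathcal I$ (each worth $b$ in $\ext$) and, in cases (i)/(ii), the amount $\ext(C,T)$ of $T$ extended inside $V(C)$, reconstructs $|\mathcal I|+2\gap(C,\mathcal I,\cdot)$ up to $f$-valued slack; what remains is to fold the $f$-values of the children into the single $f$-value demanded for $C$, and this is exactly what the properties are tailored for. \fptwos, iterated via \fsumlesss, merges the subproblems coming from the $D_j$; \ftaubases carries out the halving $\alpha(C)\rightsquigarrow\alpha(C)-b=\left\lceil(\alpha(C)-1)/2\right\rceil$; \fpones, \ftaus and \ftauineqs convert between the three arguments $2\sqrt2\,\alpha(C)$, $\sqrt2\,\alpha(C-N(\tnot{1}))$ and $\alpha(C-N(\tnot{1}))$ that distinguish cases (iii), (ii), (i) --- the threshold $\sqrt2(b-1)$ separating $\mathcal H_1$ from $\mathcal H_2$ being precisely the dichotomy appearing in \ftauineqs; \fsmall handles the small-$b$ regime where the halving step is too weak to gain a $+1$; and \freplones, \freplrs absorb the ceilings. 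The case $\mathcal I=\emptyset$ is immediate ($\gap(C,\emptyset,\emptyset)=\alpha(C)$ and $f$ grows sublinearly, so \freplones suffices), and the early-termination branch after Substep~2 is analogous with $A^3_C$'s components as children and no Substep-4 set.

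The hard part is the accounting carried inside this recombination. One must check that every unit of $\ext$ of a set straddling a boundary is charged to exactly one of $C,D_1,\dots,D_p$; that the $\mathcal H_1$-versus-$\mathcal H_2$ classification of the straddler's pre-$D_j$ version matches the quantity $\alpha(D_j-N(\cdot))$ that the child's inequality inserts as the argument of $f$; and that the extensions actually performed in Substeps 2--3 --- quantified through Lemma~\ref{extdecalpha} --- reduce the relevant independence numbers enough for the arguments of $f$ on the two sides of each inequality to agree. Working through the sub-cases of which set, if any, straddles which boundary and which $\mathcal H_i$ it occupies is where essentially all the length of the proof goes.
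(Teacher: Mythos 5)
Your outline reproduces the paper's strategy faithfully: induction on the containment tree, with the children of $C$ being the components of $G[A^5_C]$, the observation that the only sets of $\mathcal{I}$ meeting $V(C)$ but not contained in a single child are the unique straddler $T$ of $\partial V(C)$ and the unique Substep-4 set $R$ of Step~$C$ in $\mathcal{I}$, followed by summing the children's inequalities and massaging with P1--P11. The base case and the bookkeeping identities (your ``every unit of $\ext$ is charged to exactly one of $C,D_1,\dots,D_p$,'' which is the paper's Bounds 5 and 6) are fine as far as they go.

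However, the proposal stops exactly where the proof begins. You write that folding the children's $f$-values into the parent's ``is exactly what the properties are tailored for'' and that working through the sub-cases ``is where essentially all the length of the proof goes'' --- but those sub-cases are the content of the lemma, and none of them is carried out. The lemma's three conclusions carry three different coefficients ($1$, $\sqrt2$, $2\sqrt2$) precisely because the recombination only closes case by case, and several cases need facts you have not established. Concretely: (a) when both $T\neq\emptyset$ and $R\neq\emptyset$ lie in $\mathcal{I}$, one must first prove that $\tnot{1}\in\mathcal{H}_2(C)$ and that $T$ receives \emph{no} extension during Step~$C$ (the paper's Lemmas~\ref{noh1andstep4} and~\ref{noh2extandstep4}, which you never invoke or reprove); only then does the telescoping give $\gap(C,\mathcal{I},T)-\sum_i\gap(D_i,\mathcal{I}_i,Q_i)=\alpha(C-N(\tnot{1}))-b-\lambda$, and closing that case still requires the dichotomy of P8 together with the lower bound $\alpha(C-N(\tnot{1}))\geq\sqrt2(b-1)$. (b) When the straddler $T$ was extended, $|J|=1$, and Substep~3 was skipped, one must show that the unique child's straddler lands in $\mathcal{H}_0(D_i)\cup\mathcal{H}_2(D_i)$ (so $\gamma_i\geq\sqrt2$), which uses the existence of a component $D_j$ witnessing the early termination with $b\leq\alpha(D_j)\leq\sqrt2(b-1)$. (c) When $T$ is unextended, in $\mathcal{H}_2(C)$, and $\lambda=\alpha(C-N(\tnot{1}))$, one must argue $|J|\geq2$ from $\lambda>\alpha(C)/2$ to upgrade the coefficient to $2\sqrt2$ via P4. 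Without verifying these (and the remaining) configurations, there is no way to confirm that the inequalities actually close with the stated constants, so the proposal is an accurate plan rather than a proof.
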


Using Lemma~\ref{calcalpha}, we can prove Theorem~\ref{finishlog}.

\begin{proof}[Proof of Theorem \ref{finishlog}]
    Let $H$ be the graph formed from $G_b$ by contracting each set of $\mathcal{F}$.  Define $g$ to be the weight function on $V(H)$ which assigns
    to each $v \in V(H)$ the size of the set of $\mathcal{F}$ which contracted to $v$.
    By Lemma~\ref{bigaisacceptable} and Theorem~\ref{acceptablegood} $H$ is a perfect graph, so by Theorem~\ref{perfectweight} we just need to show
    that $\alpha_g(H) \leq 2 \alpha(G) - f(2 \sqrt{2} \alpha(G))$.

    Let $C_1, \ldots, C_k$ be the components of $G$.  Let $I$ be any independent set in $H$, which corresponds to a subfamily
    $\mathcal{I} \in \IND_{G_b}(\mathcal{F})$.  Then define
    $\mathcal{I}_i = \left\{ T \in \mathcal{I} : T \cap V(C_i) \neq \emptyset \right\}$.  Since $C_1, \ldots, C_k$ are components of $G$ we
    have $\mathcal{I}_i = \left\{ T \in \mathcal{I} : T \subseteq V(C_i) \right\}$.

    For each $T \in \mathcal{I}_i$, we have $T \subseteq V(C_i)$ which implies $\tcnot{1}{C_i} = \emptyset$.  Thus we
    apply the bound in case \textit{(iii)} of Lemma~\ref{calcalpha} for each component $C_i$ to obtain
    \begin{align} \label{eqf1}
        \sum_{1 \leq i \leq k} f(2 \sqrt{2} \alpha(C_i)) \leq \sum_{1 \leq i \leq k} \left| \mathcal{I}_i \right| + 2 \sum_{1 \leq i \leq k} \gap(C_i, \mathcal{I}_i, \emptyset).
    \end{align}
    Expanding the definition of $\gap(C_i, \mathcal{I}_i, \emptyset)$ in \eqref{eqf1} gives,
    \begin{align} \label{eqf2}
       \sum_{1 \leq i \leq k} f(2 \sqrt{2} \alpha(C_i)) &\leq \left| \mathcal{I} \right| + 2 \sum_{1 \leq i \leq k} \alpha(C_i) -
          2 \sum_{1 \leq i \leq k} \sum_{T \in \mathcal{I}_i} \ext(T).
    \end{align}
    Since $\sum_i \alpha(C_i) = \alpha(G)$ and each $T \in \mathcal{I}$ appears in exactly one $\mathcal{I}_i$, \eqref{eqf2} simplifies to
    \begin{align} \label{eqf3}
       \sum_{1 \leq i \leq k} f(2 \sqrt{2} \alpha(C_i)) &\leq \left| \mathcal{I} \right| + 2 \alpha(G) - 2 \sum_{T \in \mathcal{I}} \ext(T).
    \end{align}
    Using Lemma~\ref{sizelemma} to bound $\left| T \right|$ and rearranging \eqref{eqf3}, we have
    \begin{align*}
       g(I) = \sum_{T \in \mathcal{I}} \left| T \right| &\leq 2 \sum_{T \in \mathcal{I}} \ext(T) - \left| \mathcal{I} \right|
       \leq 2 \alpha(G) - \sum_{1 \leq i \leq k} f(2 \sqrt{2} \alpha(C_i)).
    \end{align*}
    By property \fsumless,
    \begin{align*}
       g(I) &\leq 2 \alpha(G) - f(2 \sqrt{2} \sum_{1 \leq i \leq k} \alpha(C_i))
       = 2 \alpha(G) - f(2 \sqrt{2} \alpha(G)).
    \end{align*}
    This says every independent set in $H$ has weight at most $2\alpha(G) - f(2\sqrt{2} \alpha(G))$, we have
    $\alpha_g(H) \leq 2\alpha(G) - f(2\sqrt{2} \alpha(G))$.
\end{proof}

\noindent Before proving Lemma~\ref{calcalpha}, we need some lemmas:

\begin{noh1andstep4} \label{noh1andstep4}
    Let $\tnot{1} \in \mathcal{H}_1(C)$ and let $R$ be a set born in Substep 4 of Step~$C$.  Then $\tnot{4}$ touches $R$ with blue edges.
\end{noh1andstep4}

\begin{proof}
    Since $\tnot{1} \in \mathcal{H}_1(C)$, all edges between $\tnot{1}$ and $C$ are colored blue at the start of Step~$C$.
    We produced an $R$ in Substep~4 so $\alpha(C) \geq 2$, so we consider extending $\tnot{1}$ in Substep~2.
    Since we continued to Substep 3 after Substep 2, we have for each component $D$ of $G[A^3_C]$, $\alpha(D - N(\tnot{3})) < b$.
    Now consider the component $D'$ of $G[A^4_C]$ which contains $R$.  Then there exists a component $D$ of $G[A^3_C]$ which contains $D'$ so
    $\alpha(D' - N(\tnot{4})) \leq \alpha(D - N(\tnot{4})) < b$ so no edges incident to $\tnot{4}$ are colored red during
    Substep~4.  Also, since $b \leq \alpha(R)$ by Lemma \ref{sizelemma} we must have an edge of $G$ between $\tnot{4}$ and $R$.
    This edge is colored blue at the end of the algorithm because no edges incident to $\tnot{4}$ are colored red during Substep~4
    and all future edge colorings only color edges between an existing set in $\mathcal{F}$
    and a vertex of $A$.
\end{proof}

\begin{noh2extandstep4} \label{noh2extandstep4}
    Let $\tnot{1} \in \mathcal{H}_2(C)$, and assume that $\tnot{1}$ was extended in Substep 3 of Step~$C$.
     Let $R$ be a set born during Substep 4
    of Step~$C$.  Then $\tnot{4}$ touches $R$ with blue edges.
\end{noh2extandstep4}

\begin{proof}
    Since $\tnot{1} \in \mathcal{H}_2(C)$ and we extended $\tnot{1}$,
    let $A$ be the subset of vertices of $C$ not yet in a set at the time we extend $\tnot{1}$,
    and let $D$ be the component of $G[A]$ where $b \leq \alpha(D - N(\tnot{1})) < \alpha(D)$.
    By Lemma~\ref{extdecalpha}, $\alpha(D - \tnot{4} - N(\tnot{4})) < b$.
    Also, consider any other component $D'$ of $G[A]$ besides $D$.  Since $\alpha(C)/2 < b < \alpha(D)$, we must have $\alpha(D') < b$ since
    $\alpha(D) + \alpha(D') \leq \alpha(C)$.  Thus for all components $D'$ of $G[A - \tnot{4}]$ we have $\alpha(D' - N(\tnot{4})) < b$.
    Now $R$ is connected so $R$ must be contained inside some component $D''$ of $G[A^4_C]$ which is
    contained inside some component $D'$ of $G[A - \tnot{4}]$.  If $\tnot{4}$ does not touch $R$ in $G$, then
    $\alpha(D - N(\tnot{4})) \geq \alpha(R) \geq b$ which
    gives a contradiction.  If $\tnot{4}$ touches $R$ with red edges, then we must have colored the edges between
    $\tnot{4}$ and $D''$ red during Substep 4 so
    $\alpha(D'' - N(\tnot{4})) = \alpha(D'') \geq \alpha(R) \geq b$, again giving a contradiction.
\end{proof}

\begin{h1h2lessac} \label{h1h2lessac}
    Let $\tnot{1} \in \mathcal{H}_1(C) \cup \mathcal{H}_2(C)$.  Then $\alpha(C - N(\tnot{1})) < \alpha(C)$.
\end{h1h2lessac}

\begin{proof}
    Assume $\tnot{1} \in \mathcal{H}_1(C) \cup \mathcal{H}_2(C)$.
    If $\alpha(C - N(\tnot{1})) = \alpha(C)$, then we color all edges between $T$ and $C$ red in Substep~1 of Step~$C$.  Since all edges
    are red, $\tnot{1} \in \mathcal{H}_0(C)$ which contradicts $\tnot{1} \in \mathcal{H}_1(C) \cup \mathcal{H}_2(C)$.
\end{proof}

\textit{Proof of Lemma~\ref{calcalpha}.}  The proof works by induction on $\left| V(C) \right|$,
where Step~$C$ is a step carried
out by the algorithm.  Fix an $\mathcal{I}_0 \in \IND_{G_b}(\mathcal{F})$ and a component
$C$ chosen by the algorithm and consider Step $C$.  For the rest of this section, let
$D_1, \ldots, D_k$ be the components of $G[A^5_C]$.  We can then apply induction into
each of the components $D_i$ because at some future time in the algorithm $D_i$ will be
selected as a Step.
Let $\mathcal{I} = \left\{ T \in \mathcal{I}_0 : T \cap V(C) \neq \emptyset \right\}$ and
$\mathcal{I}_i = \left\{ T \in \mathcal{I} : T \cap V(D_i) \neq \emptyset \right\}$.
We can apply induction into $D_i$ with the independent subfamily $\mathcal{I}_i$.

If $\alpha(C) = 1$, we need to show  $f(2 \sqrt{2}) \leq \left|
\mathcal{I} \right| + 2 \gap(C, \mathcal{I}, \emptyset)$.  If
$\gap(C, \mathcal{I}, T) = 0$ then $\left| \mathcal{I} \right| = 1$.
As $f$ is non-decreasing, property \fbases shows $f(2 \sqrt{2}) \leq
1$.

Now assume $\alpha(C) > 1$ and consider the possibilities for $T \in \mathcal{I}$ with $\tnot{1} \neq
\emptyset$.  We cannot have two sets $T, R \in \mathcal{I}$ with $\tnot{1}
\neq \emptyset$ and $\rnot{1} \neq \emptyset$, because this would contradict
that $\mathcal{F}$ forms a partial simplicial elimination ordering since $T$
and $R$ touch $C$ with blue edges ($G_b[T]$ is connected and $T \cap V(C) \neq
\emptyset$) but all edges between $T$ and $R$ are red.  Also, we cannot have
two sets $T, R \in \mathcal{I}$ which were born in Substep 4 of Step $C$
because the sets born in Substep 4 are pairwise touching using blue edges.
Thus define $T$ to be the set in $\mathcal{I}$ with $\tnot{1} \neq \emptyset$
if it exists and otherwise define $T = \emptyset$, and define $R$ to be the set
in $\mathcal{I}$ which was born in Substep 4 of Step $C$, otherwise $R =
\emptyset$.  Note that $T \neq \emptyset$ implies that $T \in \mathcal{I}$ so that
$T \cap V(C) \neq \emptyset$ which implies there are blue edges between $T$ and $C$ which implies
$\tnot{1} \notin \mathcal{H}_0(C)$.  Thus if $T \neq \emptyset$ we need to prove the inequality
in either case \textit{(i)} or \textit{(ii)} of Lemma~\ref{calcalpha}.  If $T = \emptyset$ we need
to prove the inequality in case \textit{(iii)} of Lemma~\ref{calcalpha}.

For each $D_i$, at most one of $T$ or $R$ can touch $D_i$ using blue edges.
(If both touch $D_i$ using blue edges, then we contradict the partial simplicial elimination ordering.)
Define $Q_i$ to be $T$ or $R$ depending on which is contained in $\mathcal{I}_i$, and define $Q_i = \emptyset$ if neither is in $\mathcal{I}_i$.
Define
\begin{align*}
   \gamma_i &= \begin{cases}
      1 & \qnot{1}{D_i} \in \mathcal{H}_1(D_i), \\
      \sqrt{2} & \qnot{1}{D_i} \in \mathcal{H}_2(D_i), \\
      2 \sqrt{2} & Q_i = \emptyset.
   \end{cases}
\end{align*}

\begin{logindb} \label{logindb}
    \begin{align*}
        \sum_{1 \leq i \leq k} f(\gamma_i \alpha(D_i - N(\qfnot))) - \sum_{1 \leq i \leq k} 2\gap(D_i, \mathcal{I}_i, Q_i) \leq
           \left| \mathcal{I} - \left\{ T, R \right\} \right|.
    \end{align*}
\end{logindb}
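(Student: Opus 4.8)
The plan is to deduce Claim~\ref{logindb} by applying the inductive hypothesis, Lemma~\ref{calcalpha}, to each component $D_i$ of $G[A^5_C]$ together with the independent subfamily $\mathcal{I}_i$, and then adding the $k$ resulting inequalities; each $D_i$ is chosen as a later Step with $\left| V(D_i) \right| < \left| V(C) \right|$, so the inductive hypothesis applies. I will use throughout the structural fact that $D_i$ remains a component of the leftover graph $G[V(G) - \cup_{Q \in \mathcal{F}} Q]$ until Step~$D_i$ is run, so between the end of Step~$C$ and the start of Step~$D_i$ no extension adds a vertex of $V(D_i)$ to any set and no set born in that interval meets $V(D_i)$. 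In particular, if $S \in \mathcal{I}_i$ and $S \notin \{T, R\}$, then $S$ is absent from $\mathcal{F}^1_C$ (since $S \neq T$) and is not born in Substep~4 of Step~$C$ (since $S \neq R$), so $S$ is born during or after Step~$D_i$; hence no set of $\mathcal{F}^1_{D_i}$ is contained in $S$.

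First take $i$ with $Q_i = \emptyset$, so $T, R \notin \mathcal{I}_i$ and $\gamma_i = 2\sqrt{2}$. By the last remark, no set of $\mathcal{F}^1_{D_i}$ is contained in any member of $\mathcal{I}_i$, so case~\textit{(iii)} of Lemma~\ref{calcalpha} is the applicable one for $D_i$, giving $f(2\sqrt{2}\alpha(D_i)) \leq \left| \mathcal{I}_i \right| + 2\gap(D_i, \mathcal{I}_i, \emptyset)$. Since here $\qfnot = \emptyset$ (so $\gamma_i\alpha(D_i - N(\qfnot)) = 2\sqrt{2}\alpha(D_i)$), while $\gap(D_i, \mathcal{I}_i, Q_i) = \gap(D_i, \mathcal{I}_i, \emptyset)$ and $\left| \mathcal{I}_i \right| = \left| \mathcal{I}_i - \{T, R\} \right|$, this is exactly $f(\gamma_i\alpha(D_i - N(\qfnot))) - 2\gap(D_i, \mathcal{I}_i, Q_i) \leq \left| \mathcal{I}_i - \{T, R\} \right|$.

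Now take $i$ with $Q_i \neq \emptyset$, so $Q_i \in \{T, R\} \cap \mathcal{I}_i$ and no other element of $\mathcal{I}_i$ lies in $\{T, R\}$. The part $\qfnot$ of $Q_i$ present after Step~$C$ is disjoint from $V(D_i) \subseteq A^5_C$, yet $Q_i$ meets $V(D_i)$; hence $Q_i$ must first acquire a vertex of $V(D_i)$ through an extension carried out during Step~$D_i$ or a later Step whose component lies inside $V(D_i)$. Tracing that extension back — using that every blue edge from $Q_i$ to $V(D_i)$ emanates from a vertex of $Q_i$ already present at the start of Step~$D_i$, and that any such edge that becomes red in Substep~1 of Step~$D_i$ stays red — forces $\qnot{1}{D_i} \notin \mathcal{H}_0(D_i)$; therefore $\alpha(D_i) \geq 2$ and $\qnot{1}{D_i} \in \mathcal{H}_1(D_i) \cup \mathcal{H}_2(D_i)$, so $\gamma_i$ is well defined and records this class, and case~\textit{(i)} or~\textit{(ii)} of Lemma~\ref{calcalpha} gives $f(\gamma_i\alpha(D_i - N(\qnot{1}{D_i}))) \leq \left| \mathcal{I}_i \right| + 2\gap(D_i, \mathcal{I}_i, Q_i) - 1$. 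The vertices of $\qnot{1}{D_i}$ not already in $\qfnot$ were added between Step~$C$ and Step~$D_i$, so they lie in components of the leftover graph other than $D_i$ and have no edge of $G$ to $V(D_i)$; thus $N(\qnot{1}{D_i}) \cap V(D_i) = N(\qfnot) \cap V(D_i)$ and $\alpha(D_i - N(\qnot{1}{D_i})) = \alpha(D_i - N(\qfnot))$. Combining with $\left| \mathcal{I}_i \right| - 1 = \left| \mathcal{I}_i - \{T, R\} \right|$ again yields $f(\gamma_i\alpha(D_i - N(\qfnot))) - 2\gap(D_i, \mathcal{I}_i, Q_i) \leq \left| \mathcal{I}_i - \{T, R\} \right|$.

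Adding the $k$ inequalities gives precisely Claim~\ref{logindb}, once one checks $\sum_i \left| \mathcal{I}_i - \{T, R\} \right| = \left| \mathcal{I} - \{T, R\} \right|$: each $S \in \mathcal{I}$ other than $T$ and $R$ is born inside a single $D_l$, hence lies in $\mathcal{I}_l$ and in no other $\mathcal{I}_{l'}$, while $T$ and $R$ are excluded from every term. I expect the main obstacle to be exactly the bookkeeping in the $Q_i \neq \emptyset$ case: establishing that $\qnot{1}{D_i}$ genuinely belongs to $\mathcal{H}_1(D_i) \cup \mathcal{H}_2(D_i)$ — so that $\gamma_i$ is even defined — and that the neighbourhoods of $\qnot{1}{D_i}$ and $\qfnot$ agree on $V(D_i)$. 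Both facts rely on tracking, via the partial simplicial elimination ordering, when vertices enter $Q_i$ and when edges incident to $Q_i$ are recoloured, relative to the moment $D_i$ is processed; granting them, only the elementary arithmetic above is needed.
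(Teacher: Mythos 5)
Your proposal is correct and follows essentially the same route as the paper: apply the inductive hypothesis (Lemma~\ref{calcalpha}) to each component $D_i$ with the subfamily $\mathcal{I}_i$, using case \textit{(iii)} when $Q_i = \emptyset$ and case \textit{(i)} or \textit{(ii)} otherwise, replace $\alpha(D_i - N(\qnot{1}{D_i}))$ by $\alpha(D_i - N(\qfnot))$ via the neighbourhood agreement on $V(D_i)$, and sum using $\sum_i \left| \mathcal{I}_i - \left\{T,R\right\} \right| = \left| \mathcal{I} - \left\{T,R\right\} \right|$. You spell out more carefully than the paper why $\qnot{1}{D_i} \in \mathcal{H}_1(D_i) \cup \mathcal{H}_2(D_i)$ when $Q_i \neq \emptyset$, but this is elaboration of the same argument, not a different one.
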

We actually use Claim~\ref{logindb} in the following form:
\begin{align}\label{logindbreal}
       \sum_i f(\gamma_i \alpha(C - N(\qfnot))) + 2\gap(C, \mathcal{I},
        \emptyset) - 2 \sum_i \gap(D_i, \mathcal{I}_i, \emptyset)
          \leq \left| \mathcal{I} - \left\{ T,R \right\} \right| + 2 \gap(C, \mathcal{I}, \emptyset).
\end{align}

\begin{proof}
    Assume we have indexed the components so that for $1 \leq i \leq h_1$, $Q_i = T$ and for
    $h_1 < i \leq h_2$, $Q_i = R$ and for $h_2 < i \leq k$, $Q_i = \emptyset$.
    We consider Step $D_i$.  Then $\qnot{1}{D_i}$ is the set of $\mathcal{I}_i$ which will touch $D_i$ in blue and be considered in the statement
    of Lemma \ref{calcalpha}, and $\gamma_i$ is the coefficient inside the function $f$.  Thus for $1 \leq i \leq h_2$, we obtain
    \begin{align*}
       f(\gamma_i \alpha(D_i - N(\qnot{1}{D_i}))) \leq \left| \mathcal{I}_i \right| - 1 + 2 \gap(D_i, \mathcal{I}_i, Q_i).
    \end{align*}
    Note that $D_i \cap N(\qnot{1}{D_i}) = D_i \cap N(\qfnot)$ so we have
    \begin{align*}
       f(\gamma_i \alpha(D_i - N(\qfnot))) \leq \left| \mathcal{I}_i \right| - 1 + 2 \gap(D_i, \mathcal{I}_i, Q_i).
    \end{align*}
    Here, we can think of $\left| \mathcal{I}_i \right| - 1$ as counting the number of sets in $\mathcal{I}_i$ besides $Q_i$.
    For $h_2 < i \leq k$ we obtain
    \begin{align*}
       f(\gamma_i \alpha(D_i)) \leq \left| \mathcal{I}_i \right| + 2 \gap(D_i, \mathcal{I}_i, \emptyset).
    \end{align*}
    Again $\left| \mathcal{I}_i \right|$ is counting the sets of $\mathcal{I}_i$ besides $Q_i$.  Thus
    \begin{align*}
       \sum_{i = 1}^{h_2} (\left| \mathcal{I}_i \right| - 1) + \sum_{i = h_2+1}^k \left| \mathcal{I}_i \right| =
          \left| \mathcal{I} - \left\{ T, R \right\} \right|.
    \end{align*}
    Thus summing the inductive bounds over all $i$ we obtain (for $h_2 < i \leq k$, $Q_i = \emptyset$ so that $\alpha(D_i - N(\qfnot)) = \alpha(D_i)$)
    \begin{align*}
       \sum_{1 \leq i \leq k} f(\gamma_i \alpha(D_i - N(\qfnot)))
         &\leq \left| \mathcal{I} - \left\{ T, R \right\} \right| + \sum_{1 \leq i \leq k} 2\gap(D_i, \mathcal{I}_i, Q_i).
    \end{align*}
\end{proof}

We finish the proof of Lemma~\ref{calcalpha} by showing that the inequality in Claim~\ref{logindb} simplifies
in all cases to the inequalities in Lemma~\ref{calcalpha}.  For the simplification, we add $\gap(C, \mathcal{I}, T)$ to both sides of
Claim~\ref{logindb} and then use lower bounds on $\gap(C, \mathcal{I}, T) - \sum_i \gap(D_i, \mathcal{I}_i, Q_i)$
and lower bounds on $\sum_i f(\gamma_i \alpha(D_i - N(\qfnot)))$.
Define
\begin{align*}
   \theta &= \ext(C, T) - \sum_{1 \leq i \leq k} \ext(D_i, T) = \text{ the amount of extensions of $T$ during Step } C, \\
   \lambda &= \sum_{1 \leq i \leq k} \alpha(D_i - N(\qfnot)), \\
   J &= \left\{ i : \alpha(D_i - N(\qfnot)) > 0 \right\}\!.
\end{align*}

We claim the following bounds.
\medskip

\noindent \textbf{Bound 1}: If $T = R = \emptyset$, then $f(2 \sqrt{2} \lambda) \leq \sum_{i \in J} f(\gamma_i \alpha(D_i - N(\qfnot)))$,
\medskip

\noindent \textbf{Bound 2}: If $\left| J \right| \geq 2$, then $f(2\sqrt{2} \lambda) \leq \sum_{i \in J} f(\gamma_i \alpha(D_i - N(\qfnot)))$,
\medskip

\noindent \textbf{Bound 3:} If $J = \left\{ i \right\}$, then $f(\gamma_i \lambda) = \sum_{i \in J} f(\gamma_i \alpha(D_i - N(\qfnot)))$,
\medskip

\noindent \textbf{Bound 4:} $f(\lambda) \leq \sum_i f(\gamma_i \alpha(D_i - N(\qfnot)))$,
\medskip

\noindent \textbf{Bound 5}: If $R = \emptyset$, then
$\gap(C, \mathcal{I}, T) - \sum_i \gap(D_i, \mathcal{I}_i, Q_i) =
\alpha(C - N(\tnot{1})) - \theta - \lambda$,
\medskip

\noindent \textbf{Bound 6}: If $R \neq \emptyset$, then
$\gap(C, \mathcal{I}, T) - \sum_i \gap(D_i, \mathcal{I}_i, Q_i) =
\alpha(C - N(\tnot{1})) - b - \lambda$.
\medskip

We now justify these bounds.
For Bound~1, $T = R = \emptyset$ implies that $Q_i = \emptyset$ and $\gamma_i = 2 \sqrt{2}$ for all $i$
so the inequality follows by property \fsumless.
Bound~2 follows from property \fptwos since $\left| J \right| \geq 2$.  Bound~3 is an equality by definition of $\lambda$.
For Bound~4, property \fsumlesss and $\gamma_i \geq 1$ imply
\begin{align*}
    f(\lambda) \leq \sum_{1 \leq i \leq k} f(\alpha(D_i - N(\qfnot))) \leq \sum_{1 \leq i \leq k} f(\gamma_i \alpha(D_i - N(\qfnot))).
\end{align*}
Now we justify Bound~5; assume $R = \emptyset$.  First, using the definition of $Q_i$ we have
\begin{align} \label{eb5_1}
   \sum_{1 \leq i \leq k} \ext(D_i, Q_i) &= \sum_{1 \leq i \leq h_1} \ext(D_i, T) + \sum_{h_1 < i \leq k} \ext(D_i, \emptyset) =
   \sum_{1 \leq i \leq h_1} \ext(D_i, T).
\end{align}
For $h_1 < i \leq k$, we have $Q_i = \emptyset$ implying that $T \cap V(D_i) = \emptyset$ which implies $\ext(D_i, T) = 0$.  Thus the
equality in \eqref{eb5_1} expands to
\begin{align} \label{eb5_2}
   \sum_{1 \leq i \leq k} \ext(D_i, Q_i) = \sum_{1 \leq i \leq k} \ext(D_i, T).
\end{align}
Then expanding the definition of $\gap$,
\begin{align}
   \sum_{1 \leq i \leq k} \gap(D_i, \mathcal{I}_i, Q_i) &= \sum_{1 \leq i \leq k} \alpha(D_i - N(\qfnot)) -
   \sum_{1 \leq i \leq k} \ext(D_i, Q_i) - \sum_{1 \leq i \leq k} \sum_{\substack{ W \in \mathcal{I}_i \\ W \neq Q_i}} \ext(W) \notag \\
    &= \lambda - \sum_{1 \leq i \leq k} \ext(D_i, T) - \sum_{W \in \mathcal{I}, W \neq T} \ext(W). \label{eb5_3}
\end{align}
Then expanding the definition of $\gap(C, \mathcal{I}, T)$ and combining with the equality in \eqref{eb5_3} gives
\begin{align*}
    \gap(C, \mathcal{I}, T) - \sum_{1 \leq i \leq k} \gap(D_i, \mathcal{I}_i, Q_i) &=
      \alpha(C - N(\tnot{1})) - \ext(T, C) - \sum_{\substack{W \in \mathcal{I} \\ W \neq T}} \ext(W) \\ &- \sum_{1 \leq i \leq k} \gap(D_i, \mathcal{I}_i, Q_i) \\
      &= \alpha(C - N(\tnot{1})) - \ext(T, C) - \lambda + \sum_{1 \leq i \leq k} \ext(D_i, T) \\
     &= \alpha(C - N(\tnot{1})) - \lambda - \theta.
\end{align*}
This completes the proof of Bound 5.

Finally, consider Bound 6 and assume $R \neq \emptyset$.  Using the
definition of $Q_i$ we have
\begin{align} \label{eb6_1}
   \sum_{1 \leq i \leq k} \ext(D_i, Q_i) = \sum_{1 \leq i \leq h_1} \ext(D_i, T) + \sum_{h_1 < i \leq h_2} \ext(D_i, R).
\end{align}
By Lemma~\ref{noh1andstep4} and Lemma~\ref{noh2extandstep4} we did not extend $T$ during Step~$C$ so that
\begin{align} \label{eb6_2}
   \sum_{1 \leq i \leq h_1} \ext(D_i, T) = \ext(T, C).
\end{align}
Also, if we have an index $i$ with $Q_i \neq R$, this implies $R \cap V(D_i) = \emptyset$ so $\ext(D_i, R)=0$.  Combining \eqref{eb6_1} and
\eqref{eb6_2} gives
\begin{align*}
   \sum_{1 \leq i \leq k} \ext(D_i, Q_i) = \ext(C, T) + \sum_{1 \leq i \leq k} \ext(D_i, R).
\end{align*}
Then expanding the definition of $\gap$, we have
\begin{align} \label{eb6_3}
   \sum_{1 \leq i \leq k} \gap(D_i, \mathcal{I}_i, Q_i) &= \sum_{1 \leq i \leq k} \alpha(D_i - N(\qfnot)) -
     \sum_{1 \leq i \leq k} \ext(D_i, Q_i) - \sum_{1 \leq i \leq k} \sum_{\substack{W \in \mathcal{I}_i \\ W \neq Q_i}} \ext(W) \notag \\
    &= \lambda - \ext(C, T) - \sum_{1 \leq i \leq k} \ext(D_i, R) - \sum_{W \in \mathcal{I}, W \neq T, W \neq R} \ext(W) \notag \\
    &= \lambda - \ext(C, T) + \ext(R) - \sum_{1 \leq i \leq k} \ext(D_i, R) - \sum_{W \in \mathcal{I}, W \neq T} \ext(W) \notag \\
    &= \lambda - \ext(C, T) + b - \sum_{W \in \mathcal{I}, W \neq T} \ext(W).
\end{align}
The last inequality holds because $\ext(R) - \sum_i \ext(D_i, R)$ is
one plus the number of extensions of $R$ during Substep~4 of
Step~$C$ which is $b$. Then expanding the definition of $\gap(C,
\mathcal{I}, T)$ and combining with the equality in \eqref{eb6_3}
gives
\begin{align*}
   \gap(C, \mathcal{I}, T) - \sum_{1 \leq i \leq k} \gap(D_i, \mathcal{I}_i, Q_i) &=
     \alpha(C - N(\tnot{1})) - \ext(C, T) - \sum_{\substack{W \in \mathcal{I} \\ W \neq T}} \ext(W) \\ &- \sum_{1 \leq i \leq k} \gap(D_i, \mathcal{I}_i, Q_i) \\
   &= \alpha(C - N(\tnot{1})) - \lambda - b.
\end{align*}
This finishes the proof of all the bounds.

We now just need to show that in all the different cases, the inequality in Claim~\ref{logindb} simplifies to the inequalities
in Lemma \ref{calcalpha}.

\newtheorem{bigacasectr}{Theorem}
\newtheorem{bigacase1}[bigacasectr]{Case}
\newtheorem{bigacase2}[bigacasectr]{Case}
\newtheorem{bigacase3}[bigacasectr]{Case}
\newtheorem{bigacase4}[bigacasectr]{Case}
\newtheorem{bigacase5}[bigacasectr]{Case}
\newtheorem{bigacase6}[bigacasectr]{Case}
\newtheorem{bigacase7}[bigacasectr]{Case}
\newtheorem{bigacase8}[bigacasectr]{Case}

\begin{bigacase1}
    $R = T = \emptyset$.
\end{bigacase1}
     We apply Bounds 1 and 5 to simplify \eqref{logindbreal}
    \begin{align} \label{ec1_1}
       f(2 \sqrt{2} \lambda) + 2 \alpha(C) - 2 \lambda \leq \left| \mathcal{I} \right| + 2 \gap(C, \mathcal{I}, \emptyset).
    \end{align}
    Since $\lambda \leq \alpha(C)$ we use property \freplrs with
    $r = 2 \sqrt{2}$ to obtain
    \begin{align} \label{ec1_2}
       f(2 \sqrt{2} \alpha(C)) \leq f(2\sqrt{2} \alpha(C)) + \alpha(C) - \lambda \leq f(2 \sqrt{2} \lambda) + 2\alpha(C) - 2 \lambda.
    \end{align}
    Combining \eqref{ec1_1} with \eqref{ec1_2} proves the inequality in case \textit{(iii)} of Lemma~\ref{calcalpha}.

\begin{bigacase2}
    $R = \emptyset$, $T \neq \emptyset$, $T$ was extended during Step~$C$, and $\left| J \right| \geq 2$.
\end{bigacase2}
    We apply Bounds 2 and 5 to simplify \eqref{logindbreal}:
    \begin{align} \label{ec2_1}
       f(2\sqrt{2} \lambda) + 2 \alpha(C - N(\tnot{1})) - 2 \theta -  2 \lambda \leq \left| \mathcal{I} \right| - 1 + 2 \gap(C, \mathcal{I}, T).
    \end{align}
    We have $\alpha(C - N(\tnot{1})) \geq \theta + \lambda$ since $\lambda = \sum_i \alpha(D_i - N(\tnot{5}))$ and $\alpha(T)$ increased
    by at least $\theta$ during Step~$C$ by adding vertices from $C - N(\tnot{1})$.
    Also, $\lambda \geq 1$ since $J \neq \emptyset$.  Thus we can apply property \freplrs with $r = 2 \sqrt{2}$ to get
    \begin{align} \label{ec2_2}
         f(2 \sqrt{2} \lambda) + 2\alpha(C - N(\tnot{1})) - 2\theta - 2\lambda 
        &\geq f(2\sqrt{2} \alpha(C - N(\tnot{1})) - 2\sqrt{2} \theta) \notag \\
        &\geq f(2\sqrt{2} (b-1)).
    \end{align}
    The last inequality holds because $\theta \leq \alpha(C - N(\tnot{1})) - b + 1$ and $f$ is non-decreasing.
    Now we combine \eqref{ec2_1} with \eqref{ec2_2} to obtain
    \begin{align} \label{ec2_3}
       f(2\sqrt{2} (b - 1)) \leq \left| \mathcal{I} \right| - 1 + 2 \gap(C, \mathcal{I}, T).
    \end{align}
    Then by definition of $b$ we have for $\alpha(C) \geq 2$
    \begin{align} \label{ec2_4}
       2\sqrt{2} (b - 1) = 2\sqrt{2} \left\lceil \frac{\alpha(C) - 1}{2} \right\rceil \geq \sqrt{2} (\alpha(C) - 1).
    \end{align}
    Since we extended $\tnot{1}$ we have $\tnot{1} \in \mathcal{H}_1(C) \cup \mathcal{H}_2(C)$ so by Lemma~\ref{h1h2lessac},
    $\alpha(C - N(\tnot{1})) \leq \alpha(C) - 1 $.  Then since $f$ is non-decreasing, \eqref{ec2_3} and \eqref{ec2_4} simplify to
    \begin{align*}
       f(\sqrt{2} \alpha(C - N(\tnot{1}))) &\leq f(\sqrt{2} (\alpha(C) - 1)) \leq \left| \mathcal{I} \right| - 1 + 2 \gap(C, \mathcal{I}, T).
    \end{align*}
    This proves the inequality in case \textit{(i)} and \textit{(ii)} of Lemma~\ref{calcalpha}.

\begin{bigacase3}
    $R = \emptyset$, $T \neq \emptyset$, $T$ was extended during Step~$C$, $J = \left\{ i \right\}$, and we continued to Substep 3.
\end{bigacase3}
    Then we use Bounds 3 and 5 to simplify \eqref{logindbreal}:
    \begin{align} \label{ec3_1}
       f(\gamma_i \lambda) + 2 \alpha(C - N(\tnot{1})) - 2 \lambda - 2 \theta \leq \left| \mathcal{I} \right| - 1 + 2 \gap(C, \mathcal{I}, T).
    \end{align}
    Since $J \neq \emptyset$, we have $\lambda \geq 1$.  Also, $\lambda + \theta \leq \alpha(C - N(\tnot{1}))$ since $\alpha(T)$ increased
    by at least $\theta$ during Step~$C$ and $\lambda = \sum_i \alpha(C - N(\tnot{5}))$.  Thus we can apply property
    \freplrs with $r = \gamma_i$ to obtain
    \begin{align} \label{ec3_2}
         f(\gamma_i \lambda) + 2\alpha(C - N(\tnot{1})) - 2\theta - 2\lambda &\geq
         f(\gamma_i \alpha(C - N(\tnot{1})) - \gamma_i \theta) + \alpha(C - N(\tnot{1})) - \theta - \lambda \notag \\
        &\geq f(\gamma_i (b-1)) + b - 1 - \lambda.
    \end{align}
    The last inequality holds because $\theta \leq \alpha(C - N(\tnot{1})) - b + 1$ and $f$ is non-decreasing.
    If $\lambda < b - 1$, then using properties \ftaus and \fpones and $\gamma_i \geq 1$ we obtain
    \begin{align} \label{ec3_4}
       f(\gamma_i (b-1)) + b - 1 - \lambda \geq f(b-1) + 1 \geq f(\tau (b-1)) \geq f(\sqrt{2} \alpha(C)).
    \end{align}
    Since $\alpha(C - N(\tnot{1})) \leq \alpha(C)$, we combine
    \eqref{ec3_1}, \eqref{ec3_2}, and \eqref{ec3_4} to prove the inequality in case \textit{(i)} and \textit{(ii)} of Lemma~\ref{calcalpha}.

    Now assume $\lambda \geq b-1$.
    Since $\left| J \right| = 1$ we have $\lambda = \alpha(D_i - N(\qfnot)) \leq \alpha(D_i)$ and since we ran Substep~4 we have
    $\alpha(D_i) \leq b - 1$ so $\lambda = b - 1$.  We are also forced
    to have $\alpha(D_i - N(\qfnot)) = \alpha(D_i)$.
    Assume $Q_i \neq \emptyset$.  Since $D_i \cap N(\qfnot) = D_i \cap N(\qnot{1}{D_i})$ we will color all edges between $\qnot{1}{D_i}$
    and $D_i$ red in Substep~1 of Step~$D_i$, which implies $\qfnot \cap V(D_i) = \emptyset$.  This contradicts that $Q_i \in \mathcal{I}_i$.
    Thus $Q_i = \emptyset$ and so by definition of $\gamma_i$, we have $\gamma_i = 2 \sqrt{2}$.
    Then we combine \eqref{ec3_1} with \eqref{ec3_2} to obtain ($\lambda = b-1$)
    \begin{align} \label{ec3_5}
       f(2 \sqrt{2} (b - 1)) \leq \left| \mathcal{I} \right| - 1 + 2 \gap(C, \mathcal{I}, T).
    \end{align}
    By the inequality in \eqref{ec2_4} we have $2 \sqrt{2} (b - 1) \geq \sqrt{2} (\alpha(C) - 1)$.
    Since $f$ is non-decreasing, \eqref{ec3_5} simplifies to
    \begin{align*}
       f(\sqrt{2} (\alpha(C - 1))) \leq \left| \mathcal{I} \right| - 1 + 2\gap(C, \mathcal{I}, T).
    \end{align*}
    Since $\tnot{1} \in \mathcal{H}_1(C) \cup \mathcal{H}_2(C)$ we have by Lemma~\ref{h1h2lessac} that $\alpha(C-N(\tnot{1})) < \alpha(C)$.
    Thus we have proved the inequality in case \textit{(i)} and \textit{(ii)} of Lemma~\ref{calcalpha}.

\begin{bigacase4}
    $R = \emptyset$, $T \neq \emptyset$, $T$ was extended during Step~$C$, $J = \left\{ i \right\}$, and we did not continue to Substep 3 after Substep 2.
\end{bigacase4}
    In this case, we must have $\tnot{1} \in \mathcal{H}_1(C)$ because we did not run Substep 3.
We apply Bounds 3 and 5 to simplify \eqref{logindbreal}:
    \begin{align} \label{ec4_1}
       f(\gamma_i \lambda) + 2\alpha(C - N(\tnot{1})) - 2
       \lambda - 2\theta &\leq \left| \mathcal{I} \right| - 1 + 2 \gap(C, \mathcal{I}, T).
    \end{align}
    First assume that $\lambda < b - 1$.  We use $\gamma_i \geq 1$ and that $f$ is
     non-decreasing to simplify \eqref{ec4_1} to
    \begin{align} \label{ec4_2}
        f(\lambda) + 2 \alpha(C - N(\tnot{1})) - 2 \lambda - 2 \theta &\leq \left| \mathcal{I} \right| - 1 + 2 \gap(C, \mathcal{I}, T).
    \end{align}
    Because $\alpha(T)$ increased by at least $\theta$ during Step~$C$, we have $\lambda + \theta \leq \alpha(C - N(\tnot{1}))$.
    We use properties \freplrs ($J \neq \emptyset$ so $\lambda \geq 1$), \fpone, and \ftaus to obtain
    \begin{align} \label{ec4_3}
        f(\lambda) + 2 \alpha(C - N(\tnot{1})) - 2 \lambda - 2 \theta &\geq
            f(\alpha(C - N(\tnot{1})) - \theta) + \alpha(C - N(\tnot{1})) - \lambda - \theta \notag \\
          &\geq f(b-1) + b - 1 - \lambda \notag \\
          &\geq f(b-1) + 1 \notag \\
          &\geq f(\tau(b-1)) \notag \\
          &\geq f(\sqrt{2} \alpha(C)).
    \end{align}
    Since $\alpha(C - N(\tnot{1})) \leq \alpha(C)$, combining \eqref{ec4_2} with \eqref{ec4_3}
    proves the inequality in both case \textit{(i)} and \textit{(ii)} of Lemma~\ref{calcalpha}.

    Now assume $\lambda \geq b - 1$ so that $\alpha(D_i - N(\qfnot)) \geq b - 1$.
    Since we did not continue to Substep 3 after Substep 2 we must have
    some component $D_j$ and some $\snot{1} \in \mathcal{H}_1(C)$ with
    $b \leq \alpha(D_j) = \alpha(D_j - N(\snot{1})) \leq \sqrt{2} (b-1)$.
    If $i = j$ then we have $\alpha(D_i) \leq \sqrt{2} (b-1)$ and if
    $i \neq j$ then $\alpha(D_i) \leq \alpha(C) - \alpha(D_j) \leq \alpha(C) - b \leq \sqrt{2} (b - 1)$.  Thus
    \begin{align} \label{ec4_4}
       \alpha(D_i - N(\qfnot)) \geq b - 1 \geq \frac{1}{\sqrt{2}} \sqrt{2}(b-1) \geq \frac{\alpha(D_i)}{\sqrt{2}}.
    \end{align}
    Since
    \begin{align*}
        \alpha(D_i) &\geq 2 \left\lceil \frac{\alpha(D_i) - 1}{2} \right\rceil,
    \end{align*}
    \eqref{ec4_4} implies that
    \begin{align*}
        \alpha(D_i - N( \qnot{1}{D_i} )) \geq \frac{\alpha(D_i)}{\sqrt{2}} &\geq \sqrt{2} \left( \left\lceil \frac{\alpha(D_i) + 1}{2} \right\rceil - 1 \right).
    \end{align*}
    This shows that either $Q_i = \emptyset$ or $\qnot{1}{D_i} \in \mathcal{H}_0(D_i) \cup \mathcal{H}_2(D_i)$.
    If $Q_i \neq \emptyset$ and $\qnot{1}{D_i} \in \mathcal{H}_0(D_i)$ then
    we must have colored all edges between $\qnot{1}{D_i}$ and $D_i$ red in Substep~1 of Step~$D_i$ which contradicts $Q_i \cap V(D_i) \neq \emptyset$.
    Thus either $Q_i = \emptyset$ so $\gamma_i = 2 \sqrt{2}$ or $Q_i \neq \emptyset$ and $\qnot{1}{D_i} \in \mathcal{H}_2(D_i)$ so that
    $\gamma_i = \sqrt{2}$.
    Then \eqref{ec4_1} simplifies to
    \begin{align} \label{ec4_5}
       f(\sqrt{2} \lambda) + 2 \alpha(C - N(\tnot{1})) - 2\lambda - 2\theta \leq \left| \mathcal{I} \right| - 1 + 2 \gap(C, \mathcal{I}, T).
    \end{align}
    Because $\alpha(T)$ increased by at least $\theta$ during Step~$C$, we have $\lambda + \theta \leq \alpha(C - N(\tnot{1}))$.
    Using property \freplrs ($J \neq \emptyset$ so $\lambda \geq 1$) we obtain
    \begin{align} \label{ec4_6}
       f(\sqrt{2} \lambda) + 2\alpha(C - N(\tnot{1})) - 2\lambda - 2\theta
         \geq f(\sqrt{2} \alpha(C - N(\tnot{1})) - \sqrt{2} \theta)
         \geq f(\sqrt{2} (b-1)).
    \end{align}
    Combining \eqref{ec4_5} with \eqref{ec4_6} gives
    \begin{align*}
       f(\sqrt{2} (b-1)) \leq \left| \mathcal{I} \right| - 1 + 2 \gap(C, \mathcal{I}, T).
    \end{align*}
    Since $\tnot{1} \in \mathcal{H}_1(C)$ we have $\sqrt{2} (b-1) \geq \alpha(C - N(\tnot{1}))$ so we have proved the
    bound in case \textit{(i)} of Lemma~\ref{calcalpha}.

\begin{bigacase5}
    $R = \emptyset$, $T \neq \emptyset$, $T$ was extended during Step~$C$, and
    $J = \emptyset$.
\end{bigacase5}
We apply Bounds 4 and 5 to \eqref{logindbreal} and then use $\lambda =
0$ and property \fzeros to obtain
\begin{align*}
   2\alpha(C - N(\tnot{1})) - 2\theta \leq \left| \mathcal{I} \right| - 1 + 2\gap(C, \mathcal{I}, T).
\end{align*}
Then $\theta \leq \alpha(C - N(\tnot{1})) - b + 1$ so
\begin{align} \label{ec5_1}
   2(b-1) \leq \left| \mathcal{I} \right| -1 + 2 \gap(C, \mathcal{I}, T).
\end{align}
By property \fsmall, $f(\sqrt{2} \alpha(C)) \leq 2(b-1)$.
 Since $\alpha(C - N(\tnot{1})) \leq \alpha(C)$ we have that \eqref{ec5_1} simplifies to
\begin{align*}
   f( \sqrt{2} \alpha(C - N(\tnot{1}))) \leq \left| \mathcal{I} \right| - 1 + 2\gap(C, \mathcal{I}, T).
\end{align*}
This proves the inequality in cases \textit{(i)} and \textit{(ii)}
in Lemma~\ref{calcalpha}.

\begin{bigacase6}
    $R = \emptyset$, $T \neq \emptyset$ and $\tnot{1} = \tnot{5}$.
\end{bigacase6}
    Since $\tnot{1} = \tnot{5}$, we have $\theta = 0$.

   We apply Bounds 4 and 5 to simplify \eqref{logindbreal}:
   \begin{align} \label{ec6_1}
       f(\lambda) + 2 \alpha(C - N(\tnot{1})) - 2 \lambda &\leq \left| \mathcal{I} \right| - 1 + 2 \gap(C, \mathcal{I}, T).
   \end{align}
   Since $\lambda \leq \alpha(C - N(\tnot{1}))$, we use property \freplones to obtain
   \begin{align} \label{ec6_1.5}
      f(\lambda) + 2\alpha(C - N(\tnot{1})) - 2\lambda
        \geq f(\alpha(C - N(\tnot{1}))).
   \end{align}
   If $\tnot{1} \in \mathcal{H}_1(C)$, then \eqref{ec6_1.5} and \eqref{ec6_1} prove the inequality in case \textit{(i)} of
   Lemma~\ref{calcalpha}.

   So assume $\tnot{1} \in \mathcal{H}_2(C)$.  If $\lambda < \alpha(C - N(\tnot{1}))$
   we can apply property \freplones and \fpones to obtain
   \begin{align} \label{ec6_2}
      f(\lambda) + 2\alpha(C - N(\tnot{1})) - 2\lambda
        \geq f(\alpha(C - N(\tnot{1}))) + 1
        \geq f(\tau \alpha(C - N(\tnot{1}))).
   \end{align}
   Because $\tau \geq \sqrt{2}$, we can combine \eqref{ec6_2} with \eqref{ec6_1}
    to prove the inequality in case \textit{(ii)} of Lemma~\ref{calcalpha}.

   So assume $\lambda = \alpha(C - N(\tnot{1}))$.  Since $\tnot{1} \in \mathcal{H}_2(C)$ we have
   $\lambda = \alpha(C - N(\tnot{1})) \geq \sqrt{2} (b-1) > \alpha(C)/2$.  Then $\left| J \right| \geq 2$ since
   each component of $G[A^4_C]$ has independence number at most $\alpha(C)/2$ and $\lambda > \alpha(C)/2$.
   Using $\left| J \right| \geq 2$ we can apply Bounds 2 and 5 to Claim~\ref{logindb} to get
   \begin{align*}
      f(2\sqrt{2} \lambda) &\leq \left| \mathcal{I} \right| - 1 + 2 \gap(C, \mathcal{I}, T).
   \end{align*}
   Since $\lambda = \alpha(C - N(\tnot{1}))$, we have proved the bound in case \textit{(ii)} of Lemma~\ref{calcalpha}.

\begin{bigacase7}
    $R \neq \emptyset$ and $T \neq \emptyset$.
\end{bigacase7}
First, $\tnot{1} \in \mathcal{H}_2(C)$ by Lemma~\ref{noh1andstep4} and $\tnot{4} = \tnot{1}$
by Lemma~\ref{noh2extandstep4}.
Then we apply Bounds 4 and 6 to simplify \eqref{logindbreal}:
\begin{align} \label{ec7_0}
   f(\lambda) + 2\alpha(C - N(\tnot{1})) - 2 \lambda - 2b \leq \left| \mathcal{I} \right| - 2 + 2 \gap(C, \mathcal{I}, T).
\end{align}
Because $b \leq \alpha(\rnot{5})$ and $Q_i$ is $T$ or $R$ we have
$\cup_i (D_i - N(\qfnot)) \bigcup \rnot{5} \subseteq C - N(\tnot{1})$
so that $\lambda + b \leq \alpha(C - N(\tnot{1}))$.  (Note that $\tnot{1} \notin \mathcal{H}_0(C)$ so there are no red edges between
$\tnot{1}$ and $\rnot{5}$.)

Assume $\alpha(C - N(\tnot{1})) = b$ so that $\lambda = 0$.  Since $\tnot{1} \in \mathcal{H}_2(C)$ we know $b = \alpha(C - N(\tnot{1})) \geq \sqrt{2}(b-1)$ so $b$ is one or two
so $\alpha(C - N(\tnot{1})) \leq \alpha(C) \leq 2b \leq 4$.  Thus $f(\sqrt{2} \alpha(C - N(\tnot{1}))) \leq f(4\sqrt{2}) \leq 1$ by property \fbase.  The left side of
\eqref{ec7_0} is zero (using property \fzero) so adding one to both sides of \eqref{ec7_0} simplifies to
\begin{align*}
f(\sqrt{2}\alpha(C - N(\tnot{1}))) \leq 1 \leq \left| \mathcal{I} \right| - 1 + 2 \gap(C, \mathcal{I}, T)
\end{align*}
which is the inequality in case \textit{(ii)} of Lemma~\ref{calcalpha}.

We now assume $\alpha(C - N(\tnot{1})) > b$ and we use property \freplones to obtain
\begin{align} \label{ec7_1}
   f(\lambda) + 2\alpha(C - N(\tnot{1})) - 2 \lambda - 2b \geq
      f(\alpha(C - N(\tnot{1})) - b).
\end{align}
Combining \eqref{ec7_0} with \eqref{ec7_1} and adding $1$ to both sides we obtain
\begin{align} \label{ec7_1.5}
   f(\alpha(C - N(\tnot{1})) - b) + 1 \leq \left| \mathcal{I} \right| - 1 + 2 \gap(C, \mathcal{I}, T).
\end{align}
Using property \fpones this transforms into
\begin{align} \label{ec7_1.75}
   f(\tau \alpha(C - N(\tnot{1})) - \tau b) \leq \left| \mathcal{I} \right| - 1 + 2 \gap(C, \mathcal{I}, T).
\end{align}

We now apply property~\ftauineqs with $x = b$.  Assume $f(2\sqrt{2} b) \leq f(\tau)$.  
Then $\alpha(C - N(\tnot{1})) - b \geq 1$ and $\alpha(C - N(\tnot{1})) \leq \alpha(C) \leq 2b$ imply that
\begin{align} \label{ec7_1.8}
   f(\sqrt{2} \alpha(C- N(\tnot{1}))) \leq f(2\sqrt{2} b) \leq f(\tau) \leq f(\tau \alpha(C - N(\tnot{1})) - \tau b).
\end{align}
Combining \eqref{ec7_1.75} and \eqref{ec7_1.8} we obtain
\begin{align*}
f(\sqrt{2} \alpha(C - N(\tnot{1}))) \leq \left| \mathcal{I} \right| - 1 + 2 \gap(C, \mathcal{I}, T),
\end{align*}
which is the inequality in case \textit{(ii)} of Lemma~\ref{calcalpha}.

Therefore, we can assume the other case of property \ftauineqs holds, namley that
$\left\lceil \sqrt{2} (b - 1) \right\rceil \geq \frac{\tau}{\tau - \sqrt{2}} b$.  Since
$\tnot{1} \in \mathcal{H}_2(C)$ we have $\alpha(C - N(\tnot{1})) \geq \sqrt{2} (b-1)$ so
\begin{align*}
\alpha(C - N(\tnot{1})) \geq \frac{\tau}{\tau - \sqrt{2}} b.
\end{align*}
Manipulating this inequality, we find
\begin{align*}
\tau \alpha(C - N(\tnot{1})) - \tau b \geq \sqrt{2} \alpha(C - N(\tnot{1}))
\end{align*}
so that
\begin{align*}
f(\tau \alpha(C - N(\tnot{1})) - \tau b) \geq f(\sqrt{2} \alpha(C - N(\tnot{1}))).
\end{align*}
Combining this with \eqref{ec7_1.75} we obtain
\begin{align*}
f(\sqrt{2} \alpha(C - N(\tnot{1}))) \leq \left| \mathcal{I} \right| - 1 + 2 \gap(C, \mathcal{I}, T),
\end{align*}
which is the inequality in case \textit{(ii)} of Lemma~\ref{calcalpha}

\begin{bigacase8}
  $R \neq \emptyset$ and $T = \emptyset$.
\end{bigacase8}

Using $T = \emptyset$ and $\theta = 0$, we apply Bounds 4 and 6 to
simplify \eqref{logindbreal}
\begin{align} \label{ec8_1}
   f(\lambda) + 2 \alpha(C) - 2 \lambda - 2b \leq \left| \mathcal{I} \right| - 1 + 2 \gap(C, \mathcal{I}, \emptyset).
\end{align}
Since $\lambda = \sum_i \alpha(D_i - N(\rnot{5}))$ and $b \leq \alpha(\rnot{5})$ we have $\lambda + b \leq \alpha(C)$.
We use property \freplones to obtain
\begin{align} \label{ec8_2}
   f(\alpha(C) - b) &\leq f(\alpha(C) - b) + \alpha(C) - b - \lambda \leq f(\lambda) + 2 \alpha(C) - 2 \lambda - 2b.
\end{align}
We then combine \eqref{ec8_1} with \eqref{ec8_2} and add $1$ to both sides to obtain
\begin{align*}
   f(\alpha(C) - b) + 1 \leq \left| \mathcal{I} \right| + 2 \gap(C, \mathcal{I}, T).
\end{align*}
Then property \ftaubases shows $f(2 \sqrt{2} \alpha(C)) \leq f(\alpha(C) - b) + 1$ so we have proved the inequality in
case \textit{(iii)} of Lemma~\ref{calcalpha}.
\qed

\bibliographystyle{amsplain}
\bibliography{ref.bib}

\appendix

\section{The $\alpha(G) = 5$ algorithm}

Let $G$ be a graph with $\alpha(G) = 5$.
At any stage of the algorithm, let $U$ be the set of vertices of $G$ not yet added to any set in $\mathcal{F}$.
Initially, $U = \left| V(G) \right|$.
\begin{itemize}
    \item Step 1: Let $\mathcal{F}$ be a maximal family of pairwise touching connected sets, with $\alpha(T) \leq 2$ and
        $\left| T \right| \leq 2 \alpha(T) - 1$ for each $T \in \mathcal{F}$.
        We consider such a family with the maximum size, that is the maximum value $\left| \mathcal{F} \right|$.
        Set $U = V(G) - \cup_{T \in \mathcal{F}} T$.

    \item Step 2:
        For each $T \in \mathcal{F}$ with $\left|T\right| = 1$, we extend $T$ into $U$ by 1
        if $T$ touches $U$ and $T$ does not dominate $U$.
        We then repeat Step 2 until we have tried to extend every set.
\end{itemize}

There are now three cases.  Case I is when $G[U]$ has no component with independence
number 5 but has a component with independence number 4.
Case II is when $G[U]$ has no component with independence number 4 or 5.
Case III is when $G[U]$ is connected and has independence number 5.
We run different steps in the three cases.

Here are the steps in Case I:
\begin{itemize}
    \item Step I.3: For any $T \in \mathcal{F}$
        and any component $C$ of $G[U]$ with $\alpha(C) = 4$ and $\alpha(C - N(T)) = 3$, we extend $T$ into $C$ by 1.
        We then update $U$ and continue Step I.3 until no pair $T,C$ satisfies the condition.

    \item Step I.4: Break $U$ by 3.

    \item Step I.5: Break $U$ by 2.

    \item Step I.6: Break $U$ by 1.

\end{itemize}

Here are the steps in Case II:
\begin{itemize}
    \item Step II.3: For any $T \in \mathcal{F}$
        and any component $C$ of $G[U]$ with $\alpha(C) = 3$ and $\alpha(C - N(T)) = 2$, we extend $T$ into $C$ by 1.
        We then update $U$ and continue Step II.3 until no pair $T,C$ satisfies the condition.

    \item Step II.4: Break $U$ by 2.

    \item Step II.5: Break $U$ by 1.
\end{itemize}

Here are the steps for Case III:
\begin{itemize}
    \item Step III.3: Break $U$ by 4.

    \item Step III.4: For any $T \in \mathcal{F}$ with $\alpha(T) = 2$
        and any component $C$ of $G[U]$ with $\alpha(C) = 3$ and $\alpha(C - N(T)) = 2$, we extend $T$ into $C$ by 1.
        We then update $U$ and continue Step III.4 until no pair $T,C$ satisfies the condition.

    \item Step III.5: Break $U$ by 2.

    \item Step III.6: Break $U$ by 1.
\end{itemize}

We claim that using this algorithm, we can find a complete minor of
$G$ of size $\frac{5n}{38}$.

\medskip

\newtheorem{a5claimctr}{Claim}
\newtheorem{a5claimF1touch}[a5claimctr]{Claim}
\newtheorem{a5claimF1acceptable}[a5claimctr]{Claim}
\newtheorem{a5claimseo}[a5claimctr]{Claim}
\newtheorem{a5claimindleq5}[a5claimctr]{Claim}
\newtheorem{a5claimT3touch}[a5claimctr]{Claim}
\newtheorem{a5claimT1small}[a5claimctr]{Claim}
\newtheorem{a5claimT1smallI}[a5claimctr]{Claim}
\newtheorem{a5claimindI.II}[a5claimctr]{Claim}
\newtheorem{a5claimI.II}[a5claimctr]{Claim}
\newtheorem{a5claim11dom}[a5claimctr]{Claim}
\newtheorem{a5claimindIII}[a5claimctr]{Claim}
\newtheorem{a5claimIII}[a5claimctr]{Claim}

We set up some notation for the sets in the family at different stages of the algorithm.
Let $\mathcal{F}$ be the family at the end of the algorithm, and
let $G_b$ be the spanning subgraph of blue edges at the end of the algorithm.
Let $H$ be the graph obtained from $G_b$ by contracting
each set in $\mathcal{F}$.  For each $T \in \mathcal{F}$ we use $T$ to denote both the set in $V(G)$ and the vertex of $H$ obtained
by contracting $T$.

Let
\begin{align*}
  \mathcal{F}(s, a) = \left\{ T : T \text{ is a set in the final family}, T \text{ was first added during step s}, \ext(T) = a \right\},
\end{align*}
where $s\in \left\{1,I.3,\ldots,I.6,II.3,\ldots,II.5,III.3,\ldots,III.6\right\}$.

Define
\begin{align*}
  \mathcal{F}(s) = \cup_{a} \mathcal{F}(s,a).
\end{align*}
Note that we do not include the original sets which were added to the family
in step $s$, but include the final configuration of the set which includes
the original plus any extensions that were made.
Define $\mathcal{F}_{s}$ to be the family right after Step $s$.
Let $U_s$ be the set of vertices not yet added into any set in $\mathcal{F}$
at the end of Step $s$.
Set $n = \left| V(G) \right|$ and let $\lambda n$ be the size of the largest
complete minor of $G$.

\begin{a5claimF1touch} \label{a5claimF1touch}
    $\mathcal{F}(1)$ is a pairwise touching family in $G_b$.
\end{a5claimF1touch}

\begin{a5claimseo} \label{a5claimseo}
    $\mathcal{F}$ is a partial simplicial elimination ordering in $G_b$, so $H$ is a chordal graph.
\end{a5claimseo}

\begin{proof}
    $\mathcal{F}(1)$ is pairwise touching so trivially is a partial simplicial elimination ordering.  Then all breakings
    are acceptable so that by Theorem~\ref{pseopreserved} $\mathcal{F}$ is a partial simplicial elimination ordering.  Then we form a
    simplicial elimination ordering of the vertices of $H$, similarly to the proof of Theorem~\ref{acceptablegood}.
\end{proof}

\begin{a5claimindleq5} \label{a5claimindleq5}
    Let $\mathcal{I} \in \IND_{G_b}(\mathcal{F})$.  Then $\sum_{T \in \mathcal{I}} \ext(T) \leq 5$.
\end{a5claimindleq5}

\begin{proof}
    Can be checked by case analysis.
\end{proof}

\begin{a5claimT3touch} \label{a5claimT3touch}
    In the Case III algorithm (even if $U_2$ is not connected or $U_2$ has independence
    number less than 5), each $T \in \mathcal{F}(1,3)$ touches
    every set with extension number at least 2 using blue edges.
\end{a5claimT3touch}

\begin{proof}
    Consider a $T \in \mathcal{F}(1,3)$.
    Then $T$ touches every set in $\mathcal{F}(1)$ by Claim \ref{a5claimF1touch} and by
    Claim \ref{a5claimindleq5} every set in $\mathcal{F}(III.3)$.
    So we only need to show that $T$ touches every set in $\mathcal{F}(III.5)$.
    Since $\ext(T) = 3$, we must have extended $T$ in step III.4.  By Lemma~\ref{extdecalpha} we know
    $T$ touches each set in $\mathcal{F}(III.5)$ using edges of $G$.  It is impossible for these edges
    to be colored red because $\alpha(C - N(T))$ has been reduced to 1.
\end{proof}

\begin{a5claimT1small} \label{a5claimT1small}
    In the Case III algorithm (even if $U_2$ is not connected or $U_2$ has independence
    number less than 5),
    $\left| \mathcal{F}(1,2) \right| + \left| \mathcal{F}(1,3) \right|
        \leq (8\lambda - 1) n$.
\end{a5claimT1small}

\begin{proof}
    Let $f : V(H) \rightarrow \mathbb{Z}^+$ be defined by
    \begin{align*}
        f(T) = \begin{cases}
            \left| T \right| + 1 & T \in \mathcal{F}(1,2) \cup \mathcal{F}(1,3), \\
            \left| T \right| & \text{otherwise}.
        \end{cases}.
    \end{align*}

    Let $\mathcal{I} \in \IND_{G_b}(\mathcal{F})$.
    If $\mathcal{I}$ contains just a single set, the largest
    extension number of a set is 4 which has 7 vertices so
    we have $f(\mathcal{I}) \leq 8$.  Assume $\mathcal{I}$ has at least two sets, and
    assume $\mathcal{I}$ does not contain any set in
    $\mathcal{F}(1,2) \cup \mathcal{F}(1,3)$ Then
    $f(\mathcal{I}) \leq 2 \sum_{T \in \mathcal{I}} \ext(T)) - \left| \mathcal{I} \right| \leq 8$.
    Now assume $\mathcal{I}$ contains some sets in
    $\mathcal{F}(1,2) \cup \mathcal{F}(1,3)$.
    Since $\mathcal{F}(1,2) \cup \mathcal{F}(1,3) \subseteq \mathcal{F}(1)$
    are all pairwise touching, $\mathcal{I}$ can contain at most one of these sets.

    Assume $T \in \mathcal{I} \cap \mathcal{F}(1,2)$.
    Using Claim \ref{a5claimindleq5}, there are two possibilities.
    One possibility is $\mathcal{I} = \left\{ T,Q,R \right\}$
    with $Q \in \mathcal{F}(III.5)$ and $R \in \mathcal{F}(III.6)$.
    For this $\mathcal{I}$, we have
    $f(\mathcal{I}) = \left| T \right| + 1 + \left| Q \right| + \left| R \right| \leq 8$.
    The other possibility is
    $\mathcal{I} = \left\{ T,P,Q,R \right\}$ where $P,Q,R \in \mathcal{F}(III.6)$.
    For this $\mathcal{I}$, we have
    $f(\mathcal{I}) = \left| T \right| + 1 + 3 \leq 7$.
    Assume $T \in \mathcal{I} \cap \mathcal{F}(1,3)$.
    By Claim \ref{a5claimT3touch},
    the only possibility is $\mathcal{I} = \left\{ T,R \right\}$
    with $R \in \mathcal{F}(III.6)$.  But for this $\mathcal{A}$,
    $f(\mathcal{I}) \leq \left| T \right| + 1 + \left| R \right| \leq 7$.

    Thus $f(\mathcal{I}) \leq 8$, so by Theorem~\ref{perfectweight} we have
    \begin{align*}
        \lambda n &\geq \frac{f(H)}{8} \geq \frac{n + \left| \mathcal{F}(1,2) \cup \mathcal{F}(1,3) \right|}{8}.
    \end{align*}
\end{proof}

\begin{a5claimT1smallI} \label{a5claimT1smallI}
    In Cases I and II,  $\left| \mathcal{F}(1,2) \right| + \left| \mathcal{F}(1,3) \right| \leq (8 \lambda - 1) n$.
\end{a5claimT1smallI}

\begin{proof}
    Consider that instead of running the algorithm with Case I or II,
    we ran the Case III algorithm.  Let $\mathcal{F}'$ be the
    family produced by the Case III algorithm.  Then by Claim \ref{a5claimT1small},
    $\left| \mathcal{F}'(1,2) \right| + \left| \mathcal{F}'(1,3) \right| \leq (8 \lambda - 1) n$.

    We have $\mathcal{F}(1) = \mathcal{F}'(1)$ and also have $\mathcal{F}(1,1) = \mathcal{F}'(1,1)$.
    No possible extension of sets in $\mathcal{F}(1,1)$ or $\mathcal{F}'(1,1)$ can happen in steps I.3, II.3, or III.4 because after Step 2,
    for every $T$ in $\mathcal{F}(1,1)$ or $\mathcal{F}'(1,1)$ and each component $C$
    of $U_2$ either $T$ dominates $C$ or $T$ does not touch $C$.
\end{proof}

\begin{a5claimindI.II} \label{a5claimindI.II}
    Assume that the algorithm selected Case I or Case II.
    Then let $\mathcal{I} \in \IND_{G_b}(\mathcal{F})$.
    Then $\mathcal{I}$ satisfies one of the following conditions.
    \begin{itemize}
        \item $\left| \mathcal{I} \right| = 1$,
        \item $\left| \mathcal{I} \right| \geq 3$,
        \item $\mathcal{I} = \left\{ T, R \right\}$ with $\ext(T) + \ext(R) < 5$,
        \item $\mathcal{I} = \left\{ T, R \right\}$ with $T \in \mathcal{F}(1,2)$ and $R \in \mathcal{F}(I.4)$  (in Case I),
        \item $\mathcal{I} = \left\{ T, R \right\}$ with $T \in \mathcal{F}(1,3)$ and $R \in \mathcal{F}(I.5)$  (in Case I),
        \item $\mathcal{I} = \left\{ T, R \right\}$ with $T \in \mathcal{F}(1,3)$ and $R \in \mathcal{F}(II.4)$ (in Case II).
    \end{itemize}
\end{a5claimindI.II}

\begin{proof}
    Say $\mathcal{I} = \left\{ T, R \right\}$ with $\ext(T) + \ext(R) \geq 5$.
    We want to show that we must be in the last three options.
    Since $\ext(T) + \ext(R) \geq 5$ and the algorithms in Case I or II never
    produce a set with extension number 4,
    we must have $\ext(T) = 3$ and $\ext(R) = 2$ or $\ext(T) = 2$ and
    $\ext(R) = 3$.
    Since $\mathcal{F}(1)$ is pairwise touching in $G_b$,
    at most one of them can be in $\mathcal{F}(1)$.
    Now consider cases separately.  In Case I, say $T \in \mathcal{F}(1,2)$.
    Then we must have $R \in \mathcal{F}(I.4)$ since $\ext(T) = 2$ and
    $\ext(R) = 3$.  Consider $T \in \mathcal{F}(1,3)$.
    Then the only possibility of a set with extension number 2 for $R$
    is $\mathcal{F}(I.5)$.
    In Case II, the only place sets with extension number 3 are created is by
    extending a set in $\mathcal{F}(1)$.  Thus $T \in \mathcal{F}(1,3)$ and
    the only possibility for $R$ is $\mathcal{F}(II.4)$.
\end{proof}

\begin{a5claimI.II}
    Assume that the algorithm selected Case I or II.  Then $\lambda \geq \frac{2}{15}$.
\end{a5claimI.II}

\begin{proof}
    Use the following weight function $f: V(H) \rightarrow \mathbb{Q}^+$ by
    \begin{equation*}
        f(T) = \begin{cases}
            \left| T \right| - 1 & T \in \mathcal{F}(1,3), \\
            \left| T \right| & \text{ otherwise. }
           \end{cases}
    \end{equation*}
    Using Claim \ref{a5claimindI.II}, $\alpha_f(H) \leq 7$ so by
    Theorem~\ref{perfectweight} we have
    \begin{align*}
        n - \left| \mathcal{F}(1,3) \right| &\leq 7 \lambda n.
    \end{align*}
    Combining with Claim \ref{a5claimT1smallI} we have
    \begin{align*}
        n - (8 \lambda n - n) &\leq n -  \left| \mathcal{F}(1,3) \right| \leq 7 \lambda n
    \end{align*}
    implying the claim.
\end{proof}

\begin{a5claim11dom} \label{a5claim11dom}
    Assume the algorithm selected Case III.  Then every $T \in \mathcal{F}(1,1)$ dominates $U_2$ in $G_b$.
\end{a5claim11dom}

\begin{proof}
    Consider $T = \left\{x \right\} \in \mathcal{F}(1,1)$.
    In Step 2 we tried to extend $T$ but failed.  Since we are in
    Case III, $U_2$ is connected and $\alpha(U_2) = 5$.
    So because $T$ was not extended in Step 2, we have $T$ dominating $U_2$ or $T$
    does not touch $U_2$ in $G$.
    If $T$ does not touch $U_2$ in $G$,
    we can form an independent set of size 6 by
    combining $T$ with an independent set in $U_2$ of size 5.
    This contradicts $\alpha(G) = 5$, so that $T$ dominates $U_2$ in $G$.
    Coloring edges red takes place during a breaking, but only if $\alpha(C - N(T)) = \alpha(C)$.  Since $T$ dominates $U_2$, we always
    have $\alpha(C - N(T)) = 0 < \alpha(C)$ so no edges incident to $T$ are ever colored red.  Thus $T$ dominates $U_2$ in $G_b$.
\end{proof}

\begin{a5claimindIII} \label{a5claimindIII}
    Assume that the algorithm selected Case III.
    The possible subfamilies in $\IND_{G_b}(\mathcal{F})$ are a subset
    of one of the following cases:
    \begin{enumerate}
        \item One set from $\mathcal{F}(1,1)$,
        \item One set from $\mathcal{F}(1,2)$, one set from $\mathcal{F}(III.5)$, and one set from $\mathcal{F}(III.6)$,
        \item One set from $\mathcal{F}(1,2)$ and three sets from $\mathcal{F}(III.6)$,
        \item One set from $\mathcal{F}(1,3)$ and two sets from $\mathcal{F}(III.6)$,
        \item One set from $\mathcal{F}(III.3)$ and one set from $\mathcal{F}(III.6)$,
        \item Two sets from $\mathcal{F}(III.5)$ and one set from $\mathcal{F}(III.6)$,
        \item One set from $\mathcal{F}(III.5)$ and three sets from $\mathcal{F}(III.6)$,
        \item Five sets from $\mathcal{F}(III.6)$.
    \end{enumerate}
\end{a5claimindIII}

\begin{proof}
    Let $\mathcal{I} \in \IND_{G_b}(\mathcal{F})$.
    Assume $\mathcal{I} \cap \mathcal{F}(1) = \emptyset$.
    Then using Claim \ref{a5claimindleq5}, conditions 5 - 8 list all possibilities.
    So assume $\mathcal{I} \cap \mathcal{F}(1) \neq \emptyset$.
    By Claim \ref{a5claimF1touch}, we can include at most one set from $\mathcal{F}(1)$.
    Say $\mathcal{I} \cap \mathcal{F}(1) = \left\{ T \right\}$.
    Consider $T \in \mathcal{I} \cap \mathcal{F}(1,1)$.
    Then by Claim \ref{a5claim11dom}, $T$ dominates $U_2$ so
    $\left| \mathcal{I} \right| = 1$.
    Consider $T \in \mathcal{I} \cap \mathcal{F}(1,2)$.
    Then conditions 2 and 3 cover the two possibilities.
    Consider $T \in \mathcal{I} \cap \mathcal{F}(1,3)$.
    Then $T$ touches every set in $\mathcal{F}(III.5)$ by Claim~\ref{a5claimT3touch}.
    Thus the maximal non-touching family extending $\left\{ T \right\}$
    is adding two sets from $\mathcal{F}(III.6)$.
\end{proof}

\begin{a5claimIII} \label{a5claimIII}
    Assume that the algorithm selected Case III.  Then $\lambda \geq \frac{5}{38}$.
\end{a5claimIII}

\begin{proof}
    First,
    \begin{align*}
        n \leq \left| \mathcal{F}(1,1) \right| + 3 \left|\mathcal{F}(1,2) \right| + 5 \left|\mathcal{F}(1,3) \right| + 7 \left| \mathcal{F}(III.3) \right|
        + 3 \left| \mathcal{F}(III.5) \right| + \left| \mathcal{F}(III.6) \right|.
    \end{align*}

    Now define $f : V(H) \rightarrow \mathbb{Z}^+$ to be:
    \begin{align*}
        f(T) = \begin{cases}
            38 & T \in \mathcal{F}(1,1), \\
            21 & T \in \mathcal{F}(1,2), \\
            32 & T \in \mathcal{F}(1,3), \\
            35 & T \in \mathcal{F}(III.3), \\
            14 & T \in \mathcal{F}(III.5), \\
            3  & T \in \mathcal{F}(III.6).
        \end{cases}
    \end{align*}
    Then considering Claim \ref{a5claimindIII}, we know $\alpha_f(H) \leq 38$.
    Thus using Theorem~\ref{perfectweight}, we have
    \begin{align*}
        38 \lambda n &\geq 38 \left| \mathcal{F}(1,1) \right| + 21 \left| \mathcal{F}(1,2) \right| +
          32 \left| \mathcal{F}(1,3) \right| + 35 \left| \mathcal{F}(III.3) \right| +
          14 \left| \mathcal{F}(III.5) \right| + 3 \left| \mathcal{F}(III.6) \right|
    \end{align*}
    giving
    \begin{align*}
          38 \lambda n &\geq 5n + 33 \left| \mathcal{F}(1,1) \right| + 6 \left| \mathcal{F}(1,2) \right| + 7 \left| \mathcal{F}(1,3) \right|
            - \left| \mathcal{F}(III.5) \right| - 2 \left| \mathcal{F}(III.6) \right|.
    \end{align*}
    Thus if we can show that
    \begin{align*}
        \left| \mathcal{F}(III.5) \right| + 2 \left|  \mathcal{F}(III.6) \right| &\leq 6 \left| \mathcal{F}(1,2) \right|
        + 7 \left|  \mathcal{F}(1,3) \right|
    \end{align*}
    we will have $38 \lambda n \geq 5 n$ proving the claim.

    Define $\mathcal{F}' = \mathcal{F}_{2} \cup \mathcal{F}(III.6)$.
    Define $L$ to be the graph formed by starting with $G_b[ \cup_{T \in \mathcal{F}'} T]$ and contracting each set of $\mathcal{F}'$.
    Then $\mathcal{F}'$ is a partial simplicial elimination ordering, so that $L$ is a chordal graph.

    Define $f : V(L) \rightarrow \mathbb{Z}^+$ by
    \begin{align*}
        f(T) = \begin{cases}
            5 & T \in \mathcal{F}_{2} \text{ and } \ext(T) = 1, \\
            2 & T \in \mathcal{F}_{2} \text{ and } \ext(T) = 2, \\
            1 & T \in \mathcal{F}(III.6).
        \end{cases}
    \end{align*}
    Consider $\mathcal{I} \in \IND_{L}(\mathcal{F}')$.
    Say $T \in \mathcal{I} \cap \mathcal{F}_{2}$.
    If $\ext(T) = 1$ then we did not extend $T$ in Step III.4 so
    $T \in \mathcal{F}(1,1)$ so by Claim \ref{a5claim11dom},
    $T$ dominates $U_2$ so $\left| \mathcal{I} \right| = 1$ so $f(\mathcal{I}) = 5$.
    If $\ext(T) = 2$ then we could have $\mathcal{I} = \left\{ T,P,Q,R \right\}$
    with $P,Q,R \in \mathcal{F}(III.6)$.  Then $f(\mathcal{I}) = 5$.
    If $\mathcal{I} \cap \mathcal{F}_{2} = \emptyset$ then $\mathcal{I}$ can have
    at most five sets from $\mathcal{F}(III.6)$
    (by Lemma~\ref{a5claimindleq5}) so $f(\mathcal{I}) \leq 5$.

    $L$ is chordal and $\alpha_f(L) \leq 5$ so by Theorem~\ref{perfectweight}, $\omega(L) \geq \frac{f(L)}{5}$.  This
    clique in $L$ is a pairwise touching subfamily of $\mathcal{F}'$
    with size at least $\frac{f(L)}{5}$.  This pairwise touching
    subfamily is a candidate for the choice of a family in Step 1.
    By the maximum choice in Step 1, we know
    \begin{align*}
        \left| \mathcal{F}(1) \right| &\geq \frac{f(L)}{5} \geq \frac{2 \left| \mathcal{F}(1) \right| +
              3 \left| \mathcal{F}(1,1) \right| + \left| \mathcal{F}(III.6) \right|}{5}.
    \end{align*}
    Thus
    \begin{align} \label{a5eqT7}
              3 \left| \mathcal{F}(1,2) \right| + 3 \left| \mathcal{F}(1,3) \right| &\geq \left| \mathcal{F}(III.6) \right|.
    \end{align}

    Now define $\mathcal{F}' = \mathcal{F}(1,1) \cup \mathcal{F}(1,2) \cup \mathcal{F}(III.5) \cup \mathcal{F}(III.6)$.
    Define $L = G_b[ \cup_{T \in \mathcal{F}'} T]$.
    Again, $\mathcal{F}'$ is a partial simplicial elimination ordering in $G_b[\cup_{T \in \mathcal{F}'} T]$ so that $L$ is a chordal graph.

    We then consider the weight function $f : V(L) \rightarrow \mathbb{Z}^+$
    \begin{align*}
        f(T) = \begin{cases}
            5 & T \in \mathcal{F}(1,1), \\
            2 & T \in \mathcal{F}(1,2), \\
            2 & T \in \mathcal{F}(III.5), \\
            1 & T \in \mathcal{F}(III.6).
        \end{cases}
    \end{align*}
    Then using Claim \ref{a5claimindIII},
    we have $f(\mathcal{I}) \leq 5$ for each $\mathcal{I} \in \IND_{G_b}(\mathcal{F}')$.
    $L$ is chordal and $\alpha_f(L) \leq 5$ so by Theorem~\ref{perfectweight} we have a clique in $L$ of size
    $\frac{f(L)}{5}$.  This clique corresponds to a pairwise touching subfamily of $\mathcal{F}'$ of size
    $\frac{f(L)}{5}$.  Again, this subfamily is a possibility for the family in Step 1.
    By the maximum choice in Step 1,
    \begin{align*}
         \left| \mathcal{F}(1) \right| &\geq \frac{5 \left| \mathcal{F}(1,1) \right| + 2 \left| \mathcal{F}(1,2) \right| +
         2 \left| \mathcal{F}(III.5) \right| + \left| \mathcal{F}(III.6) \right|}{5},
    \end{align*}
    implying
    \begin{align} \label{a5eqT6T7}
          3 \left| \mathcal{F}(1,2) \right| + 5 \left| \mathcal{F}(1,3) \right|
          &\geq 2 \left| \mathcal{F}(III.5) \right| + \left| \mathcal{F}(III.6) \right|.
    \end{align}
    Using 1.5 \eqref{a5eqT7} + 0.5 \eqref{a5eqT6T7} we obtain
    \begin{align*}
          6 \left| \mathcal{F}(1,2) \right| + 7 \left| \mathcal{F}(1,3) \right|
            &\geq \left| \mathcal{F}(III.5) \right| + 2 \left| \mathcal{F}(III.6) \right|.
    \end{align*}
\end{proof}

\end{document}